\def\NZQ{\mathbb}               
\def\NN{{\NZQ N}}
\def\QQ{{\NZQ Q}}
\def\ZZ{{\NZQ Z}}
\def\RR{{\NZQ R}}
\def\CC{{\NZQ C}}
\def\PP{{\NZQ P}}
\newtheorem{Theorem}{Theorem}[section]
\newtheorem{Lemma}[Theorem]{Lemma}
\newtheorem{Proposition}[Theorem]{Proposition}
\newtheorem{Remark}[Theorem]{Remark}
\newtheorem{Definition}[Theorem]{Definition}
\let\epsilon\varepsilon
\let\phi=\varphi
\let\kappa=\varkappa
\begin{document}
\title{The Minkowski equality of big divisors}
\author{Steven Dale Cutkosky}

\thanks{Partially supported by NSF grant DMS-2054394.}

\address{Steven Dale Cutkosky, Department of Mathematics,
University of Missouri, Columbia, MO 65211, USA}
\email{cutkoskys@missouri.edu}

\begin{abstract}  We give conditions characterizing equality in the Minkowski inequality for big divisors on a projective variety. Our results draw on the extensive history of research on Minkowski inequalities in algebraic geometry.
\end{abstract}

\subjclass[2010]{14C20, 14C17, 14C40, 14G17}

\maketitle

\section{Introduction}   

Suppose that $X$ is a projective $d$-dimensional algebraic variety over a field $k$ and $D$ is an $\RR$-Cartier divisor on $X$.
Then the volume of $D$ is
$$
{\rm vol}(D)=\lim_{n\rightarrow \infty}\frac{\dim_k\Gamma(X,\mathcal O_X(nD))}{n^d/d!}.
$$
If $D$ is nef, then the volume of $D$ is the self intersection number ${\rm vol}(D)=(D^d)$. For an arbitrary $\RR$-Cartier divisor $D$,
$$
{\rm vol}(D)=\left\{\begin{array}{cl}
\langle D^d\rangle&\mbox{ if $D$ is pseudo effective}\\
0&\mbox{ otherwise.}
\end{array}\right.
$$
Here $\langle D^d\rangle$ is the positive intersection product. The positive intersection product $\langle D^d\rangle$ is the ordinary intersection product $(D^d)$ if $D$ is nef, but these products are different in general. More generally, given pseudo effective $\RR$-Cartier divisors $D_1,\ldots,D_p$ on $X$ with $p\le d$, there is a positive intersection product  $\langle D_1\cdot\ldots\cdot D_p\rangle$  which is a linear form on $N^1(\mathcal X)^{d-p}$, where $\mathcal X$ is the limit of all birational models of $X$. We have that 
$$
{\rm vol}(D)=\langle D^p\rangle =\langle D\rangle \cdot\ldots \cdot\langle D\rangle=\langle D\rangle^d.
$$
We denote the linear forms on $N^1(\mathcal X)^{d-p}$ by $L^{d-p}(\mathcal X)$. The theory of  intersection theory and volumes which is required for this paper is reviewed in Section \ref{PrelSect}.

Suppose that  $D_1$ and  $D_2$ are pseudo effective $\RR$-Cartier divisors on $X$. We have the Minkowski inequality
$$
{\rm vol}(D_1+D_2)^{\frac{1}{d}}\ge {\rm vol}(D_1)^{\frac{1}{d}}+{\rm vol}(D_2)^{\frac{1}{d}}
$$
which follows from Theorem \ref{Ineq+} below. Further, we have the following characterization of equality in the Minkowski inequality.

\begin{Theorem}\label{Theorem22+}  Let $X$ be a  $d$-dimensional projective variety over a field $k$. For any two big $\RR$-Cartier divisors $D_1$ and $D_2$ on $X$, 
\begin{equation}\label{Neweq20+}
{\rm vol}(D_1+D_2)^{\frac{1}{d}}\ge {\rm vol}(D_1)^{\frac{1}{d}}+{\rm vol}(D_2)^{\frac {1}{d}}
\end{equation}
with equality if and only if $\langle D_1\rangle $ and $\langle D_2\rangle$ are proportional in $L^{d-1}(\mathcal X)$. 
\end{Theorem}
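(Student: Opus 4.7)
The plan is as follows. First I will establish the Minkowski inequality (\ref{Neweq20+}) by expanding
$$
{\rm vol}(D_1+D_2)=\langle (D_1+D_2)^d\rangle = \sum_{i=0}^{d}\binom{d}{i}\langle D_1^i\cdot D_2^{d-i}\rangle,
$$
using multilinearity of the positive intersection product on pseudo-effective classes. I then apply the Khovanskii--Teissier style inequalities contained in Theorem \ref{Ineq+}, in the mixed form $\langle D_1^i\cdot D_2^{d-i}\rangle\ge {\rm vol}(D_1)^{i/d}\,{\rm vol}(D_2)^{(d-i)/d}$, and sum via the binomial theorem to recover $({\rm vol}(D_1)^{1/d}+{\rm vol}(D_2)^{1/d})^d$. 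This both proves (\ref{Neweq20+}) and shows that equality in the Minkowski inequality forces equality in each of the $d+1$ mixed inequalities. In particular, the index-type identity
$$
\langle D_1^{d-1}\cdot D_2\rangle^d = {\rm vol}(D_1)^{d-1}\,{\rm vol}(D_2)
$$
must hold, and this will drive the rest of the argument.

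Next I would deduce from this equality that $\langle D_1\rangle$ and $\langle D_2\rangle$ are proportional as elements of $L^{d-1}(\mathcal X)$. The natural strategy is to reduce to the classical nef case via Fujita approximation: for each $\epsilon>0$ I choose a common birational model $\pi:Y\to X$ and nef $\RR$-divisors $A_1,A_2$ on $Y$ with $\pi^* D_j-A_j$ pseudo-effective and $(A_j^d)\ge {\rm vol}(D_j)-\epsilon$. The equality on positive intersection products can then be translated into an approximate equality of ordinary intersection numbers of the nef classes $A_j$. Invoking the classical equality case of the Khovanskii--Teissier inequalities for nef divisors, I conclude that $A_1$ and $A_2$ are numerically proportional in $N^1(Y)$ up to an error that vanishes as $\epsilon\to 0$.

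The main obstacle, and where the serious work will lie, is in upgrading this approximate proportionality of Fujita approximants to an \emph{exact} proportionality of the limit forms $\langle D_j\rangle$ in the space $L^{d-1}(\mathcal X)$ attached to the full pro-variety $\mathcal X$. This demands continuity of the positive intersection product under Fujita approximation, together with a compactness or diagonal argument to extract a coherent limit constant. The formalism of intersection theory on the inverse system of birational models reviewed in Section \ref{PrelSect} will be indispensable here, and one must verify that the proportionality factor is intrinsic, i.e.\ independent of the approximating sequence. Finally, the converse direction (that proportionality in $L^{d-1}(\mathcal X)$ implies equality) should follow quickly by substituting the proportionality relation back into the multinomial expansion of $\langle(D_1+D_2)^d\rangle$.
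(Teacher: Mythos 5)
Your opening expansion is not quite right: the positive intersection product is \emph{not} multilinear on pseudo-effective classes, only homogeneous and super-additive in each variable (Lemma \ref{Lemma36}). Thus
$$
\langle (D_1+D_2)^d\rangle \;\ge\; \sum_{i=0}^{d}\binom{d}{i}\langle D_1^i\cdot D_2^{d-i}\rangle,
$$
not an equality. Since the inequality points the right way, your chain still produces (\ref{Neweq20+}), and when Minkowski equality holds you do get $\langle D_1^{d-1}\cdot D_2\rangle^d={\rm vol}(D_1)^{d-1}{\rm vol}(D_2)$ as claimed. So this misstatement does not sink the first step, but it should be corrected. More seriously, the same lack of multilinearity means the converse direction you dismiss in one sentence (``substitute the proportionality relation back into the multinomial expansion'') does not go through as stated; in the paper this direction is handled in the nonsingular case via the augmented base locus (Lemma \ref{Lemma60}) and the $\sigma$-decomposition, not by expanding.

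The real gap is exactly where you flag it: passing from approximate proportionality of Fujita approximants $A_{1,m},A_{2,m}$ to exact proportionality of $\langle D_1\rangle$ and $\langle D_2\rangle$ in $L^{d-1}(\mathcal X)$. Qualitative appeal to ``the equality case of Khovanskii--Teissier for nef divisors'' plus ``a compactness or diagonal argument'' will not do, because each $A_{j,m}$ only satisfies the index identity approximately, and the classical equality case gives nothing quantitative from an approximate equality. What makes the limit argument work in the paper is the \emph{Diskant inequality} for nef and big divisors: applied to each pair $(A_{1,m},A_{2,m})$ it bounds $(A_{1,m}^{d-1}\cdot A_{2,m})^{1/(d-1)}-s_m\,{\rm vol}(A_{2,m})^{1/(d-1)}$ in terms of the deficit in the index inequality, and one further needs careful control of the slopes $s_m$ relative to the slope of $D_1$ versus $D_2$ (this is where $\sigma$-decomposition and the pushforward to $N^1(X)$ come in). None of this machinery is invoked in your proposal. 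In addition, the paper does not pass directly from the single index identity to proportionality of general big classes; it first reduces to big and movable classes using $P_\sigma$, then reduces Minkowski equality itself to the index identity at every $t\in[0,1]$ via the derivative formula $\frac{d}{dt}{\rm vol}(D_1+tD_2)=d\,\langle(D_1+tD_2)^{d-1}\rangle\cdot D_2$ (Theorem \ref{Theorem17}), rather than via the binomial expansion. Finally, since Theorem \ref{Theorem22+} is over an arbitrary field, resolution of singularities is unavailable, and the paper descends from the nonsingular case to the general case using de Jong alterations and the compatibility $\phi_*N_\sigma(\phi^*D)=\deg(Y/X)N_\sigma(D)$; your sketch implicitly assumes nonsingular models exist. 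As written, the proposal identifies the shape of the argument but omits the Diskant input, the derivative formula, the $\sigma$-decomposition reduction, and the alteration step, all of which are essential.
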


In the case that $D_1$ and $D_2$ are nef and big, this is proven in \cite[Theorem 2.15]{BFJ} (over an algebraically closed field of characteristic zero) and in \cite[Theorem 6.13]{C} (over an arbitrary field). In this case of nef divisors, the condition that 
$\langle L_1\rangle $ and $\langle L_2\rangle$ are proportional in $L^{d-1}(\mathcal X)$ is just that $D_1$ and $D_2$ are proportional in $N^1(X)$. 

Theorem \ref{Theorem22+} is obtained in the case that $D_1$ and $D_2$ are big and movable and $k$ is an algebraically closed field of characteristic zero in \cite[Proposition 3.7]{LX2}. In this case the condition for equality is that $D_1$ and $D_2$ are proportional in $N^1(X)$.  Theorem \ref{Theorem22+} is established in the case that $D_1$ and $D_2$ are big $\RR$-Cartier divisors and $X$ is nonsingular, over an algebraically closed field $k$ of characteristic zero in \cite[Theorem 1.6]{LX2}.  In this case, the condition for equality is that the positive parts of the $\sigma$ decompositions of $D_1$ and $D_2$ are proportional; that is, $P_{\sigma}(D_1)$ and $P_{\sigma}(D_2)$ are proportional in $N^1(X)$.

In Section \ref{SecMink},  we modify the proof sketched in \cite{LX2} of  \cite[Proposition 3.7]{LX2} to be valid over an arbitrary field. Characteristic zero is required in the proof in \cite{LX2}  as the existence of resolution of singularities is assumed and an argument using the theory of multiplier ideals is used, which  requires characteristic zero as it relies on both resolution of singularities and Kodaira vanishing.

We will write
$$
s_i=\langle D_1^i\cdot D_2^{d-i}\rangle\mbox{ for $0\le i\le d$}. 
$$

We have the following generalization of the Khovanskii-Teissier inequalities to positive intersection numbers.

\begin{Theorem} (Minkowski Inequalities)\label{Ineq+} Suppose that $X$ is a complete algebraic variety of dimension $d$ over a field $k$ and $D_1$ and $D_2$ are pseudo effective $\RR$-Cartier divisors on $X$. Then
\begin{enumerate}
\item[1)] $s_i^2\ge s_{i+1}s_{i-1}$ for $1\le i\le d-1.$
\item[2)]  $s_is_{d-i}\ge s_0s_d$ for $1\le i\le d-1$.
\item[3)] $s_i^d\ge s_0^{d-i}s_d^i$ for $0\le i\le d$.
\item[4)] ${\rm vol}(D_1+D_2) \ge {\rm vol}(D_1)^{\frac{1}{d}}+{\rm vol}(D_2)^{\frac{1}{d}}$.
\end{enumerate}
\end{Theorem}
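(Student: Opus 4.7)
The plan is to reduce the four inequalities to their classical counterparts for \emph{nef} divisors on a birational model, exploiting the characterization of positive intersection products by Fujita approximations reviewed in Section \ref{PrelSect}. Concretely, for pseudo-effective $D_j$ the number $\langle D_1^i\cdot D_2^{d-i}\rangle$ is the supremum of ordinary intersection numbers $(A_1^i\cdot A_2^{d-i})$, where $A_1,A_2$ range over nef $\RR$-divisors on a proper birational model $\pi\colon Y\to X$ satisfying $A_j\le \pi^*D_j$; by passing to a common birational refinement we may always assume that $A_1$ and $A_2$ live on the same $Y$. The strategy is then to prove (1) by transferring the Khovanskii--Teissier inequality across this supremum, and to derive (2), (3), and (4) by formal manipulations.

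For (1), I would fix $\epsilon>0$ and, using the Fujita-approximation characterization, choose nef $A_1\le \pi^*D_1$ and $A_2\le \pi^*D_2$ on a single model $Y$ such that $(A_1^{i+1}\cdot A_2^{d-i-1})\ge s_{i+1}-\epsilon$ and $(A_1^{i-1}\cdot A_2^{d-i+1})\ge s_{i-1}-\epsilon$ hold simultaneously. On the $d$-dimensional complete variety $Y$, the classical Khovanskii--Teissier inequality for nef divisors gives
$$
(A_1^i\cdot A_2^{d-i})^2\ge (A_1^{i+1}\cdot A_2^{d-i-1})\,(A_1^{i-1}\cdot A_2^{d-i+1}).
$$
Since $(A_1^i\cdot A_2^{d-i})\le s_i$, letting $\epsilon\to 0$ yields $s_i^2\ge s_{i+1}s_{i-1}$.

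Inequalities (2) and (3) then fall out from the log-concavity established in (1): the estimate $\log s_i\ge \tfrac{d-i}{d}\log s_0+\tfrac{i}{d}\log s_d$ is (3), and adding it to the corresponding bound at $d-i$ gives (2). For (4), super-additivity of the positive intersection product yields $\langle (D_1+D_2)^d\rangle\ge \sum_{i=0}^d\binom{d}{i}s_i$, and applying (3) term by term gives
$$
{\rm vol}(D_1+D_2)\ge \sum_{i=0}^d\binom{d}{i}s_0^{(d-i)/d}s_d^{i/d}=\bigl(s_0^{1/d}+s_d^{1/d}\bigr)^d,
$$
from which the Minkowski inequality for volumes follows after taking $d$-th roots.

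The chief obstacle is the compatibility step in (1): one must verify that a \emph{single} pair of nef approximants $(A_1,A_2)$ on a common model $Y$ can be chosen to approach the three positive intersection products $s_{i-1}$, $s_i$, and $s_{i+1}$ simultaneously to within $\epsilon$. This relies on the monotonicity and common-refinement properties of Fujita approximations, together with the compatibility of positive intersection products with pullback along birational morphisms in $\mathcal X$; once these are in hand, the argument above propagates the Khovanskii--Teissier inequality from nef classes to positive intersection products, and the remaining inequalities follow formally.
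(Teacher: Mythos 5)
Your overall route matches the paper's: the paper disposes of statements 1)--3) in one sentence by citing \cite[Theorem 6.6]{C} (or \cite[Theorem 2.15]{BFJ} in characteristic zero), and then derives 4) from 3) together with super-additivity of the positive intersection product. Your derivation of 4) from 3) and super-additivity is exactly the paper's argument (and is correct: iterating super-additivity in each slot gives $\langle(D_1+D_2)^d\rangle\ge\sum_i\binom{d}{i}s_i$, and then 3) and the binomial theorem give $(s_0^{1/d}+s_d^{1/d})^d$). Your passage from 1) to 3) by log-concavity and from 3) to 2) by multiplying the bound at $i$ with the bound at $d-i$ is standard and correct (one should observe that if $s_0,s_d>0$ then 1) forces every $s_i>0$, and if $s_0s_d=0$ the right-hand side of 3) and of 2) vanishes, so nothing is lost). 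Thus 2)--4) are uncontroversial formal consequences of 1).

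The substantive content is 1), and here you have correctly located the crux but not actually closed it. The characterization of Proposition \ref{Prop35} expresses $\langle D_1^{i\pm1}\cdot D_2^{d\mp1-i}\rangle$ as a least upper bound over products $\beta_1\cdots\beta_p$ where each $\beta_j$ is an \emph{independently} chosen nef class below the corresponding $D_\ell$; it does not directly state that the supremum is computed by a single pair $(A_1,A_2)$ repeated in each slot, nor that one pair can approach $s_{i-1}$ and $s_{i+1}$ simultaneously. What actually makes this work (and is used repeatedly in the paper, e.g.\ in the proofs of Proposition \ref{Prop13} and Theorem \ref{PropNew60}) is the existence of a net of models $\psi_m:Y_m\to X$ and nef classes $A_{1,m}=\psi_m^*D_1-E_{1,m}$, $A_{2,m}=\psi_m^*D_2-E_{2,m}$ such that $A_{j,m}\to\langle D_j\rangle$ in $L^{d-1}(\mathcal X)$, together with the fact that \emph{all} positive intersection numbers $\langle D_1^a\cdot D_2^b\rangle$ are then computed as $\lim_m(A_{1,m}^a\cdot A_{2,m}^b)$. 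That limiting compatibility is precisely the content of the cited \cite[Theorem 6.6]{C} / \cite[Theorem 2.15]{BFJ} machinery, and your proof defers it to ``monotonicity and common-refinement properties of Fujita approximations'' without establishing it. In a fully detailed write-up this would need to be made precise (or, more efficiently, one should simply cite the result as the paper does); as written it remains a genuine, though correctly identified, gap.
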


Theorem \ref{Ineq+} follows from  \cite[Theorem 2.15]{BFJ} when $k$ has characteristic zero and from \cite[Theorem 6.6]{C} in general.
When $D_1$ and $D_2$ are nef, the inequalities of Theorem \ref{Ineq+} are proven by Khovanskii  and Teissier \cite{T1}, \cite{T2}, \cite[Example 1.6.4]{L}.  In the case that $D_1$ and $D_2$ are nef, we have that $s_i=\langle D_1^i\cdot D_2^{d-i}\rangle=(D_1^i\cdot D_2^{d-i})$ are the ordinary intersection products.

We have the following characterization of equality in these inequalities. 

\begin{Theorem} (Minkowski equalities)\label{Minkeq+} Suppose that $X$ is a projective algebraic variety of dimension $d$ over a field $k$ of characteristic zero, and $D_1$ and $D_2$ are big $\RR$-Cartier divisors on $X$.  Then the following are equivalent:
\begin{enumerate}
\item[1)] $s_i^2= s_{i+1}s_{i-1}$ for $1\le i\le d-1.$
\item[2)]  $s_is_{d-i}= s_0s_d$ for $1\le i\le d-1$.
\item[3)] $s_i^d= s_0^{d-i}s_d^i$ for $0\le i\le d$.
\item[4)] $s_{d-1}^d=s_0s_d^{d-1}$.
\item[5)] ${\rm vol}(D_1+D_2) = {\rm vol}(D_1)^{\frac{1}{d}}+{\rm vol}(D_2)^{\frac{1}{d}}$.
 \item[6)] $\langle D_1\rangle$ is proportional to $\langle D_2\rangle$ in $L^{d-1}(\mathcal X)$.
\end{enumerate}
\end{Theorem}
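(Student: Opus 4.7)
The plan is to close the implication cycle $(6)\Rightarrow(1)\Rightarrow\{(2),(3),(4)\}\Rightarrow(1)\Rightarrow(5)\Rightarrow(6)$, the final implication being furnished directly by Theorem \ref{Theorem22+}. The easy direction $(6)\Rightarrow(1),(2),(3),(4)$ is a single computation: if $\langle D_1\rangle=\lambda\langle D_2\rangle$ in $L^{d-1}(\mathcal X)$, then pairing this identity with $D_1^{i-1}\cdot D_2^{d-i}$ gives $s_i=\lambda s_{i-1}$, hence $s_i=\lambda^i s_0$, and all four algebraic identities follow by direct substitution.

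For the equivalences $(1)\Leftrightarrow(2)\Leftrightarrow(3)\Leftrightarrow(4)$, the bigness of $D_1,D_2$ ensures $s_i>0$ for every $i$, so I may work with the ratios $r_i:=s_i/s_{i-1}$ for $1\le i\le d$. The log-concavity of $(s_i)$ provided by Theorem \ref{Ineq+}(1) is precisely $r_1\ge r_2\ge\cdots\ge r_d$. Condition $(1)$ is literally $r_i=r_{i+1}$ for all $i$; condition $(3)$ reads $s_i=s_0^{(d-i)/d}s_d^{i/d}$, equivalent to the $r_i$ forming a constant geometric ratio $(s_d/s_0)^{1/d}$; condition $(4)$, after substituting $s_{d-1}=s_0\prod_{j=1}^{d-1}r_j$ and $s_d=s_0\prod_{j=1}^{d}r_j$, reduces to $\prod_{j=1}^{d-1}r_j=r_d^{d-1}$, and since $r_j\ge r_d$ for $j\le d-1$, this forces $r_1=\cdots=r_d$; condition $(2)$ at $i=1$ reduces to $r_1=r_d$, again forcing equality throughout by monotonicity. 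The reverse implications are all trivial from $s_i=\lambda^i s_0$.

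The heart of the argument is $(1)\Rightarrow(5)$. Under $(1)$, write $\lambda:=s_1/s_0$, so that $s_i=\lambda^i s_0$ and ${\rm vol}(D_1)^{1/d}=s_d^{1/d}=\lambda s_0^{1/d}$. Define $\phi(t):={\rm vol}(D_2+tD_1)^{1/d}$ for $t\ge 0$; since $D_2$ is big, $D_2+tD_1$ remains big for all such $t$. The differentiability of ${\rm vol}$ at big classes (reviewed in Section \ref{PrelSect}) yields
\[
\frac{d}{dt}\bigg|_{t=0}{\rm vol}(D_2+tD_1)=d\,\langle D_2^{d-1}\rangle\cdot D_1=d\, s_1,
\]
so $\phi'(0)=s_1/s_0^{(d-1)/d}=\lambda s_0^{1/d}={\rm vol}(D_1)^{1/d}$. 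Applying Theorem \ref{Ineq+}(4) together with the $1$-homogeneity of ${\rm vol}^{1/d}$ to the identity $\alpha(D_2+t_1D_1)+(1-\alpha)(D_2+t_2D_1)=D_2+(\alpha t_1+(1-\alpha)t_2)D_1$ shows that $\phi$ is concave on $[0,\infty)$. Hence $\phi(1)\le\phi(0)+\phi'(0)={\rm vol}(D_2)^{1/d}+{\rm vol}(D_1)^{1/d}$, which combined with the Minkowski lower bound gives equality, namely $(5)$. Finally, Theorem \ref{Theorem22+} closes the cycle with $(5)\Rightarrow(6)$. The main technical obstacle is this last step $(1)\Rightarrow(5)$: one must lift the discrete algebraic equalities on the sequence $s_i$ to an honest identity involving ${\rm vol}(D_1+D_2)$, and the essential inputs are the differentiability of ${\rm vol}$ at big divisors and the concavity of ${\rm vol}^{1/d}$.
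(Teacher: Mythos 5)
Your proposed implication cycle is almost entirely sound, and your algebraic equivalences $(1)\Leftrightarrow(2)\Leftrightarrow(3)\Leftrightarrow(4)$ via the log-concavity ratios $r_i=s_i/s_{i-1}$ are correct. The gap is exactly where you flagged the "heart of the argument": the step $(1)\Rightarrow(5)$.

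The flaw is the identity
$$
\frac{d}{dt}\bigg|_{t=0}{\rm vol}(D_2+tD_1)=d\,\langle D_2^{d-1}\rangle\cdot D_1 \ \stackrel{?}{=}\ d\,s_1 = d\,\langle D_1\cdot D_2^{d-1}\rangle.
$$
The first equality is Theorem \ref{Theorem17}, but the second is false in general. The quantity $\langle D_2^{d-1}\rangle\cdot D_1$ is the linear form $\langle D_2^{d-1}\rangle\in L^1(\mathcal X)$ evaluated against the full class $D_1=P_\sigma(D_1)+N_\sigma(D_1)$, whereas $s_1=\langle D_1\cdot D_2^{d-1}\rangle$ is the $d$-fold positive intersection product, which effectively only "sees" $P_\sigma(D_1)$. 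The paper itself records the inequality explicitly in the proof of Proposition \ref{Prop13}:
$$
\langle L_1^{d-1}\rangle\cdot L_2\ \ge\ \langle L_1^{d-1}\cdot L_2\rangle,
$$
and it is not an equality (the difference involves $\langle L_1^{d-1}\rangle\cdot N_\sigma(L_2)$, which need not vanish for a big but non-movable $L_2$). Consequently, what you actually derive from $(1)$ is only $\phi'(0)\ge{\rm vol}(D_1)^{1/d}$, and the concavity estimate $\phi(1)\le\phi(0)+\phi'(0)$ then gives no upper bound matching the Minkowski lower bound. The argument collapses at that point.

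The paper routes around this by not attempting $(1)\Rightarrow(5)$ at all. It shows $(1)\Rightarrow(4)$ by the telescoping identity $s_{d-1}/s_0=\prod_{j=1}^{d-1}(s_j/s_{j-1})=(s_d/s_{d-1})^{d-1}$, then proves $(4)\Rightarrow(6)$ via Proposition \ref{Prop13*}, which analyzes the equality case of $\langle D_1^{d-1}\cdot D_2\rangle\ge{\rm vol}(D_1)^{(d-1)/d}{\rm vol}(D_2)^{1/d}$ using the Diskant inequality (Theorem \ref{PropNew60+}), and finally $(6)\Rightarrow(5)$ is the easy direction of Theorem \ref{Theorem22+}. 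The essential point is that condition $(4)$ involves the genuine positive intersection number $s_{d-1}=\langle D_1^{d-1}\cdot D_2\rangle$, which is exactly the quantity controlled by the Diskant inequality; your derivative $\langle D_2^{d-1}\rangle\cdot D_1$ is a different (generally larger) quantity. To repair your proof you would need the Diskant input in some form — the differentiability-plus-concavity argument alone is not enough to extract the equality case here.
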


Theorem \ref{Minkeq+} is valid over any field $k$ when $\dim X\le 3$, since resolution of singularities is true in these dimensions.
When $D_1$ and $D_2$ are nef and big, then Theorem \ref{Minkeq+} is proven  in \cite[Theorem 2.15]{BFJ} when $k$ has characteristic zero and in  \cite[Theorem 6.13]{C} for arbitrary $k$. When $D_1$ and $D_2$ are nef and big, the condition
6) of Theorem \ref{Minkeq+} is just that $D_1$ and $D_2$ are proportional in $N^1(X)$.

The proof of Theorem \ref{Minkeq+} relies on the following Diskant inequality for big divisors. 

 Suppose that $X$ is a projective variety and $D_1$ and $D_2$ are $\RR$-Cartier divisors on $X$. The slope $s(D_1,D_2)$ of $D_2$ with respect to $D_1$ is the smallest real number $s=s(D_1,D_2)$ such that $\langle D_1\rangle \ge s\langle D_2\rangle$.
 
\begin{Theorem}\label{PropNew60+}(Diskant inequality for big divisors) 
Suppose that $X$ is a  projective $d$-dimensional variety over a field $k$ of characteristic zero and $D_1,D_2$
 are big  $\RR$-Cartier divisors on $X$.  Then 
 \begin{equation}
\langle D_1^{d-1} \cdot D_2\rangle ^{\frac{d}{d-1}}-{\rm vol}(D_1){\rm vol}(D_2)^{\frac{1}{d-1}}
\ge [\langle D_1^{d-1}\cdot D_2\rangle^{\frac{1}{d-1}}-s(D_1,D_2){\rm vol}(D_2)^{\frac{1}{d-1}}]^d.
\end{equation} 
 \end{Theorem}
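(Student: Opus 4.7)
The plan is to reduce the case of big $\RR$-Cartier divisors to the Diskant inequality for nef and big divisors, which in characteristic zero is established in \cite[Theorem 2.18]{BFJ}, and in arbitrary characteristic is available through the framework of \cite{C}. The reduction is effected by simultaneous Fujita-type approximations, whose existence is the reason the characteristic zero hypothesis appears: resolution of singularities is required to produce the approximating smooth birational models in full generality.

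Concretely, I would first fix $\epsilon > 0$ and produce a projective birational morphism $\mu_\epsilon : Y_\epsilon \to X$ from a smooth variety $Y_\epsilon$ together with a simultaneous Fujita approximation $\mu_\epsilon^* D_i = A_i^\epsilon + E_i^\epsilon$ for $i = 1,2$, where each $A_i^\epsilon$ is nef and big, each $E_i^\epsilon$ is effective, and every mixed intersection number $((A_1^\epsilon)^j \cdot (A_2^\epsilon)^{d-j})$ is within $\epsilon$ of $\langle D_1^j \cdot D_2^{d-j}\rangle$ for $0 \le j \le d$. The availability of such joint approximations, with convergence of all mixed products, follows from the formalism of positive intersection products reviewed in Section~\ref{PrelSect}. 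Next I would apply the known Diskant inequality for the nef and big pair $A_1^\epsilon, A_2^\epsilon$ on $Y_\epsilon$, namely
\[
((A_1^\epsilon)^{d-1} \cdot A_2^\epsilon)^{\frac{d}{d-1}} - ((A_1^\epsilon)^d)((A_2^\epsilon)^d)^{\frac{1}{d-1}} \;\ge\; \bigl[((A_1^\epsilon)^{d-1} \cdot A_2^\epsilon)^{\frac{1}{d-1}} - s(A_1^\epsilon, A_2^\epsilon)((A_2^\epsilon)^d)^{\frac{1}{d-1}}\bigr]^d,
\]
and let $\epsilon \to 0$. The left-hand side converges to $\langle D_1^{d-1} \cdot D_2\rangle^{d/(d-1)} - \mathrm{vol}(D_1)\mathrm{vol}(D_2)^{1/(d-1)}$ by the approximation statement. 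Since the bracketed quantity $[a^{1/(d-1)} - t b^{1/(d-1)}]^d$ is monotonically decreasing in $t$ on the region where the bracket is positive, it is enough to prove the semicontinuity $\limsup_{\epsilon \to 0} s(A_1^\epsilon, A_2^\epsilon) \le s(D_1, D_2)$ in order to pass the right-hand side to the limit in the correct direction.

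I expect this slope semicontinuity to be the main obstacle. The slope $s(A_1^\epsilon, A_2^\epsilon)$ is defined in terms of numerical classes of nef divisors on $Y_\epsilon$, while $s(D_1, D_2)$ is defined in terms of the positive intersection products $\langle D_i\rangle$ regarded as elements of $L^{d-1}(\mathcal X)$. The compatibility I need is that each witness $[A_1^\epsilon] \ge t \,[A_2^\epsilon]$ on $Y_\epsilon$ propagates, via the direct limit defining $\mathcal X$, to a corresponding relation $\langle D_1\rangle \ge t\langle D_2\rangle$ in $L^{d-1}(\mathcal X)$, and conversely that no relation $\langle D_1\rangle \ge t\langle D_2\rangle$ beyond the true slope can survive the approximation. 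This is precisely the point where the multiplier-ideal arguments of \cite{LX2} are invoked in characteristic zero: they allow one to choose the Fujita approximations $A_i^\epsilon$ compatibly enough that the slopes $s(A_1^\epsilon, A_2^\epsilon)$ are controlled in the limit by the slope of the original big divisors $D_1, D_2$. Once this semicontinuity is in hand, the Diskant inequality for big divisors follows by taking $\epsilon \to 0$ in the chain above.
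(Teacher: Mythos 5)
Your overall plan matches the paper's: reduce to the nef-and-big Diskant inequality via Fujita-type approximations $A_{i,m}$ of the positive intersection classes $\langle D_i\rangle$ on birational models, pass to the limit, and recognize the slope comparison $\limsup_m s(A_{1,m},A_{2,m}) \le s(D_1,D_2)$ as the essential technical point. You also correctly locate the role of resolution of singularities: characteristic zero is used only to reduce to the case that $X$ is nonsingular (so that $\sigma$-decompositions are available and the slope $s(D_1,D_2)$ can be characterized by the pseudo-effectivity of $P_\sigma(L_1)-sP_\sigma(L_2)$ in $N^1(X)$, as in equation (\ref{eqZ2})).

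However, the proposal has a genuine gap at exactly the step you flag as the main obstacle, and the route you sketch for closing it is not the one the paper takes --- it is, in fact, the route the paper is deliberately engineered to avoid. You say the slope semicontinuity should come from ``the multiplier-ideal arguments of \cite{LX2}'', but Section \ref{SecMink} of the paper explicitly reworks \cite{LX2} precisely to eliminate multiplier ideals (and Kodaira vanishing), and the actual proof of the slope estimate is elementary and topological: one uses Lemma \ref{Lemma200} to identify $\pi_X(\langle L_i\rangle)=P_\sigma(L_i)$, notes that $\psi_{m*}(A_{i,m})\to P_\sigma(L_i)$ in $N^1(X)$, and then exploits the closedness of $\mathrm{Psef}(X)$: since $P_\sigma(L_1)-(s+\epsilon)P_\sigma(L_2)$ lies outside the closed cone, a whole open ball around it does too, so for $m\gg 0$ the pushforward $\psi_{m*}(A_{1,m}-(s+\epsilon)A_{2,m})$ falls in that ball and is not pseudo-effective, whence $A_{1,m}-(s+\epsilon)A_{2,m}$ itself cannot be pseudo-effective on $Y_m$. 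This yields $s_m<s+\epsilon$ directly, with no birational or multiplier-ideal input. As written, your proof is incomplete because the key lemma is gestured at rather than established, and the toolkit you propose to use is one the paper has gone out of its way to replace with a simpler argument. You should also record the nonnegativity check $s_m^d\le \mathrm{vol}(A_{1,m})/\mathrm{vol}(A_{2,m})$ (which follows from Lemma \ref{Lemma22} together with the Khovanskii--Teissier inequality for the nef approximants), since this is what lets you raise both sides of the limiting inequality to the $d$-th power in the correct direction.
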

 
 The Diskant inequality is proven for nef and big divisors in \cite[Theorem G]{BFJ} in characteristic zero and in  \cite[Theorem 6.9]{C} for nef and big divisors over an arbitrary field. In the case that $D_1$ and $D_2$ are nef and big, the condition  that $\langle D_1\rangle - s \langle D_2\rangle$ is pseudo effective in $L^{d-1}(\mathcal X)$ is that $D_1-sD_2$ is pseudo effective in $N^1(X)$. The Diskant inequality is proven when $D_1$ and $D_2$ are big and movable divisors and $X$ is a projective variety over an algebraically closed field of characteristic zero in \cite[Proposition 3.3, Remark 3.4]{LX2}.  Theorem \ref{PropNew60+} is a consequence of   \cite[Theorem 3.6]{DF}.

Generalizing Teissier \cite{T1}, we 
 define the inradius of $\alpha$ with respect to $\beta$ as
$$
r(\alpha;\beta)=s(\alpha,\beta)
$$
and the outradius of $\alpha$ with respect to $\beta$ as
$$
R(\alpha;\beta)=\frac{1}{s(\beta,\alpha)}.
$$

We deduce the following consequence of the Diskant inequality.

\begin{Theorem}\label{TheoremH+} Suppose that $X$ is a $d$-dimensional projective variety over a field $k$ of characteristic zero and $\alpha,\beta$ are big $\RR$-Cartier divisors on $X$. Then
\begin{equation}
\frac{s_{d-1}^{\frac{1}{d-1}}-(s_{d-1}^{\frac{d}{d-1}}-s_0^{\frac{1}{d-1}}s_d)^{\frac{1}{d}}}{s_0^{\frac{1}{d-1}}}
\le r(\alpha;\beta)\le \frac{s_d}{s_{d-1}}\le\frac{s_1}{s_0}\le R(\alpha;\beta)\le 
\frac{s_d^{\frac{1}{d-1}}}{s_1^{\frac{1}{d-1}}-(s_1^{\frac{d}{d-1}}-s_d^{\frac{1}{d-1}}s_0)^{\frac{1}{d}}}.
\end{equation}
\end{Theorem}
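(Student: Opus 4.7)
The plan is to derive the chain of five inequalities from two applications of the Diskant inequality (Theorem \ref{PropNew60+}), the defining property of the slope, and the log-concavity part of Theorem \ref{Ineq+}. First I would record that all the numbers $s_i=\langle\alpha^i\cdot\beta^{d-i}\rangle$ are strictly positive: $s_0={\rm vol}(\beta)$ and $s_d={\rm vol}(\alpha)$ are positive because $\alpha$ and $\beta$ are big, and Theorem \ref{Ineq+}(3) then forces $s_i^d\ge s_0^{d-i}s_d^i>0$ for $1\le i\le d-1$, so every denominator appearing in the statement is positive.

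For the three middle inequalities I would argue directly from the definition of slope. Since $\langle\alpha\rangle-r(\alpha;\beta)\langle\beta\rangle$ is pseudo-effective in $L^{d-1}(\mathcal X)$, intersecting with the pseudo-effective class $\langle\alpha\rangle^{d-1}$ gives $s_d-r(\alpha;\beta)s_{d-1}\ge0$, whence $r(\alpha;\beta)\le s_d/s_{d-1}$. Symmetrically, intersecting $\langle\beta\rangle-s(\beta,\alpha)\langle\alpha\rangle\ge0$ with $\langle\beta\rangle^{d-1}$ yields $s_0\ge s(\beta,\alpha)s_1$, so $s_1/s_0\le1/s(\beta,\alpha)=R(\alpha;\beta)$. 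Finally, $s_i^2\ge s_{i-1}s_{i+1}$ from Theorem \ref{Ineq+}(1) shows that the ratios $s_i/s_{i-1}$ are non-increasing in $i$, giving $s_d/s_{d-1}\le s_1/s_0$.

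For the two outer inequalities I would invoke Diskant. Applied to $(\alpha,\beta)$, Theorem \ref{PropNew60+} reads
\[
s_{d-1}^{d/(d-1)}-s_d\,s_0^{1/(d-1)}\ge\bigl[s_{d-1}^{1/(d-1)}-s(\alpha,\beta)\,s_0^{1/(d-1)}\bigr]^d.
\]
Theorem \ref{Ineq+}(3) with $i=d-1$ gives $s_{d-1}^d\ge s_0s_d^{d-1}$, i.e.\ the left side is non-negative, so taking $d$-th roots produces
\[
s_{d-1}^{1/(d-1)}-s(\alpha,\beta)\,s_0^{1/(d-1)}\le\bigl(s_{d-1}^{d/(d-1)}-s_d\,s_0^{1/(d-1)}\bigr)^{1/d},
\]
and isolating $s(\alpha,\beta)=r(\alpha;\beta)$ yields the leftmost inequality of the statement. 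For the rightmost inequality I would apply the same Diskant inequality to the reversed pair $(\beta,\alpha)$: this swaps $s_i\leftrightarrow s_{d-i}$ and produces a lower bound on $s(\beta,\alpha)$ of the same shape; since $s(\beta,\alpha)>0$, inverting yields the claimed upper bound on $R(\alpha;\beta)=1/s(\beta,\alpha)$.

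The only technical nuisance is the $d$-th root extraction, because a priori $s_{d-1}^{1/(d-1)}-s(\alpha,\beta)\,s_0^{1/(d-1)}$ could be negative. This is harmless: the implication ``$A\ge B^d$ with $A\ge0$'' always gives $B\le A^{1/d}$, trivially when $B<0$ and by monotonicity of the $d$-th root when $B\ge0$, so the rearrangement is unconditional. Positivity of the denominator in the outermost bounds reduces to $s_0s_d^{1/(d-1)}>0$, which we already have. Nothing else beyond bookkeeping with the exponents is required.
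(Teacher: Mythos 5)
Your argument is correct and follows essentially the same route as the paper's (which factors it through Theorems \ref{TheoremG} and \ref{TheoremH}): the middle three bounds come from the defining property of the slope together with the Khovanskii--Teissier inequalities of Theorem \ref{Ineq+}, and the two outer bounds come from the Diskant inequality applied to the pair $(\alpha,\beta)$ and to the reversed pair $(\beta,\alpha)$, with inversion giving the upper bound on $R(\alpha;\beta)=1/s(\beta,\alpha)$. One small notational caveat: the phrase ``intersecting with the pseudo-effective class $\langle\alpha\rangle^{d-1}$'' does not literally parse, since $\langle\alpha\rangle$ already lives in $L^{d-1}(\mathcal X)$ and has no $(d-1)$st power there; what is actually needed, and what the paper invokes via Lemma \ref{Lemma36}, is monotonicity of the positive intersection product, which converts $\langle\alpha\rangle\ge s\langle\beta\rangle$ into $\langle\alpha^d\rangle\ge s\langle\alpha^{d-1}\cdot\beta\rangle$ (and symmetrically $\langle\beta^d\rangle\ge s(\beta,\alpha)\langle\beta^{d-1}\cdot\alpha\rangle$); with that replacement the rest of your write-up goes through as stated.
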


This gives a solution to \cite[Problem B]{T1} for big $\RR$-Cartier divisors. The inequalities of Theorem \ref{TheoremH+} are proven by Teissier in \cite[Corollary 3.2.1]{T1} for divisors on surfaces satisfying some conditions. 
In the case that $D_1$ and $D_2$ are nef and big on a projective variety over a field of characteristic zero, Theorem \ref{TheoremH+} follows from the Diskant inequality \cite[Theorem F]{BFJ}. In the case that $D_1$ and $D_2$ are nef and big on a projective variety over an arbitrary field, Theorem \ref{TheoremH+} is proven in \cite[Theorem 6.11]{C}, as a consequence of the Diskant inequality \cite[Theorem 6.9]{C} for nef divisors. 






\section{Preliminaries}\label{PrelSect}




In this section we review some properties of cycles and intersection theory on projective varieties over an arbitrary field. 

\subsection{Codimension 1 cycles} 
To establish notation we give a quick review of  some material from \cite{Kl}, \cite[Chapter 2]{F} and \cite[Chapter 1]{L}. Although the ongoing assumption in \cite{L} is that $k=\CC$, this assumption is not needed in the material reviewed in this subsection.

Let $X$ be a $d$-dimensional projective variety over a field $k$.
The group of Cartier divisors on $X$ is denoted by ${\rm Div}(X)$. There is a natural homomorphism from  ${\rm Div}(X)$ to the $(k-1)$-cycles (Weil divisors) $Z_{k-1}(X)$ of $X$ written as $D\mapsto [D]$. Further, there is a natural homomorphism
${\rm Div}(X)\rightarrow \mbox{Pic}(X)$ given by $D\mapsto \mathcal O_X(D)$. 

Denote numerical equivalence on ${\rm Div}(X)$ by $\equiv$. For $D$ a Cartier divisor, $D\equiv 0$ if and only if $(C\cdot D)_X:=\mbox{deg}(\mathcal O_X(D)\otimes\mathcal O_C)=0$ for all integral curves $C$ on $X$.

The group  $N^1(X)_{\ZZ}={\rm Div}(X)/\equiv$ and  $N^1(X)=N_1(X)_{\ZZ}\otimes \RR$. An element of  ${\rm Div}(X)\otimes\mathcal \QQ$ will be called a $\QQ$-Cartier divisor and an element of ${\rm Div}(X)\otimes \RR$  will be called an $\RR$-Cartier divisor. In an effort to keep  notation as simple as possible, the class in $N^1(X)$ of an $\RR$-Cartier divisor $D$ will often be denoted by $D$.

 We will also denote  the numerical equivalence on $Z_{d-1}(X)$ defined on  page 374 \cite{F} by $\equiv$. Let  $N_{d-1}(X)_{\ZZ}=Z_{d-1}(X)/\equiv$ and $N_{d-1}(X)=N_{d-1}(X)_{\ZZ}\otimes_{\ZZ}\RR$.
 There is a natural homomorphism $N^1(X)\rightarrow N_{d-1}(X)$ which is induced by associating to the class  of a $\RR$-Cartier divisor $D$ the class in $N_{d-1}(X)$ of its associated Weil divisor $[D]$ \cite[Section 2.1]{F}.   If $f:Y\rightarrow X$ is a morphism,
 the cycle map $f_*:Z_{d-1}(Y)\rightarrow Z_{d-1}(X)$ of \cite[Section 1.4]{F} induces a homomorphism $f_*:N_{d-1}(Y)\rightarrow N_{d-1}(X)$ (\cite[Example 19.1.6]{F}).

   Suppose that $f:Y\rightarrow X$ is a dominant morphism where $Y$ is projective variety. Then $f^*:{\rm Div}(X)\rightarrow 
  {\rm Div}(Y)$ is defined by taking local equations of $D$ on $X$ as local equations of $f^*(D)$ on $Y$. There is an induced homomorphism $f^*:N^1(X)\rightarrow N^1(Y)$ which is an injection by \cite[Lemma 1]{Kl}.
  By \cite[Proposition 2.3]{F}, we have that if $D$ is an $\RR$-Cartier divisor on $X$, then 
  \begin{equation}\label{eq41}
  f_*{[f^* D]}=\mbox{deg}(X'/X) D
  \end{equation} 
 where $\mbox{deg}(X'/X)$ is the index of the function field of $X$ in the function field of $X'$.
 
 In this subsection, we will use the notation for intersection numbers of \cite[Definition 2.4.2]{F}.
 
 The first statement of the following lemma follows immediately from \cite{M} or \cite[Corollary XIII.7.4]{Kl2} if $k$ is algebraically closed. The second statement is \cite[Example 19.1.5]{F}.
   
 \begin{Lemma}\label{Lemma55} Let $X$ be a $d$-dimensional projective variety over a field $k$. Then: 
 \begin{enumerate}
 \item[1)] The homomorphism $N^1(X)\rightarrow N_{d-1}(X)$ is an injection.
 \item[2)] If $X$ is nonsingular, then the homomorphism $N^1(X)\rightarrow N_{d-1}(X)$ is an isomorphism.
 \end{enumerate}
 \end{Lemma}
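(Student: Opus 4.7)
My plan is to treat the two parts separately: reduce part (1) to the algebraically closed case, where the cited theorems of Matsusaka and Kleiman apply, and then deduce part (2) from (1) using local factoriality of nonsingular varieties.

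For (1), let $D$ be an $\RR$-Cartier divisor on $X$ whose image $[D] \in N_{d-1}(X)$ vanishes; I must show $D \equiv 0$ in $N^1(X)$, i.e., $(C \cdot D) = 0$ for every integral curve $C$ on $X$. Let $\bar k$ be an algebraic closure of $k$ and let $\pi : \bar X = X \times_k \bar k \to X$ be the base change morphism, passing to an irreducible component of $\bar X_{\text{red}}$ if $X$ is not geometrically integral. Intersection numbers of Cartier divisors are preserved under extension of scalars, so the pullback $\pi^* : N^1(X) \to N^1(\bar X)$ is injective (check directly by lifting an integral curve $C \subset X$ to a curve $\bar C \subset \bar X$ dominating $C$ and applying the degree formula). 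The same compatibility shows that the diagram
\[
\begin{CD}
N^1(X) @>>> N_{d-1}(X) \\
@VV{\pi^*}V @VV{\pi^*}V \\
N^1(\bar X) @>>> N_{d-1}(\bar X)
\end{CD}
\]
commutes. Hence the vanishing of $[D]$ in $N_{d-1}(X)$ forces the vanishing of $\pi^*[D]$ in $N_{d-1}(\bar X)$. By the algebraically closed case (\cite{M} or \cite[Corollary XIII.7.4]{Kl2}), $\pi^* D \equiv 0$ in $N^1(\bar X)$, and by injectivity of $\pi^*$ it follows that $D \equiv 0$ in $N^1(X)$.

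For (2), since $X$ is nonsingular every local ring $\mathcal O_{X,x}$ is regular, hence a UFD by Auslander--Buchsbaum. Consequently every prime Weil divisor is locally principal, so every Weil divisor is Cartier and the natural map $\mathrm{Div}(X) \to Z_{d-1}(X)$ is surjective. Passing to numerical equivalence classes and tensoring with $\RR$, the induced map $N^1(X) \to N_{d-1}(X)$ is surjective. Combined with the injectivity from (1), it is an isomorphism.

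The main technical obstacle lies in the base-change step of (1): one must carefully verify injectivity of $\pi^* : N^1(X) \to N^1(\bar X)$ and handle the possibility that $\bar X$ is neither integral nor reduced (when $X$ fails to be geometrically integral or geometrically reduced). Once one has set up the correct target -- a suitable irreducible component of $\bar X_{\text{red}}$, or a direct sum over components -- everything reduces by a diagram chase to the known statement over an algebraically closed field.
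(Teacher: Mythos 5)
Part (2) of your argument is fine and matches the paper's citation of \cite[Example 19.1.5]{F}: nonsingularity gives local factoriality, so Cartier equals Weil, and surjectivity combines with (1) to give the isomorphism.

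For part (1), however, there is a genuine gap in the diagram-chase step. You assert that the vanishing of $[D]$ in $N_{d-1}(X)$ forces the vanishing of $\pi^*[D]$ in $N_{d-1}(\bar X)$, treating this as a formal consequence of commutativity of a square. For that to even make sense you need a well-defined homomorphism $\pi^*\colon N_{d-1}(X)\rightarrow N_{d-1}(\bar X)$, i.e.\ you must know that pullback of $(d-1)$-cycles respects numerical equivalence. But numerical equivalence in $Z_{d-1}(\bar X)$ is tested against \emph{all} line bundles on $\bar X$, not merely those pulled back from $X$, and in general $\operatorname{Pic}(\bar X)$ is strictly larger than $\pi^*\operatorname{Pic}(X)$. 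Thus $[D]\equiv 0$ in $N_{d-1}(X)$ does not formally imply $[\pi^*D]\equiv 0$ in $N_{d-1}(\bar X)$; establishing this implication is essentially equivalent to the lemma over $\bar k$, so the step is circular. (Note also that $\pi_*$, not $\pi^*$, is what \cite[Example 19.1.6]{F} shows descends to $N_{d-1}$.) A smaller, separate issue you half-acknowledge: since $\bar k/k$ need not be finite, $\pi\colon \bar X\rightarrow X$ is not of finite type, so one must first descend to a finite subextension $k'/k$ that is a field of definition before invoking degree formulas.

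The paper avoids the $N_{d-1}$-pullback problem entirely by exploiting the precise form of the Matsusaka--Kleiman criterion: over an algebraically closed field, a Cartier divisor $E$ on a projective $d$-fold is numerically trivial as soon as $(E\cdot H^{d-1})=0$ and $(E^2\cdot H^{d-2})=0$ for a single ample $H$. Both of these numbers are consequences of $[D]\equiv 0$ in $N_{d-1}(X)$, and they involve only intersection numbers of \emph{Cartier} divisors, which behave cleanly under pullback along a dominant generically finite morphism (verified in the paper via the factorization through $X'$ over $k'$, \cite[Example 18.3.6]{F}, \cite[Proposition 2.3]{F}, and flatness of $\bar k/k'$). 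After applying the criterion on $\bar X$ to conclude $\psi^*D\equiv 0$ there, the projection formula with a lifted curve $\bar C$ yields $(D\cdot C)_X=0$, a contradiction. You should replace your ``commutative square'' step with this explicit two-intersection-number verification.
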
 
 
 \begin{proof}  Suppose that $N^1(X)\rightarrow N_{d-1}(X)$ is not injective.
 The homomorphism $N^1(X)\rightarrow N_{d-1}(X)$ is obtained by tensoring the natural map $N_1(X)_{\ZZ}\otimes_{\ZZ}\QQ\rightarrow N_{d-1}(X)_{\ZZ}\otimes_{\ZZ}\QQ$ with $\RR$ over $\QQ$. Thus $N_1(X)_{\ZZ}\otimes_{\ZZ}\QQ\rightarrow N_{d-1}(X)_{\ZZ}\otimes_{\ZZ}\QQ$ is not injective, and so there exists a Cartier divisor $D$ on $X$ 
 such  that the Weil divisor $[D]$ associated to $D$ is numerically equivalent to zero (its class is zero in $N_{d-1}(X)$) but the class of  $D$ is not zero  in $N^1(X)$. Thus there exists  
   an integral curve $C$ on $X$ such that 
   \begin{equation}\label{eq59}
   (C\cdot D)_X\ne 0.
   \end{equation}
    Let $\overline k$ be an algebraic closure of $k$. There exists an integral subscheme $\overline X$  of $X\otimes_k\overline k$ such that $\overline X$ dominates $X$. Thus $\overline X$ is a projective variety over $\overline k$. Let $\psi:\overline X\rightarrow X$ be the induced dominant morphism. Let $U\subset X$ be an affine open subset such that $U\cap C\ne\emptyset$.  $\psi^{-1}(U)$ is affine since it is a closed subscheme of the affine scheme $U\otimes_k\overline k$. Let $A=\Gamma(U,\mathcal O_X)$ and $B=\Gamma(\psi^{-1}(U),\mathcal O_{\overline X})$. The ring extension $A\rightarrow B$ is integral. Let $P=\Gamma(U,\mathcal I_C)$, a  prime ideal of $A$ such that $\dim A/P=1$, and let $M$ be a maximal ideal of $A$ containing $P$.  By the going up theorem, there exists a prime ideal $Q$ of $B$ such that $Q\cap A=P$ and prime ideal $N$ of $B$ such that $Q\subset N$ and $N\cap A=M$. Now $A/M\rightarrow B/N$ is an integral extension from a field to a domain, so $B/N$ is a field. Thus $N$ is a maximal ideal of $B$ and since there are no prime ideals of $B$ properly between $Q$ and $N$ (by \cite[Corollary 5.9]{At}) we have that $\dim B/Q=1$. Let $\overline C$ be the closure of $V(Q)\subset \psi^{-1}(U)$ in $\overline X$. Then $\overline C$ is an integral curve on $X$ which dominates $C$. There exists a field of definition $k'$ of $\overline X$ and $\overline C$ over $k$ which is a subfield of $\overline k$ which is finite over $k$. That is, there exist subvarieties $C'\subset X'$ of $X\otimes_kk'$ such that $X'\otimes_{k'}\overline k=\overline X$ and $C'\otimes_{k'}\overline k=\overline C$.  We factor $\psi:\overline X\rightarrow X$ by morphisms
   $$
   \overline X\stackrel{\alpha}{\rightarrow} X'\stackrel{\phi}{\rightarrow} X
   $$
   where $\alpha={\rm id}_X'\otimes_{{\rm id}_{k'}}{\rm id}_{\overline k}$.  The morphism $\phi$ is finite and surjective and $\alpha$ is flat (although it might not be of finite type). Let $H$ be an ample Cartier divisor on $X$.
   Then $\phi^*H$ is an ample Cartier divisor on  $X'$ (by \cite[Exercise III.5.7(d)]{H}). Thus for some positive integer $m$ we have that global sections of $\mathcal O_{X'}(m\phi^*(H))$  give a closed embedding of $X'$ in $\PP^n_{k'}$ for some $n$. Thus 
 global sections of $\mathcal O_{\overline X}(m\psi^*(H))$  give a closed embedding of $\overline X=X'\otimes_{k'}\overline k$ in $\PP^n_{\overline k}$. In particular, we have that $\psi^*(H)$ is an ample Cartier divisor on $\overline X$. We have natural morphisms
 $$
 N^1(X)\rightarrow N^1(X')\rightarrow N^1(\overline X).
 $$
 Here $X$ is a $k$-variety and $\overline X$ is a $\overline k$-variety. $X'$ is both a $k$-variety and a $k'$-variety. When we are regarding $X'$ as a $k$-variety we will write $X'_k$ and when we are regarding $X'$ as a $k'$-variety we will write $X'_{k'}$.
 
 We may use the formalism of   Kleiman \cite{Kl}, using the Snapper polynomials \cite{Sn} to compute intersection products of Cartier divisors. This is consistent with the intersection products of Fulton \cite{F} by \cite[Example 18.3.6]{F}.  This intersection theory is also presented in \cite[Chapter 19]{AG}.
 
 Since $D$ is numerically equivalent to zero as a Weil divisor, we have that
 \begin{equation}\label{eq56}
 (D\cdot H^{d-1})_X=(D^2\cdot H^{d-2})_X=0.
 \end{equation}
 We have that 
 $$
 (\psi^*D\cdot \psi^*H^{d-1})_{\overline X}=(\phi^*D\cdot \phi^*H^{d-1})_{X'_{k'}}=\frac{1}{[k':k]}(\phi^* D\cdot \phi^* H^{d-1})_{X'_k}
 $$
  using \cite[Example 18.3.6]{F} and the fact that 
 $$
 H^i(\overline X,\mathcal O_{\overline X}(\psi^*(mD)+\psi^*(nH)))=H^i(X'_{k'},\mathcal O_{X'}(\phi^*(mD)+\phi^*(nH)))\otimes_{k'}\overline k
 $$
 for all $m,n$ since $\alpha$ is flat. We thus have that 
 \begin{equation}\label{eq57}
 (\psi^*D\cdot \psi^*H^{d-1})_{\overline X}=\frac{1}{[k':k]}(\phi^* D\cdot \phi^* H^{d-1})_{X'_k}=\frac{\deg(X'/X)}{[k':k]}(D\cdot H^{d-1})_X=0
 \end{equation}
 by  \cite[Proposition 2.3]{F} and (\ref{eq56}). Similarly, 
 \begin{equation}\label{eq58}
 (\psi^*D^2\cdot \psi^*H^{d-2})_{\overline X}=0.
 \end{equation}
 
 Since  $\overline k$ is algebraically closed and the equations (\ref{eq57}) and (\ref{eq58}) hold,  we have that 
 $$
 (\psi^*D\cdot \overline C)_{\overline X}=0
 $$
  by  \cite{M} and \cite[Corollary XIII.7.4]{Kl2}. Thus by \cite[Example 18.3.6 and Proposition 2.3]{F},
  $$
  \begin{array}{lll}
  0&=&(\psi^*D\cdot \overline C)_{\overline X}=(\phi^*D\cdot C')_{X'_{k'}}
  =\frac{1}{[k':k]}(\phi^*D\cdot C')_{X'_{k}}\\
  &=&\frac{1}{[k':k]}(D\cdot \phi_*C')_X=\frac{\deg(C'/C)}{[k':k]}(D\cdot C)_X,
  \end{array}
  $$
  giving a contradiction to (\ref{eq59}). Thus the map $N^1(X)\rightarrow N_{d-1}(X)$ is injective.
   
 This homomorphism is always an  isomorphism if $X$ is nonsingular by \cite[Example 19.1.5]{F}. 
  \end{proof}

 As defined and developed in \cite{Kl}, \cite[Chapter 2]{L}, there are important cones ${\rm Amp}(X)$ (the ample cone), ${\rm Big}(X)$ (the big cone), ${\rm Nef}(X)$ (the nef cone) and ${\rm Psef}(X):=\overline{\rm Eff}(X)$ (the pseudo effective cone) in $N^1(X)$.
  
   If $D$ is a Cartier divisor on the projective variety $X$, then the  complete linear system $|D|$ is defined by
   \begin{equation}\label{eq30}
   |D|=\{{\rm div}(\sigma)\mid \sigma\in \Gamma(X,\mathcal O_X(D))\}.
   \end{equation}
   Let ${\rm Mov'}(X)$ be the convex cone in $N^1(X)$ generated by the classes of Cartier divisors $D$ such that $|D|$ has no codimension 1 fixed component. Define $\overline {\rm Mov}(X)$ to be the closure of ${\rm Mov'}(X)$ in $N^1(X)$. An $\RR$-Cartier divisor $D$ is said to be movable if the class of $D$ is in $\overline {\rm Mov}(X)$.   Define ${\rm Mov}(X)$ to be the interior of $\overline{\rm Mov}(X)$. As explained in \cite[page 85]{N}, we have  inclusions
   $$
   {\rm Amp}(X)\subset {\rm Mov}(X)\subset {\rm Big}(X)
   $$
   and
   $$
   {\rm Nef}(X)\subset \overline{\rm Mov}(X)\subset {\rm Psef}(X).
   $$

\begin{Lemma}\label{Lemma7} Suppose that $X$ is a $d$-dimensional  variety over a field $k$, $D$ is a pseudo effective $\RR$-Cartier divisor on $X$, $H$ is an ample $\QQ$-Cartier divisor on $X$ and $(H^{n-1}\cdot D)_X=0$. Then $D\equiv 0$.
\end{Lemma}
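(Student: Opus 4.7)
The strategy is to induct on $d = \dim X$, reducing to the surface case via restriction to a sufficiently general hyperplane section of a very ample linear system; on surfaces the result follows cleanly from the Zariski decomposition together with Hodge Index. Over a non-algebraically-closed base one first reduces to the algebraic closure by flat base change, exactly as in the proof of Lemma~\ref{Lemma55}, using that the hypotheses (pseudo-effectivity of $D$, ampleness of $H$, and vanishing of the intersection number) are all preserved under pullback and that $N^1(X) \hookrightarrow N^1(\overline X)$.

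For the base case $d = 2$, I would write $D = P + N$ via the Zariski decomposition (available on projective surfaces over any field), with $P$ nef and $N = \sum n_i E_i$ effective, $n_i \geq 0$. The hypothesis $(D \cdot H) = 0$ combined with $(P\cdot H)\geq 0$ and $(E_i \cdot H) > 0$ (by ampleness of $H$) forces each $n_i = 0$ and hence $N = 0$ and $(P\cdot H) = 0$. For the nef class $P$, the Hodge Index inequality $(P \cdot H)^2 \geq (P^2)(H^2)$ together with $P^2 \geq 0$ yields $P^2 = 0$; the equality case of Hodge Index then forces $P$ to be numerically proportional to $H$, say $P \equiv cH$, and $(P \cdot H) = c(H^2) = 0$ gives $c = 0$. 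Thus $D \equiv 0$.

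For $d \geq 3$, I would choose $m \gg 0$ so that $mH$ is very ample Cartier, and take $Y \in |mH|$ to be a sufficiently generic member so that $Y$ is an integral $(d-1)$-dimensional subvariety, $H|_Y$ is ample, and $D|_Y$ is pseudo-effective on $Y$. The pseudo-effectivity of $D|_Y$ is obtained by approximating $D$ by a sequence of effective $\RR$-divisors $D_n$ in $N^1(X)_\RR$ and arranging via Bertini that $Y$ avoids the codimension-one components of all the $D_n$, so that each $D_n|_Y$ remains effective and $D|_Y = \lim D_n|_Y$ lies in $\overline{\rm Eff}(Y)$. By the projection formula,
$$
((H|_Y)^{d-2}\cdot D|_Y)_Y = m(H^{d-1}\cdot D)_X = 0,
$$
so the inductive hypothesis gives $D|_Y \equiv 0$ in $N^1(Y)$. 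To conclude $D \equiv 0$ in $N^1(X)$, I would argue that for any integral curve $C \subset X$, the subsystem $|mH \otimes \mathcal I_C|$ of divisors in $|mH|$ containing $C$ is of sufficiently high projective dimension (for $m \gg 0$) to meet the open locus of ``good'' $Y$'s described above; choosing such a $Y$ through $C$ then yields $(D \cdot C)_X = (D|_Y \cdot C)_Y = 0$.

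The main obstacle is simultaneously arranging all the Bertini-type genericity conditions on $Y$ while also requiring $Y$ to contain a prescribed curve $C$; this requires a careful dimension count showing that for $m$ large the linear subsystem $|mH \otimes \mathcal I_C|$ has dimension exceeding the total codimension of the finitely many ``bad'' closed subsets of $|mH|$ (the $Y$'s that fail to be integral, fail to make $D_n|_Y$ effective, etc.). An alternative approach via Hodge Index directly, $(D\cdot H^{d-1})^2 \geq (H^d)(D^2\cdot H^{d-2})$, quickly gives $(D^2\cdot H^{d-2})\le 0$, but establishing the reverse inequality $(D^2\cdot H^{d-2})\ge 0$ for a general pseudo-effective $D$ is itself problematic, which is why the inductive reduction to the surface case seems preferable.
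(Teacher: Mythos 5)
Your plan (reduce to $\overline k$ as in Lemma~\ref{Lemma55}, descend to dimension two via hyperplane sections through a witness curve $C$, finish on a surface by Hodge Index and Zariski decomposition) is exactly the skeleton of the paper's argument. But the two steps you flag as delicate — which you leave as sketches — are precisely where the work lies, and as written they contain genuine gaps.

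In the base case you invoke Zariski decomposition on an arbitrary projective surface, but the surfaces produced by iterated hyperplane sections of a (possibly singular) $X$ are themselves typically singular, and the form of the argument you use (nef $P$, effective $N$, then the Hodge Index equality case) needs a nonsingular surface. The paper normalizes at each step and, on reaching a two-dimensional $S$, resolves singularities (via \cite{Ab}, \cite{Li} or \cite{CJS}, available over any field), replacing the pullback of $\tilde H$ by $\gamma^*(m\tilde H)-E$ for a suitable exceptional $E$ to recover ampleness, and concludes on the resolution $T$ using Lemma~\ref{Lemma55} and \cite[Theorem 1.4.29]{L}. More seriously, in the inductive step you acknowledge that forcing $Y$ to contain $C$ while meeting all the Bertini conditions is ``the main obstacle,'' propose a dimension count, and do not carry it out. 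The conditions you stipulate — that $Y$ avoid the supports of an entire sequence $D_n \to D$ — are countably many and cannot be met by ``general'' over a countable base field; no dimension count in $|mH \otimes \mathcal{I}_C|$ by itself addresses that. The paper sidesteps the count: normalize $Y$, blow up $\tilde C$ to obtain $\phi: Z \to Y$ with exceptional divisor $E$, choose $m$ so that $\phi^*(m\tilde H)-E$ is very ample on $Z$, observe that the subsystem $L$ of members of $|m\tilde H|$ containing $\tilde C$ is exactly the pushdown of this base-point-free system (so the associated rational map has $d$-dimensional image), and then invoke the first theorem of Bertini to get an integral $W \in L$ with $\tilde C \subset W$. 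That blowup-plus-Bertini construction is the content your proposal is missing; without it, the induction does not close.
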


\begin{proof} We will  establish the lemma when $k$ is algebraically closed. The lemma will then follow for arbitrary $k$ by the method of the proof of Lemma \ref{Lemma55}.

We consider two operations on varieties. First suppose that $Y$ is a projective variety of dimension $d\ge 2$ over $k$, $\tilde H$ is an ample $\QQ$-Cartier divisor  and $\tilde D$ is a pseudo effective  $\RR$-Cartier divisor on $Y$ and $\tilde C$ is an integral curve on $Y$. Let 
$\pi:\overline Y\rightarrow Y$ be the normalization of $Y$. Then there exists an integral curve $\overline C$ in $\overline Y$ such that $\pi(\overline C) =\tilde C$ (as in the proof of Lemma \ref{Lemma55}). We have that
$$
(\pi^*(\tilde H)^{d-1}\cdot \pi^*(\tilde D))_{\overline Y}=(\tilde H^{d-1}\cdot \tilde D)_Y
$$
 and  
$$
(\overline C\cdot \pi^*(\tilde D))_{\overline Y}=\mbox{deg}(\overline C/\tilde C)(\tilde C\cdot \tilde D)_Y.
$$
 We further have that $\pi^*(\tilde D)$ is pseudo effective.

For the second operation, suppose that $Y$ is a normal projective variety over $k$. Let $\tilde H$ be an ample $\QQ$-Cartier divisor on $Y$ and $\tilde D$ be a pseudo effective $\RR$-Cartier divisor on $Y$. Let
$\tilde C$ be an integral curve on $Y$. Let $\phi:Z:=B(\tilde C)\rightarrow Y$ be the blow up of $\tilde C$. Let $E$ be the effective Cartier divisor on $Z$ such that $\mathcal O_Z(-E)=\mathcal I_{\tilde C}\mathcal O_Z$. There exists a positive integer $m$ such that $m\tilde H$ is a Cartier divisor and $\phi^*(m\tilde H)-E$ is very ample on $Z$. Let $L$ be the linear system
$$
L=\{F\in |mH|\mid \tilde C\subset \mbox{Supp}(F)\}
$$
on $Y$. The base locus of $L$ is $\tilde C$. We have an induced rational map $\Phi_L:X\dashrightarrow \PP^n$ where $n$ is the dimension of $L$. Let $Y'$ be the image of $\Phi_L$. Then $Y'\cong Z$ since $\phi^*(m\tilde H)-E$ is very ample on $Z$. Thus $\dim Y'=d$ and we have equality of  function fields $k(Y')=k(Y)$. By the first theorem of Bertini, \cite{M1}, \cite[Section I.7]{Z}, \cite[Theorem 22.12]{AG}, a general member $W$ of $L$ is  integral, so that it is a variety. By construction, $\tilde C\subset W$. Let $\alpha:W\rightarrow Y$ be the inclusion. We have that $\alpha^*(\tilde H)$ is ample on $W$. A general member of $L$ is not a component  of the support of $\tilde D$ so $\alpha^*(\tilde D)$ is pseudo effective. We have that
$(\alpha^*(\tilde H)^{d-2}\cdot \alpha^*(\tilde D))_W=(\tilde H^{d-1}\cdot \tilde D)_Y$.
  Further,
$(\tilde C\cdot \alpha^*(\tilde D))_W=(\tilde C\cdot \tilde D)_Y$.

Suppose that $D$ is not numerically equivalent to zero. We will derive a contradiction. There then exists an integral curve $C$ on $X$ such that $(C\cdot D)_X\ne 0$.
By iterating the above two operations, we construct a morphism of $k$-varieties $\beta:S\rightarrow X$ such that $S$ is a two dimensional projective variety, with an integral curve 
$\tilde C$ on $S$, an ample $\QQ$-Cartier divisor $\tilde H$ on $S$ and a pseudo effective $\RR$-Cartier divisor on $S$ such that
$(\tilde H\cdot \tilde D)_S=0$ but $(\tilde D\cdot \tilde C)_S\ne 0$. Let $\gamma:T\rightarrow S$ be a resolution of singularities (which exists by \cite{Ab}, \cite{Li} or \cite{CJS}). 
There exists an exceptional divisor $E$ on $T$ and a positive integer $m$ such that $m\tilde H$ is a Cartier divisor on $S$ and $A:=\gamma^*(m\tilde H)-E$ is an ample $\QQ$-Cartier divisor. There exists an integral curve $\overline C$ on $T$ such that $\gamma(\overline C)=\tilde C$ and $\gamma^*(\tilde D)$ is a pseudo effective $\RR$-Cartier divisor. Since $E$ is exceptional for $\gamma$, We have that
$$
(A\cdot \gamma^*(\tilde D))_T=(\gamma^*(m\tilde H)-E)\cdot \gamma^*(\tilde D))_T=(\gamma^*(m\tilde H)\cdot \gamma^*(\tilde D))_T=m(\tilde H\cdot \tilde D)_S=0
$$
and
$$
(\gamma^*(\tilde D)\cdot \overline C)=\deg(\overline C/\tilde C)(\tilde C\cdot \tilde D)_S\ne 0
$$
by \cite[Chapter I]{Kl}, \cite[Proposition 19.8 and Proposition 19.12]{AG}. But this is a contradiction to  \cite[Theorem 1, page 317]{Kl}, \cite[Theorem 1.4.29]{L}, since $N^1(T)=N_1(T)$ by Lemma \ref{Lemma55}.

\end{proof}

\subsection{Normal varieties}\label{subsecnorm} In this section we review some material from \cite{FKL}.
Suppose that $X$ is a normal projective variety over a field $k$. The map $D\rightarrow [ D]$ is an inclusion of  ${\rm Div}(X)$ into  $Z_{d-1}(X)$, and thus induces an  inclusion of  ${\rm Div}(X)\otimes\mathcal \RR$ into $Z_{d-1}(X)\otimes \RR$. We may thus identify a Cartier divisor $D$ on $X$ with its associated Weil divisor $[D]$.

Let $x$ be a real number.  Define $\lfloor x\rfloor$ to be  the round down of $x$ and $\{x\}=x-\lfloor x\rfloor$. 
Let $E$ be an $\RR$-Weil divisor on a normal variety $X$ (an element of $Z_{d-1}(X)\otimes \RR$).  Expand $E=\sum a_iE_i$ with $a_i\in \RR$ and $E_i$ prime divisors on $X$. Then we have associated divisors
$$
\lfloor E\rfloor =\sum \lfloor a_i\rfloor E_i\mbox{ and }\{E\}=\sum \{a_i\}E_i.
$$ 
There is an associated sheaf coherent sheaf $\mathcal O_X(E)$ on $X$ defined by 
$$
\Gamma(U,E)=\{f\in k(X)^*\mid \mbox{div}(f)+E|_U \ge 0\}\mbox{ for $U$ an open subset of $X$.}
$$
 We have that $\mathcal O_X(D)=\mathcal O_X(\lfloor D\rfloor)$. 
If $D$ and $D'$ are $\RR$-Weil divisors on $X$, then define $D'\sim_{\ZZ}D$ if $D'-D=\mbox{div}(f)$ for some $f\in k(X)$. 
Define $D'\sim_{\QQ}D$ if there exists $m\in \ZZ_{>0}$ such that $mD'\sim_{\ZZ}mD$.


For $D$ an $\RR$-Weil divisor, the complete linear system $|D|$ is defined as  
$$
|D|=\{\mbox{$\RR$-Weil divisors }D'\mid D'\ge 0\mbox{ and }D'\sim_{\ZZ}D\}.
$$
If $D$ is an integral Cartier divisor, then this is in agreement with the definition of (\ref{eq30}). For $D$ an $\RR$ Weil divisor, we define
$$
|D|_{\QQ}=\{\mbox{$\RR$-Weil divisors } D'\mid D'\ge 0\mbox{ and }D'\sim_{\QQ}D\}.
$$

\subsection{$\sigma$-decomposition}\label{Subsecsigma} In this subsection we assume that $X$ is a nonsingular projective variety over a field $k$. We will restrict our use of $\sigma$-decompositions to this situation. Nakayama defined and developed $\sigma$-decompositions for nonsingular complex  projective varieties in Chapter III of \cite{N}. The theory and proofs in this chapter extend to arbitrary fields. 
The $\sigma$-decomposition is  extended to complete normal projective varieties in \cite{FKL}.

Since $X$ is nonsingular, the map $D\rightarrow [D]$ is an isomorphism from ${\rm Div}(X)$ to  $Z_{d-1}(X)$, and thus induces an  isomorphism ${\rm Div}(X)\otimes\mathcal \RR\rightarrow Z_{d-1}(X)\otimes \RR$. Thus we may identify  $\RR$-Cartier divisors and $\RR$-Weil divisors on  $X$, which we will refer to as $\RR$-divisors. Since $X$ is normal, we may use the theory of Subsection \ref{subsecnorm}. 

Let $D$ be an $\RR$-divisor. We define 
$$
|D|_{\rm num}=\{\mbox{$\RR$ divisors $D'$ on $X$}\mid D'\ge 0\mbox{ and }D'\equiv D\}.
$$

Let $D$ be a big $\RR$-divisor and $\Gamma$ be a prime  divisor on $X$. Then we define
$$
\sigma_{\Gamma}(D)_{\ZZ}:=\left\{\begin{array}{ll}
\inf\{\mbox{mult}_{\Gamma}\Delta\mid \Delta\in |D|\}&\mbox{ if }|D|\ne 0\\
+\infty&\mbox{ if }|D|=\emptyset,
\end{array}\right.
$$
$$
\sigma_{\Gamma}(D)_{\QQ}:=\inf\{\mbox{mult}_{\Gamma}\Delta\mid \Delta\in |D|_{\QQ}\},
$$
$$
\sigma_{\Gamma}(D):=\inf\{\mbox{mult}_{\Gamma}\Delta\mid \Delta\in |D|_{\rm num}\}.
$$
These three functions $\sigma_{\Gamma}(D)_*$ satisfy 
$$
\sigma_{\Gamma}(D_1+D_2)_*\le \sigma_{\Gamma}(D_1)_*+\sigma_{\Gamma}(D_2)_*.
$$

We have that 
\begin{equation}\label{eq37}
\sigma_{\Gamma}(D)_{\QQ}=\sigma_{\Gamma}(D)
\end{equation}
 by \cite[Lemma III.1.4]{N}. 
 
 The function $\sigma_{\Gamma}$ is continuous on ${\rm Big}(X)$ by \cite[Lemma 1.7]{N}.



If $D$ is a pseudo effective $\RR$-divisor and $\Gamma$ is a prime divisor, then 
$$
\sigma_{\Gamma}(D):=\lim_{t\rightarrow 0^+}\sigma_{\Gamma}(D+tA)
$$ 
where $A$ is any ample $\RR$-divisor on $X$. These limits exist and converge to the same number by \cite[Lemma 1.5]{N}.
By \cite[Corollary 1.11]{N}, there are only finitely many prime divisors $\Gamma$ on $X$ such that $\sigma_{\Gamma}(D)>0$.
 For a given pseudo effective $\RR$-divisor $D$, the  $\RR$-divisors 
$$
N_{\sigma}(D)=\sum_{\Gamma}\sigma_{\Gamma}(D)\Gamma
\mbox{ and }
P_{\sigma}(D)=D-N_{\sigma}(D)
$$
are defined  in \cite[Definition 1.12]{N}. The decomposition $D=P_{\sigma}(D)+N_{\sigma}(D)$ is called the $\sigma$-decomposition of $D$.


Suppose that $D$ is a  pseudo effective $\RR$-divisor, $A$ and $H$ are ample $\RR$-divisors and $t,\epsilon>0$. Then, since $D+tA+\epsilon H$, $D+\epsilon H$  and $tA$ are big, we have that  for any prime divisor $\Gamma$,
$$
\sigma_{\Gamma}(D+tA+\epsilon H)\le \sigma_{\Gamma}(D+\epsilon H)+\sigma_{\Gamma}(tA)=\sigma_{\Gamma}(D+\epsilon H).
$$
Thus
$$
\sigma_{\Gamma}(D+tA)=\lim_{\epsilon\rightarrow 0^+}\sigma_{\Gamma}(D+tA+\epsilon H)
\le \lim_{\epsilon\rightarrow 0^+}\sigma_{\Gamma}(D+\epsilon H)=\sigma_{\Gamma}(D).
$$
In particular, if $\Gamma_1,\ldots,\Gamma_s$ are the prime divisors such that $N_{\sigma}(D)=\sum_{i=1}^s a_i\Gamma_i$
where $a_i>0$ for all $i$, then for all $t>0$, there is an expansion $N_{\sigma}(D+tA)=\sum_{i=1}^sa_i(t)\Gamma_i$ where 
$a_i(t)\in \RR_{\ge 0}$. Thus $\lim_{t\rightarrow 0^+}N_{\sigma}(D+tA)=N_{\sigma}(D)$ and $\lim_{t\rightarrow 0^+}P_{\sigma}(D+tA)=P_{\sigma}(D)$.

\begin{Lemma}\label{Lemma31} Suppose that $D$ is a pseudo effective $\RR$-divisor on a nonsingular projective variety $X$. Then
\begin{enumerate}
\item[1)] $P_{\sigma}(D)$ is pseudo effective.\item[2)] $\sigma_{\Gamma}(P_{\sigma}(D))=0$ for all prime divisors $\Gamma$ on $X$, so that the class of $P_{\sigma}(D)$ is in $\overline{\rm Mov}(X)$.
\item[3)] $N_{\sigma}(D)=0$ if and only if the class of $D$ is in $\overline{\rm Mov}(X)$.
\end{enumerate}
\end{Lemma}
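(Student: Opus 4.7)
My plan is to adapt Nakayama's development of the $\sigma$-decomposition \cite[Chapter III]{N}, which relies only on tools already available in our setting: the subadditivity $\sigma_\Gamma(D_1+D_2)\le\sigma_\Gamma(D_1)+\sigma_\Gamma(D_2)$ noted above, the continuity of $\sigma_\Gamma$ on ${\rm Big}(X)$, and the approximations $N_\sigma(D)=\lim_{t\to 0^+}N_\sigma(D+tA)$ and $P_\sigma(D)=\lim_{t\to 0^+}P_\sigma(D+tA)$ just established. Since ${\rm Psef}(X)$ is closed, statement (1) for a general pseudo-effective $D$ reduces at once to the case of $D$ big, so I first treat that case.

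Assume $D$ is big and let $\Gamma_1,\dots,\Gamma_s$ be the finitely many prime divisors with $\sigma_{\Gamma_i}(D)>0$. The technical heart is the following claim: for each $\epsilon>0$ there is a single effective $\RR$-divisor $\Delta\equiv D$ with $\mbox{mult}_{\Gamma_i}(\Delta)\le\sigma_{\Gamma_i}(D)+\epsilon$ for every $i$ simultaneously. Granting this, $\Delta-N_\sigma(D)+\epsilon\sum_i\Gamma_i$ is effective and numerically equivalent to $P_\sigma(D)+\epsilon\sum_i\Gamma_i$; letting $\epsilon\to 0^+$ exhibits $P_\sigma(D)$ as a limit of pseudo-effective classes, giving (1). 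For (2) I distinguish two cases. When $\Gamma\notin\{\Gamma_i\}$, subadditivity and $\sigma_\Gamma(\Gamma_i)=0$ for $\Gamma\ne\Gamma_i$ give the preservation identity $\sigma_\Gamma(D-c\Gamma_i)=\sigma_\Gamma(D)$ for $0\le c\le\sigma_{\Gamma_i}(D)$, and iterating yields $\sigma_\Gamma(P_\sigma(D))=\sigma_\Gamma(D)=0$. When $\Gamma=\Gamma_i$, the simultaneous $\Delta$ above makes $\Delta-N_\sigma(D)$ an effective representative of $P_\sigma(D)$ with multiplicity at most $\epsilon$ at $\Gamma_i$, so $\sigma_{\Gamma_i}(P_\sigma(D))=0$. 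Since $\sigma_\Gamma(P_\sigma(D))$ vanishes for every prime $\Gamma$, small ample perturbations $P_\sigma(D)+\delta A$ are big with vanishing $\sigma_\Gamma$, so (after clearing denominators) are represented by non-negative rational combinations of Cartier divisors whose complete linear systems have no codimension one fixed component; hence the class of $P_\sigma(D)$ lies in $\overline{\rm Mov}(X)$.

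For (3), the implication that $N_\sigma(D)=0$ forces the class of $D$ to lie in $\overline{\rm Mov}(X)$ is immediate from (2). Conversely, I approximate the class of $D$ by elements of ${\rm Mov'}(X)$, reducing to expressions $\sum_k a_kD_{n,k}$ with $a_k>0$ and $D_{n,k}$ Cartier divisors whose $|D_{n,k}|$ has no codimension one fixed component; then $\sigma_\Gamma(D_{n,k})\le\sigma_\Gamma(D_{n,k})_\ZZ=0$ by the infimum definition, and subadditivity applied to $D+tA=\sum_ka_kD_{n,k}+((D-\sum_ka_kD_{n,k})+tA)$, together with $(D-\sum_ka_kD_{n,k})+tA$ being ample for $n$ large (since the ample cone is open), yields $\sigma_\Gamma(D+tA)=0$ for each $t>0$; taking $t\to 0^+$ gives $\sigma_\Gamma(D)=0$ for all primes $\Gamma$, whence $N_\sigma(D)=0$. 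The principal obstacle is the simultaneous multiplicity control in the technical heart: naive averaging of single-variable near-minimizers does not work, since the linear functionals $\mbox{mult}_{\Gamma_i}$ need not share near-minimizers on the convex set of effective representatives of $D$. Following Nakayama, one proceeds by induction on the number of support primes, subtracting one $\Gamma_i$ at a time while perturbing by a small ample divisor to keep the running difference in ${\rm Big}(X)$, and using the single-prime case together with the preservation identity above to justify the inductive step.
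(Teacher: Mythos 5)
Your proposal is correct in outline but takes a genuinely different route from the paper's proof, which for parts 2) and 3) simply cites Nakayama (\cite[Lemma III.1.8, Proposition III.1.14]{N}) rather than re-deriving the facts. The paper's entire proof of 1) is: $D+\epsilon A$ is big for $\epsilon>0$, hence $P_\sigma(D+\epsilon A)$ is big (a Nakayama fact; it also follows at once from (\ref{eq70}) and the equality ${\rm vol}(P_\sigma(D+\epsilon A))={\rm vol}(D+\epsilon A)>0$), and then $P_\sigma(D)=\lim_{\epsilon\to 0^+}P_\sigma(D+\epsilon A)$ is a limit of big classes, hence pseudo-effective. Your proposal is more self-contained in spirit, and the inductive argument you point to (subtract one $\Gamma_i$ at a time, perturb by an ample to stay big, use the single-prime case $\sigma_{\Gamma_i}(D-\sigma_{\Gamma_i}(D)\Gamma_i)=0$ together with the preservation identity for $\Gamma\ne\Gamma_i$) is exactly how \cite{N} proves Lemma III.1.8, so the route is sound. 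The trade-off is that you must carry out that induction in detail, whereas the paper gets to be two lines because it outsources the hard content.

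Two corrections worth making. First, you over-engineer part 1): the simultaneous multiplicity bound is not needed there. For any effective $\Delta\equiv D$ one automatically has $\mbox{mult}_{\Gamma_i}(\Delta)\ge\sigma_{\Gamma_i}(D)$ (by the very definition of $\sigma_{\Gamma_i}$ as an infimum over $|D|_{\rm num}$), so $\Delta-N_\sigma(D)$ is already an effective representative of $P_\sigma(D)$; then pass to the limit for the non-big pseudo-effective case. The simultaneous bound is genuinely needed only for part 2) at the primes $\Gamma=\Gamma_i$. Second, in your "preservation identity" you should note that the inequality $\sigma_\Gamma(D-c\Gamma_i)\le\sigma_\Gamma(D)$ for $\Gamma\ne\Gamma_i$ does not come from subadditivity applied with a negative coefficient (which is not valid); it comes instead from noting that any effective $\Delta\equiv D$ has $\mbox{mult}_{\Gamma_i}(\Delta)\ge\sigma_{\Gamma_i}(D)\ge c$, so $\Delta-c\Gamma_i$ is an effective representative of $D-c\Gamma_i$ with the same multiplicity along $\Gamma$. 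The reverse inequality $\sigma_\Gamma(D)\le\sigma_\Gamma(D-c\Gamma_i)$ is the one that is direct subadditivity together with $\sigma_\Gamma(\Gamma_i)=0$. With those repairs your sketch does reproduce Nakayama's content; the paper simply elects to cite it.
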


\begin{proof} 
Let $A$ be an ample $\RR$-divisor on $X$. For all $\epsilon>0$, $D+\epsilon A$ is big. Thus the class of 
$D+\epsilon A-\sum \sigma_{\Gamma}(D+\epsilon A)\Gamma$ is in $\mbox{Big}(X)$.
 Thus $P_{\sigma}(D)=\lim_{\epsilon \rightarrow 0^+}D+\epsilon A-\sum \sigma_{\Gamma}(D+\epsilon A)\Gamma$ is pseudo effective. 
Statement 2) follows from \cite[Lemma III.1.8]{N} and \cite[Proposition III.1.14]{N}. Statement 3) is \cite[Proposition III.1.14]{N}.

\end{proof}


\subsection{Movable divisors on a normal variety}\label{SubSecMN}

Let $X$ be a normal projective variety over a field, and $\Gamma$ be a prime divisor on $X$. As explained in \cite{FKL},
 the definitions of $\sigma_{\Gamma}(D)_{\ZZ}$ and $\sigma_{\Gamma}(D)_{\QQ}$ of Subsection \ref{Subsecsigma} extend to $\RR$-Weil divisors $D$ on $X$, as do the inequalities
 $$
 \sigma_{\Gamma}(D_1+D_2)_{\ZZ}\le \sigma_{\Gamma}(D_1)_{\ZZ}+\sigma_{\Gamma}(D_2)_{\ZZ}
 \mbox{ and }
 \sigma_{\Gamma}(D_1+D_2)_{\QQ}\le \sigma_{\Gamma}(D_1)_{\ZZ}+\sigma_{\Gamma}(D_2)_{\QQ}.
 $$
 Let $D$ be a big and movable $\RR$-Cartier divisor on $X$ and $A$ be an ample $\RR$-Cartier divisor on $X$. Then $D+tA\in {\rm Mov}(X)$ for all positive $t$, so that $\sigma(D+tA)_{\QQ}=0$ for all $t>0$. Since $D$ is big, there exists $\delta>0$ such that $D\sim_{\QQ}\delta A+\Delta$ where $\Delta$ is an effective $\RR$-Cartier divisor. Then for all $\epsilon >0$, $(1+\epsilon)D\sim_{\QQ}D+\epsilon\delta A+\epsilon\Delta$ and so 
 $$
 (1+\epsilon)\sigma_{\Gamma}(D)_{\QQ}\le \sigma(D+\epsilon\delta A)_{\QQ}+\epsilon\mbox{mult}_{\Gamma}(\Delta)=\epsilon\mbox{mult}_{\Gamma}(\Delta)
 $$
 for all $\epsilon>0$. Thus, with our assumption that $D$ is a big and movable $\RR$-Cartier divisor, we have that
 \begin{equation}\label{eq50}
 \sigma_{\Gamma}(D)_{\QQ}=0\mbox{ for all prime divisors $\Gamma$ on $X$.}
 \end{equation}

\subsection{Positive intersection products} Let $X$ be a $d$-dimensional projective variety over a field $k$. In \cite{C}, we generalize the positive intersection product on  projective varieties over an algebraically closed field of characteristic zero defined in \cite{BFJ} to projective varieties over an arbitrary field.

 Let $I(X)$ be the directed set of projective varieties $Y$ which have a birational morphism to $X$. If $f:Y'\rightarrow Y$ is in $I(X)$ and $\mathcal L\in N^1(Y)$, then $f^*\mathcal L\in N^1(Y')$. We may thus define $N^1(\mathcal X)=\lim_{\rightarrow }N^1(Y)$. If $D$ is a Cartier $\RR$-divisor on $Y$, we will sometimes abuse notation and identify $D$ with is class in $N^1(\mathcal X)$.
In \cite{C}, $N^1(Y)$ is denoted by $M^1(Y)$.

 For $Y\in I(X)$ and $0\le p\le d$, we let $L^p(Y)$ be the real vector space of $p$-multilinear forms on $N^1(Y)$.
  Giving the finite dimensional real vector space $L^p(Y)$ the Euclidean topology, we define 
 \begin{equation}\label{eq300}
 L^p(\mathcal X)=\lim_{\leftarrow}L^p(Y).
 \end{equation}
  $L^p(\mathcal X)$ is a Hausdorff topological real vector space. We define $L^0(\mathcal X)=\RR$.
The pseudo effective cone ${\rm Psef}(L^p(Y))$ in $L^p(Y)$ is the closure of the cone generated by the natural images of the $p$-dimensional closed subvarieties of $Y$. The inverse limit of the ${\rm Psef}(L^p(Y))$ is then a closed convex and strict cone ${\rm Psef}(L^p(\mathcal X))$ in $L^p(\mathcal X)$, defining a partial order $\ge$ in $L^p(\mathcal X)$. The pseudo effective cone in $L^0(\mathcal X)$ is the set of nonnegative real numbers.
For $Y\in I(X)$, let $\rho_Y:N^1(Y)\rightarrow N^1(\mathcal X)$ and  $\pi_Y:L^p(\mathcal X)\rightarrow L^p(Y)$ be the induced continuous linear maps.  In  \cite{BFJ} they consider a related but different vector space from $L^p(\mathcal X)$.

Suppose that $\alpha_1,\ldots,\alpha_r\in N^1(\mathcal X)$ with $r\le d$. Let $f:Y\rightarrow X\in I(X)$  be such that $\alpha_1,\ldots,\alpha_r$
are represented by classes in $N^1(Y)$ of $\RR$-Cartier divisors $D_1,\ldots,D_r$ on $Y$. Then the ordinary intersection product $D_1\cdot\ldots\cdot D_r$ induces a linear map  $D_1\cdot\ldots\cdot D_r\in L^{d-r}(\mathcal X)$. If $r=d$, then this linear map is just the intersection number $(D_1\cdot\ldots\cdot D_d)_Y\in \RR$ of \cite[Definition 2.4.2]{F}.


If $\alpha_1,\ldots,\alpha_p \in N^1(\mathcal X)$ are big, we define the positive intersection product (\cite[Definition 2.5, Proposition 2.13]{BFJ} in characteristic zero, \cite[Definition 4.4, Proposition 4.12]{C}) to be 
\begin{equation}\label{eq33}
\begin{array}{llll}
\langle \alpha_1\cdot \ldots\cdot\alpha_p\rangle &=& {\rm lub}&\{(\alpha_1- D_1)\cdots (\alpha_p- D_p)\in L^{d-p}(\mathcal X)\mid D_i \mbox{ are effective $\RR$-Cartier}\\
&&&\mbox{divisors  on some $Y_i\in I(X)$  and $\alpha- D_i$ are big}\}.
\end{array}
\end{equation}

\begin{Proposition}\label{Prop35}(\cite[Proposition 2.13]{BFJ}, \cite[Proposition 4.12]{C})
 If $\alpha_1,\ldots,\alpha_p\in N^1(\mathcal X)$ are big, we have that 
$\langle \alpha_1\cdot \ldots\cdot \alpha_p\rangle$ is the least upper bound in $L^{d-p}(\mathcal X)$ of all intersection products
$\beta_1\cdot\ldots\cdot\beta_p$ where $\beta_i$ is the class of a nef $\RR$-Cartier divisor such that $\beta_i\le \alpha_i$ for all $i$.
\end{Proposition}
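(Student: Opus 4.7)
The plan is to prove the claimed equality of suprema in $L^{d-p}(\mathcal X)$ using Fujita approximation for big divisors as the principal tool. Fujita approximation provides, for any big $\RR$-Cartier class $\gamma$ on $X$, birational morphisms $\pi \colon Y \to X$ and decompositions $\pi^{*}\gamma \equiv A + E$ with $A$ ample (in particular nef) and $E$ effective, such that ${\rm vol}(A)$ can be made arbitrarily close to ${\rm vol}(\gamma)$. These ample approximations are the bridge between the family defining $\langle \alpha_1 \cdots \alpha_p \rangle$ in (\ref{eq33}) and the nef family appearing in the statement.

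For the inequality $\sup\{\beta_1 \cdots \beta_p\} \le \langle \alpha_1 \cdots \alpha_p \rangle$ (with $\beta_i$ nef and $\beta_i \le \alpha_i$), I would first apply Fujita approximation simultaneously to each $\alpha_i$ on a common birational model $Y$, writing $\pi^{*}\alpha_i \equiv A_i + E_i$ with $A_i$ ample and $E_i$ effective. Taking $D_i := E_i$, the class $\pi^{*}\alpha_i - D_i = A_i$ is ample (hence big), so $A_1 \cdots A_p$ appears in the defining family of (\ref{eq33}) and satisfies $A_1 \cdots A_p \le \langle \alpha_1 \cdots \alpha_p \rangle$. A general nef class $\beta_i \le \alpha_i$ is then accessed by a small ample perturbation reducing to the ample/Fujita case, together with the continuity of the intersection product in the Euclidean topology of $L^{d-p}(\mathcal X)$. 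For the reverse inequality, given a member $(\alpha_1 - D_1) \cdots (\alpha_p - D_p)$ of the defining family with each $\alpha_i - D_i$ big, apply Fujita approximation to each $\alpha_i - D_i$ on a common refinement $Z$, writing $\alpha_i - D_i \equiv A'_i + E'_i$ with $A'_i$ ample. Since $\alpha_i = A'_i + (E'_i + D_i)$ with $E'_i + D_i$ effective, each $A'_i$ is ample and satisfies $A'_i \le \alpha_i$ in $N^1(\mathcal X)$, so $A'_1 \cdots A'_p$ lies in the nef family; a limiting argument in which the Fujita approximations are sharpened so that ${\rm vol}(A'_i) \to {\rm vol}(\alpha_i - D_i)$ then shows that $(\alpha_1 - D_1) \cdots (\alpha_p - D_p)$ is dominated by the supremum of the nef family in $L^{d-p}(\mathcal X)$.

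The principal obstacle is the lack of factor-by-factor monotonicity of the intersection product: since only nef factors yield termwise monotonicity, neither family dominates the other elementwise. The resolution is a squeeze argument that exploits the continuity of the intersection product on $L^{d-p}(\mathcal X)$ together with the cofinality of Fujita approximations: both suprema emerge as the common limit of mixed ample intersection products arising from joint Fujita approximations of the $\alpha_i$. Carefully organizing the passage to common birational models in the direct limit $N^1(\mathcal X)$ and in the inverse limit $L^{d-p}(\mathcal X)$, and controlling the convergence of these mixed ample intersections under sharpening of the Fujita data, forms the technical bulk of the proof as carried out in \cite{BFJ} and \cite{C}.
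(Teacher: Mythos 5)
The paper gives no proof of Proposition~\ref{Prop35}; it is cited from \cite[Proposition 2.13]{BFJ} and \cite[Proposition 4.12]{C}. In those references (and consistently with how the paper itself uses the definition, e.g.\ in the proof of Lemma~\ref{Lemma200} and in Remark~\ref{Remark50}, which assumes the product of big classes lies in the pseudo-effective cone), the family in~(\ref{eq33}) consists of decompositions with $\alpha_i - D_i$ \emph{nef} and big on some model, not merely big. Once this is read correctly, one inclusion of the proposition is immediate: each $\alpha_i - D_i$ is itself nef and satisfies $\alpha_i - (\alpha_i - D_i) = D_i$ effective, hence pseudo-effective, so $\alpha_i - D_i$ already lies in the nef family $\{\beta_i\}$. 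The entire content is the other inclusion: given nef $\beta_i$ with $\alpha_i - \beta_i$ merely \emph{pseudo-effective}, show $\beta_1\cdots\beta_p\le\langle\alpha_1\cdots\alpha_p\rangle$. The proof of this step perturbs by an ample class $H$ so that $(\alpha_i+\epsilon H)-(\beta_i+\epsilon' H)$ becomes big, uses that a big class on a model is numerically equivalent to an effective $\RR$-Cartier divisor, and lets $\epsilon,\epsilon'\to 0^+$ via continuity of the positive intersection product on the big cone (Lemma~\ref{Lemma36}). The operative fact is ``big implies numerically effective,'' not Fujita approximation.

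Your proof leans on Fujita approximation, and that is precisely where it fails. Fujita approximation controls \emph{only} the volume: it produces ample $A'_i$ below $\alpha_i - D_i$ with ${\rm vol}(A'_i)=(A'_i)^d\to{\rm vol}(\alpha_i - D_i)$, a single scalar per index $i$. It gives no information about the mixed products $A'_1\cdots A'_p\in L^{d-p}(\mathcal X)$ when $p<d$ (these are multilinear forms, not numbers), and in particular no statement that $A'_1\cdots A'_p$ converges to, or dominates, $(\alpha_1-D_1)\cdots(\alpha_p-D_p)$. So the closing ``limiting argument in which the Fujita approximations are sharpened'' has no support; moreover, as explained above, that direction requires no argument at all. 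For the other direction, applying Fujita to the $\alpha_i$ only produces particular ample $A_i$ below $\alpha_i$, and the phrase ``a general nef class $\beta_i\le\alpha_i$ is then accessed by a small ample perturbation reducing to the ample/Fujita case'' papers over the real difficulty: an arbitrary nef $\beta_i\le\alpha_i$ bears no relation to a Fujita decomposition of $\alpha_i$, and what must actually be done is convert the pseudo-effective excess $\alpha_i-\beta_i$ into an effective one after a small ample perturbation. That key step is absent from your sketch.
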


If $\alpha_1,\ldots,\alpha_p\in N^1(\mathcal X)$ are pseudo effective, their positive intersection product is defined  (\cite[Definition 2.10]{BFJ}, \cite[Definition 4.8, Lemma 4.9]{C}) as 
$$
\lim_{\epsilon\rightarrow 0^+}\langle (\alpha_1+\epsilon H)\cdot \ldots \cdot (\alpha_p+\epsilon  H)\rangle
$$
where $H$ is a big $\RR$-Cartier divisor on some $Y\in I(X)$.  

\begin{Lemma}\label{Lemma36}(\cite[Proposition 2.9, Remark 2.11]{BFJ}, [\cite[Lemma 4.13]{C}, \cite[Proposition 4.7]{C}) The positive intersection product $\langle \alpha_1\cdot\ldots\cdot\alpha_p\rangle$ is homogeneous and super additive on each  variable in ${\rm Psef}(\mathcal X)$. Further, it is continuous on the $p$-fold product of the big cone.  \end{Lemma}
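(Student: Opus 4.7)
The plan is to establish each of the three assertions---homogeneity, super-additivity, and continuity---first on the big cone using the least upper bound characterization of Proposition \ref{Prop35}, and then extend homogeneity and super-additivity to the pseudo-effective cone by applying the limiting definition $\langle\alpha_1\cdots\alpha_p\rangle=\lim_{\epsilon\to 0^+}\langle(\alpha_1+\epsilon H)\cdots(\alpha_p+\epsilon H)\rangle$. Positive homogeneity on the big cone is the easiest: for $\lambda>0$ and big $\alpha_i$, the bijection $\beta_1\leftrightarrow\lambda\beta_1$ identifies the nef approximation set $\{\beta_1\text{ nef},\ \beta_1\le\lambda\alpha_1\}$ with $\{\gamma_1\text{ nef},\ \gamma_1\le\alpha_1\}$, and multilinearity of ordinary intersection products then gives that the lub defining $\langle\lambda\alpha_1\cdot\alpha_2\cdots\alpha_p\rangle$ equals $\lambda\langle\alpha_1\cdot\alpha_2\cdots\alpha_p\rangle$.

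For super-additivity in the first variable with big $\alpha_1,\alpha_1',\alpha_2,\ldots,\alpha_p$, I would extract sequences of nef approximants $\beta_1^{(n)}\le\alpha_1$, $(\beta_1')^{(n)}\le\alpha_1'$, and $\beta_i^{(n)}\le\alpha_i$ for $i\ge 2$ that jointly realize $\langle\alpha_1\cdot\alpha_2\cdots\alpha_p\rangle$ and $\langle\alpha_1'\cdot\alpha_2\cdots\alpha_p\rangle$ as limits of ordinary intersection products, with the $\beta_i^{(n)}$ for $i\ge 2$ shared between the two sequences. This shared choice is available by the directed structure of the family of nef approximations inside the big cone of $Y\in I(X)$, as used in \cite[Proposition 4.12]{C}. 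Since $\beta_1^{(n)}+(\beta_1')^{(n)}$ is nef and bounded above by $\alpha_1+\alpha_1'$, multilinearity of ordinary intersection products gives
$$\beta_1^{(n)}\beta_2^{(n)}\cdots\beta_p^{(n)}+(\beta_1')^{(n)}\beta_2^{(n)}\cdots\beta_p^{(n)}=\bigl(\beta_1^{(n)}+(\beta_1')^{(n)}\bigr)\beta_2^{(n)}\cdots\beta_p^{(n)}\le\langle(\alpha_1+\alpha_1')\cdot\alpha_2\cdots\alpha_p\rangle,$$
and passing to the limit yields the desired super-additivity. The pseudo-effective case follows by applying the limiting definition to both sides and using continuity of addition in $L^{d-p}(\mathcal X)$.

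Continuity on the big cone I would deduce from the two previous properties: super-additivity together with positive homogeneity gives a concavity inequality in each variable with respect to the partial order induced by ${\rm Psef}(L^{d-p}(\mathcal X))$, and composing with the continuous projections $\pi_Y\colon L^{d-p}(\mathcal X)\to L^{d-p}(Y)$ reduces the question to continuity of a cone-valued concave function on an open convex subset of the finite-dimensional space $N^1(Y)$. This can then be established coordinatewise using the classical fact that concave real-valued functions on open convex sets in $\RR^n$ are continuous. Joint continuity on the $p$-fold big cone then follows from separate continuity together with the monotonicity and homogeneity properties.

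The hardest step will be the super-additivity argument, where the main subtlety is that the lub does not commute with addition in general. The existence of compatible approximating sequences---whose last $p-1$ entries can be shared between the two positive intersection products---has to be extracted from the directed structure of the nef-approximation family. This is exactly the technical content of \cite[Proposition 2.9, Remark 2.11]{BFJ} in characteristic zero and of \cite[Lemma 4.13]{C} in arbitrary characteristic, which I would quote rather than reprove.
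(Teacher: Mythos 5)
The paper gives no in-house proof of this lemma: it is stated verbatim as a citation of \cite[Proposition 2.9, Remark 2.11]{BFJ} and \cite[Lemma 4.13, Proposition 4.7]{C}, so there is no internal argument to compare your sketch against. With that understood, your outline is a reasonable reconstruction of what those references do. The homogeneity argument via rescaling the nef approximants in Proposition \ref{Prop35} is fine, and you correctly locate the technical heart of super-additivity in the directedness of the nef (equivalently, effective-decomposition) approximating family, which is exactly what makes the inequality $\sup(A)+\sup(B)\le\sup\{a+b\}$ go through in this partial order; deferring that to the cited lemmas is appropriate.

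On continuity, the concavity route you propose is viable but carries two sub-claims you would need to check explicitly. First, to pass from cone-valued concavity on an open set in $N^1(Y)^p$ to continuity, you compose with functionals in the dual cone of ${\rm Psef}(L^{d-p}(Y))$; for these to separate points you need that cone to be pointed. The paper only records that the inverse-limit cone ${\rm Psef}(L^p(\mathcal X))$ is strict, so pointedness of the individual cones ${\rm Psef}(L^{d-p}(Y))$ needs its own (short) argument, e.g.\ pairing against powers of an ample class. Second, separate concavity in each variable plus homogeneity does not by itself yield joint continuity without a local boundedness input, so the last sentence of your continuity paragraph is asserting more than the stated ingredients deliver. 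A cleaner route, and closer to how this is typically done, bypasses concavity entirely: monotonicity of $\langle\cdot\rangle$ in each slot (immediate from the lub description), together with homogeneity and the fact that for $\alpha_i$ big one has $(1-\epsilon)\alpha_i\le\alpha_i'\le(1+\epsilon)\alpha_i$ in $N^1(Y)$ for all $\alpha_i'$ in a small neighborhood, gives the two-sided sandwich $(1-\epsilon)^p\langle\alpha_1\cdots\alpha_p\rangle\le\langle\alpha_1'\cdots\alpha_p'\rangle\le(1+\epsilon)^p\langle\alpha_1\cdots\alpha_p\rangle$ and hence joint continuity on the big cone directly, with no appeal to duality or to a theory of vector-valued concave maps.
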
 

\begin{Remark}\label{Remark50}
Since a positive intersection product is always in the pseudo effective cone, if $\alpha_1,\ldots,\alpha_d\in N^1(\mathcal X)$ are pseudo effective, then 
$\langle \alpha_1\cdot\ldots\cdot\alpha_d\rangle\in \RR_{\ge 0}$. Since the intersection product of nef and big $\RR$-Cartier divisors is positive, it follows from Proposition \ref{Prop35} that if  $\alpha_1,\ldots,\alpha_d\in N^1(\mathcal X)$ are big, then
$\langle \alpha_1\cdot\ldots\cdot\alpha_d\rangle\in \RR_{> 0}$.
\end{Remark}

\begin{Lemma}\label{Lemma34} Let $H$ be an ample $\RR$-Cartier divisor on some $Y\in I(X)$ and let $\alpha\in N^1(\mathcal X)$ be pseudo effective. Then 
$$
\langle  H^{d-1}\cdot \alpha\rangle =  H^{d-1}\cdot\langle \alpha \rangle.
$$
\end{Lemma}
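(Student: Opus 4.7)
The plan is to verify the identity in two stages: first reduce to the case where $\alpha$ is big, then use the characterization of the positive intersection product via nef sub-approximations from Proposition \ref{Prop35}.

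First I would reduce to the case when $\alpha$ is big. By the definition of the positive intersection product for pseudo effective classes,
$$
\langle \alpha \rangle = \lim_{\epsilon\to 0^+}\langle \alpha+\epsilon H\rangle
\quad\text{and}\quad
\langle H^{d-1}\cdot \alpha\rangle = \lim_{\epsilon\to 0^+}\langle H^{d-1}\cdot(\alpha+\epsilon H)\rangle,
$$
so by continuity of the ordinary intersection product $H^{d-1}\cdot(-):N^1(\mathcal X)\to L^0(\mathcal X)$, the pseudo effective statement follows once it is established for big classes. Assume now $\alpha$ is big, and pass to some $Y\in I(X)$ where $H$ and representatives of $\alpha$ live.

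Next, by Proposition \ref{Prop35},
$$
\langle H^{d-1}\cdot\alpha\rangle = \operatorname{lub}\bigl\{\beta_1\cdot\ldots\cdot\beta_{d-1}\cdot \beta_d : \beta_i \text{ nef}, \beta_i\le H, \beta_d \text{ nef}, \beta_d\le \alpha\bigr\}.
$$
For the lower bound $\langle H^{d-1}\cdot\alpha\rangle \ge H^{d-1}\cdot\langle\alpha\rangle$, take $\beta_1=\cdots=\beta_{d-1}=H$, which is permitted since $H$ is nef and $H\le H$; then $H^{d-1}\cdot\beta_d$ lies below $\langle H^{d-1}\cdot\alpha\rangle$ for every nef $\beta_d\le\alpha$, and the least upper bound over such $\beta_d$ is $H^{d-1}\cdot\langle\alpha\rangle$ by Proposition \ref{Prop35} again.

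For the reverse inequality, the key is the telescoping identity
$$
H^{d-1}-\beta_1\cdot\ldots\cdot \beta_{d-1}=\sum_{i=1}^{d-1}\beta_1\cdot\ldots\cdot\beta_{i-1}\cdot (H-\beta_i)\cdot H^{d-1-i}
$$
in $L^{d-1}(\mathcal X)$. Each summand is a product of nef classes with the pseudo effective class $H-\beta_i$, hence is itself pseudo effective (the positive cone $\operatorname{Psef}(L^\bullet(\mathcal X))$ is preserved under multiplication by nef classes). Therefore $H^{d-1}-\beta_1\cdots\beta_{d-1}\in\operatorname{Psef}(L^{d-1}(\mathcal X))$, and pairing with the nef class $\beta_d$ yields $\beta_1\cdots\beta_{d-1}\cdot\beta_d\le H^{d-1}\cdot\beta_d\le H^{d-1}\cdot\langle\alpha\rangle$. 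Taking the least upper bound over all admissible $(\beta_1,\ldots,\beta_d)$ gives $\langle H^{d-1}\cdot\alpha\rangle\le H^{d-1}\cdot\langle\alpha\rangle$, completing the proof.

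The main obstacle I anticipate is the telescoping step: I need the positive cone in the inverse limit $L^{d-1}(\mathcal X)$ to be closed under multiplication by a nef class, and I want to ensure that pseudo-effectivity of $H-\beta_i$ on some model $Y$ is preserved under pullback and multiplication. This should follow from the definition of $\operatorname{Psef}(L^p(\mathcal X))$ as the inverse limit of cones generated by classes of subvarieties and the fact that $H$ nef pulls back to nef, but this is the most technical point to verify carefully.
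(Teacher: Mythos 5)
Your proof is correct and follows essentially the same route as the paper: reduce to the big case via the limit definition of the positive intersection product for pseudo-effective classes, then apply Proposition \ref{Prop35} and observe that, because $H$ is nef, the least upper bound over nef sub-classes is attained when the $H$-factors are taken to be $H$ itself. The paper compresses this by perturbing both inputs at once to $(1+\epsilon)H$ and $\alpha+\epsilon H$ and citing Proposition \ref{Prop35} without spelling out the telescoping step, but your explicit telescoping identity is exactly what that step comes down to.
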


\begin{proof} By Proposition \ref{Prop35}, for all $\epsilon >0$,$$
\langle \left((1+\epsilon) H)\right)^{d-1}\cdot(\alpha+\epsilon  H)\rangle=(1+\epsilon)^{d-1}\left( H^{d-1}\cdot \langle \alpha+\epsilon  H\rangle\right).
$$
Taking the limit as $\epsilon$ goes to zero, we have the conclusions of the lemma.
\end{proof}

\begin{Theorem}\label{Theorem17}
Suppose that $X$ is a $d$-dimensional projective variety, $\alpha\in N^1(X)$ is big and $\gamma\in N^1(X)$ is arbitrary. Then
$$
\frac{d}{dt}{\rm vol}(\alpha+t\gamma)=d\langle (\alpha+t\gamma)^{d-1}\rangle\cdot\gamma
$$
whenever $\alpha+t\gamma$ is big.
\end{Theorem}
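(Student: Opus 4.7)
The plan is to adapt the Boucksom-Favre-Jonsson/Cutkosky strategy: reduce to $t=0$ with $\gamma$ nef, obtain a lower bound on the directional derivative via Fujita approximation, and match it with an upper bound via the orthogonality identity for positive intersection products.

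\textbf{Reductions.} Replacing $\alpha$ by $\alpha + t_0\gamma$ (still big by hypothesis), we reduce to $t = 0$. The right-hand side $d\langle\alpha^{d-1}\rangle\cdot\gamma$ is $\RR$-linear in $\gamma$ by Lemma \ref{Lemma36}. The Minkowski inequality 4) of Theorem \ref{Ineq+} shows ${\rm vol}^{1/d}$ is concave on the big cone, hence ${\rm vol}$ is locally Lipschitz there and its one-sided directional derivatives exist and behave linearly in $\gamma$. Writing $\gamma = A_1 - A_2$ as a difference of ample $\RR$-Cartier divisors therefore reduces the problem to the case when $\gamma$ is nef.

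\textbf{Lower bound.} By Proposition \ref{Prop35}, for any $\delta > 0$ there exist a birational morphism $\mu:Y\to X$ and a nef $\RR$-Cartier divisor $N$ on $Y$ with $\mu^*\alpha - N$ pseudo effective, chosen so that $N^d > {\rm vol}(\alpha) - \delta$ and $N^{d-1}\cdot\mu^*\gamma > \langle\alpha^{d-1}\rangle\cdot\gamma - \delta$; the joint approximation is possible because $\langle\alpha^{d-1}\rangle$ is realized as a least upper bound in $L^1(\mathcal X)$. Since $\gamma$ is nef, $N + t\mu^*\gamma$ is nef, and $\mu^*(\alpha + t\gamma) - (N + t\mu^*\gamma) = \mu^*\alpha - N$ is pseudo effective, so
$${\rm vol}(\alpha + t\gamma) \ge (N + t\mu^*\gamma)^d \ge N^d + dt\, N^{d-1}\cdot\mu^*\gamma,$$
since the higher-order terms $\binom{d}{k}t^k N^{d-k}\cdot(\mu^*\gamma)^k$ for $k \ge 2$ are nonnegative. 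Choosing $\delta = t^2$ and letting $t \to 0^+$ yields the lower bound $\liminf \ge d\langle\alpha^{d-1}\rangle\cdot\gamma$.

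\textbf{Upper bound and main obstacle.} The matching upper bound is the main difficulty, since a direct Fujita approximation of $\alpha + t\gamma$ yields only a wrong-sign inequality for ${\rm vol}(\alpha+t\gamma)$. The standard resolution uses the orthogonality identity ${\rm vol}(\alpha) = \langle\alpha^{d-1}\rangle \cdot \alpha$ for big $\alpha$ (a non-trivial input from \cite{BFJ} and \cite{C}): writing ${\rm vol}(\alpha + t\gamma) = \langle(\alpha + t\gamma)^{d-1}\rangle \cdot (\alpha + t\gamma)$, expanding, and using super-additivity and continuity of the positive intersection product (Lemma \ref{Lemma36}), one extracts
$$\limsup_{t \to 0^+}\frac{{\rm vol}(\alpha+t\gamma)-{\rm vol}(\alpha)}{t}\le d\langle\alpha^{d-1}\rangle\cdot\gamma.$$
Applying the same reasoning at base points $\alpha + s\gamma$ (big for small $s$ of either sign) then gives the corresponding two-sided estimate, and the lower and upper bounds combine to yield differentiability and the asserted formula.
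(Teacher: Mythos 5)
The paper does not prove Theorem \ref{Theorem17}; it is explicitly stated to be a restatement of \cite[Theorem A]{BFJ} and \cite[Theorem 5.6]{C}, and the text after the theorem only spells out what the cited proof shows. So the comparison must be with the argument in those references, which your sketch attempts to reproduce.

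Your lower-bound step is essentially the right idea (Fujita approximation of $\alpha$, nudge the nef part by $t\mu^*\gamma$, expand the intersection number), but the \emph{joint} approximation you assert --- a single sequence of nef divisors $N_m$ below $\alpha$ achieving both $N_m^d\to {\rm vol}(\alpha)$ and $N_m^{d-1}\cdot\mu_m^*\gamma\to \langle\alpha^{d-1}\rangle\cdot\gamma$ --- is not an automatic consequence of ``$\langle\alpha^{d-1}\rangle$ is a lub''; it needs the directedness and monotonicity structure of the approximating net, which is a real (if standard) step.

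The serious gap is the upper bound. You write ${\rm vol}(\alpha_t)=\langle\alpha_t^{d-1}\rangle\cdot\alpha_t=\langle\alpha_t^{d-1}\rangle\cdot\alpha+t\langle\alpha_t^{d-1}\rangle\cdot\gamma$ and claim that super-additivity and continuity finish. Continuity handles the second term, but super-additivity of the positive intersection product bounds $\langle\alpha_t^{d-1}\rangle$ \emph{from below}, which is the wrong sign for bounding ${\rm vol}(\alpha_t)$ from above. One is left needing $[\langle\alpha_t^{d-1}\rangle-\langle\alpha^{d-1}\rangle]\cdot\alpha\le (d-1)t\,\langle\alpha^{d-1}\rangle\cdot\gamma+o(t)$, and nothing in your sketch produces that. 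The genuine tool in \cite{BFJ} is the \emph{orthogonality theorem} for Fujita/Zariski approximations --- a quantitative bound on the cross-term $N^{d-1}\cdot E$ in a decomposition $\mu^*\alpha=N+E$ in terms of ${\rm vol}(\alpha)-N^d$ --- which is a substantial, independently proven estimate and is not the same thing as the identity ${\rm vol}(\alpha)=\langle\alpha^{d-1}\rangle\cdot\alpha$ of (\ref{eq40}). Calling (\ref{eq40}) the ``orthogonality identity'' and using it in place of that estimate leaves the hard half of the theorem unproved. There is also a circularity warning specific to this paper: here (\ref{eq40}) is \emph{derived from} Theorem \ref{Theorem17}, so an argument that takes (\ref{eq40}) as an input must be careful to appeal to its independent proof in \cite{BFJ} or \cite{C} rather than to (\ref{eq40}) as it appears in this paper.

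Finally, the reduction ``one-sided directional derivatives behave linearly in $\gamma$ by concavity'' is itself the conclusion (differentiability) rather than a consequence of concavity alone: concavity of ${\rm vol}^{1/d}$ gives existence of one-sided derivatives $D_\pm$ and $D_+\le D_-$, but not their linearity. The passage from nef $\gamma$ to general $\gamma=A_1-A_2$ requires combining the two one-sided estimates with the concavity/convexity of $D_+$/$D_-$ explicitly, and as written this step is asserted rather than justified.
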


This is a restatement of \cite[Theorem A]{BFJ}, \cite[Theorem 5.6]{C}. The proof shows that
$$
\lim_{\Delta t\rightarrow 0}\frac{{\rm vol}(\alpha+(t+\Delta t)\gamma)-{\rm vol}(\alpha+t\gamma)}{\Delta t}
=d\langle (\alpha+t\gamma)^{d-1}\rangle\cdot\gamma.
$$

Suppose $\alpha\in N^1(\mathcal X)$ is pseudo effective. Then we have for varieties over arbitrary fields, the formula of \cite[Corollary 3.6]{BFJ},
 \begin{equation}\label{eq40}
 \langle \alpha^d\rangle =\langle \alpha^{d-1}\rangle\cdot \alpha.
 \end{equation}
To establish this formula, first 
suppose that $\alpha$ is big. 
 Then taking the derivative at $t=0$ of $\langle(\alpha+t\alpha)^d\rangle=(1+t)^d\langle\alpha^d\rangle$,  we obtain formula (\ref{eq40}) from Theorem \ref{Theorem17}. If $\alpha$ is pseudo effective, we obtain (\ref{eq40}) by regarding  $\alpha$ as a limit of the big divisors $\alpha +tH$ where $H$ is an ample $\RR$-Cartier divisor. 
 
 The natural map $N^1(X)\rightarrow L^{d-1}(X)$ is an injection, as follows from the proof of Lemma \ref{Lemma55}. Let $WN^1(X)$ be the image of the homomorphism of $Z_{d-1}(X)\otimes\RR$ to $L^{d-1}(X)$ which associates to $D\in Z_{d-1}(X)\otimes\RR$ the natural map $(\mathcal L_1,\ldots,\mathcal L_{d-1})\mapsto (\mathcal L_1\cdot\ldots\cdot L_{d-1}\cdot D)_X$.
We have that  $WN^1(X)$ is the subspace of $L^{d-1}(X)$ generated by $\mbox{Psef}(X)$. We always have a factorization $N^1(X)\rightarrow N_{d-1}(X)\rightarrow WN^1(X)$.
    In this way we can identify the map $D\cdot$ which is the image  of an element of $Z_{d-1}(X)\otimes\RR$ in $L^{d-1}(X)$ with its class in $WN^1(X)$. If $X$ is nonsingular, then $WN^1(X)=N_{d-1}(X)=N^1(X)$.

 \begin{Lemma}\label{Lemma200} Suppose that $X$ is a projective variety and $D$ is a big $\RR$-Cartier divisor on $X$. Let $f:Y\rightarrow X\in I(X)$ be such that $Y$ is normal. Then
 \begin{equation}\label{eq91}
 \pi_Y(\langle D\rangle)=P_{\sigma}(f^*(D)).
 \end{equation}
 \end{Lemma}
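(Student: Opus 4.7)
The plan is to reduce to the case $Y=X$ normal and then to establish the equality as a pair of inequalities in $L^{d-1}(X)$, using Fujita approximation in one direction and the minimality property of the $\sigma$-decomposition in the other. First, since $f:Y\to X$ is a birational morphism with $Y$ projective, the directed sets $I(X)$ and $I(Y)$ are cofinal, so the identifications $L^{d-1}(\mathcal X)=L^{d-1}(\mathcal Y)$ and $\langle D\rangle=\langle f^*D\rangle$ hold, with $\pi_Y$ on $L^{d-1}(\mathcal X)$ going to the $Y$-component. Thus it suffices to prove the formula when $Y=X$ is itself normal, i.e.\ that $\pi_X(\langle D\rangle)=P_\sigma(D)$ in $L^{d-1}(X)$ for a big $\RR$-Cartier divisor $D$ on a normal projective variety $X$.

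For the inequality $\pi_X(\langle D\rangle)\ge P_\sigma(D)$, I would use Fujita approximation. Fix an ample $\RR$-Cartier divisor $A$ on $X$; for $\epsilon>0$, pick a sequence of birational morphisms $\mu_k:Z_k\to X$ together with decompositions $\mu_k^*(D+\epsilon A)=P_k+N_k$ with $P_k$ ample, $N_k$ effective, and $(P_k^d)\to \mathrm{vol}(D+\epsilon A)$. By Proposition \ref{Prop35} each nef class $P_k\in N^1(Z_k)$ gives an element of $L^{d-1}(\mathcal X)$ bounded above by $\langle D+\epsilon A\rangle$, so the pushforward $\mu_{k*}P_k\in WN^1(X)$ satisfies $\mu_{k*}P_k\le \pi_X\langle D+\epsilon A\rangle$. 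By Nakayama's description of the positive part of the $\sigma$-decomposition as the limit of pushed-forward nef parts in Fujita approximations (Chapter III of \cite{N}, extended to normal varieties in \cite{FKL}), $\mu_{k*}P_k\to P_\sigma(D+\epsilon A)$ in $L^{d-1}(X)$, giving $P_\sigma(D+\epsilon A)\le \pi_X\langle D+\epsilon A\rangle$. Letting $\epsilon\to 0^+$ and invoking continuity of $\langle\cdot\rangle$ on the big cone (Lemma \ref{Lemma36}) together with $\lim_{\epsilon\to 0^+}P_\sigma(D+\epsilon A)=P_\sigma(D)$ yields the desired inequality.

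For the reverse inequality, I would again use Proposition \ref{Prop35} to characterize $\langle D\rangle$ as the least upper bound of nef classes $\beta\in N^1(Z)$ on models $\mu:Z\to X\in I(X)$ with $\beta\le \mu^*D$. For each such $\beta$, its pushforward $\mu_*\beta\in WN^1(X)$ is pseudo-effective, lies in the movable cone (as a limit of nef/ample classes on models), and satisfies $\mu_*\beta\le D$ in $L^{d-1}(X)$. The minimality of $N_\sigma(D)$ in the $\sigma$-decomposition on normal varieties (\cite[Proposition III.1.14]{N}, extended in \cite{FKL}) then gives $\mu_*\beta\le P_\sigma(D)$. Passing to the supremum over all such $\beta$ gives $\pi_X\langle D\rangle\le P_\sigma(D)$, completing the proof.

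The main obstacle is the convergence statement $\mu_{k*}P_k\to P_\sigma(D+\epsilon A)$ in $L^{d-1}(X)$, which requires the normal-variety extension of Nakayama's characterization of the positive part as a limit of pushforwards of ample Fujita approximations; this is precisely what the theory of \cite{FKL} provides. Once this is in hand, both inequalities reduce to the defining supremum characterizations of $\langle D\rangle$ (via Proposition \ref{Prop35}) and of $P_\sigma(D)$ (via the minimality of $N_\sigma(D)$).
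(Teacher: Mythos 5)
Your overall strategy is the same as the paper's: reduce to $Y=X$ normal and prove $\pi_X\langle D\rangle=P_\sigma(D)$ as two inequalities, using Proposition~\ref{Prop35} (least upper bound over nef classes on models) for one direction and the maximality of $P_\sigma$ within the $\sigma$-decomposition for the other. The reverse inequality $\pi_X\langle D\rangle\le P_\sigma(D)$ is essentially identical in both proofs (for a nef $N$ on a model $Z$ with $g^*D=N+E$, $E$ effective, compare $\sigma_\Gamma$ on proper transforms to conclude $g_*E\ge N_\sigma(D)$, hence $\pi_X(N)\le P_\sigma(D)$), though the paper derives the bound $g_*E\ge N_\sigma(D)$ directly rather than appealing to movability of the pushforward $\mu_*\beta$, which itself needs justification in the normal $\RR$-Weil setting.

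The genuine gap is in your proof of $P_\sigma(D)\le\pi_X\langle D\rangle$. You rest the argument on the claim that for Fujita approximations $\mu_k^*(D+\epsilon A)=P_k+N_k$ with $P_k$ ample and $\mathrm{vol}(P_k)\to\mathrm{vol}(D+\epsilon A)$, one has $\mu_{k*}P_k\to P_\sigma(D+\epsilon A)$ in $L^{d-1}(X)$, and attribute this to Chapter~III of~\cite{N} and~\cite{FKL}. But neither reference states this as such, and it is not an immediate consequence of the definition of $P_\sigma$: volume convergence of the nef parts alone does not obviously control the pushforwards in $N^1(X)$ (one would need an argument using strict log-concavity of volume, or Lemma~\ref{Lemma10}, to rule out the pushforwards drifting away from $P_\sigma$). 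The paper sidesteps this entirely by choosing a \emph{specific} Fujita-type approximation: it takes the movable part $N_m$ of $|mD|$, resolves its base locus by $\phi_m:X_m\to X$ to get a base-point-free nef divisor $M_m$ with $\phi_m^*(mD)=M_m+\sum\sigma_{G_i}(mD)_{\ZZ}\overline G_i+F_m$ ($F_m$ $\phi_m$-exceptional), and computes directly $(\phi_m)_*(\tfrac1m M_m)=D-\sum\tfrac{\sigma_{G_i}(mD)_{\ZZ}}{m}G_i$. Convergence to $P_\sigma(D)$ is then nothing more than the definition $\sigma_{G_i}(D)=\lim_m \tfrac1m\sigma_{G_i}(mD)_{\ZZ}$. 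You should either adopt this explicit construction or supply a proof of your convergence claim; as written, the citation does not carry the weight placed on it.
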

 
 \begin{proof} We may assume that $Y=X$ so that $f^*D=D$.
After replacing $D$ with an $\RR$-Cartier divisor numerically equivalent to  $D$, we may assume that $D=\sum_{i=1}^r a_iG_i$ is an effective divisor, where $G_i$ are prime divisors and $a_i\in \RR_{> 0}$.  For $m\in \ZZ_{>0}$, write $mD=N_m+\sum_{i=1}^r\sigma_{G_i}(mD)_{\ZZ}G_i$. Then 
$|mD|=|N_m|+\sum_{i=1}^r\sigma_{G_i}(mD)_{\ZZ}G_i$ where $|N_m|$ has no codimension one components in its base locus.

There exists a birational morphism $\phi_m:X_m\rightarrow X$ such that $X_m$ is normal and is a resolution of indeterminancy of the rational map determined by $|N_m|$ on $X$. Thus 
$\phi_m^*(mD)=M_m+\sum_{i=1}^r\sigma_{G_i}(mD)_{\ZZ}\overline G_i+F_m$ where 
$M_m$ and  $F_m$ are effective, $F_m$ has exceptional support for $\phi_m$, $\overline G_i$ is the proper transform of $G_i$ on $X_m$ and 
$|\phi_m^*(mD)|=|M_m|+\sum_{i=1}^r\sigma_{G_i}(mD)_{\ZZ}\overline G_i+F_m$ where 
$|M_m|$ is base point free. Thus $M_m$ is a nef integral Cartier divisor on $X_m$. 

Set 
$D_m=\sum_{i=1}^r\frac{\sigma_{G_i}(mL)_{\ZZ}}{m}\overline G_i+\frac{F_m}{m}$, so that $D_m$ is an effective $\RR$-Cartier divisor on $X_m$.  We have that
$\frac{1}{m}M_m\le \langle D\rangle$ in $L^{d-1}(\mathcal X)$ so that
$\pi_X(\frac{1}{m}M_m)\le \pi_X\langle D\rangle$ in $L^{d-1}(X)$. Now 
$$
\begin{array}{lll}
\pi_X(\frac{1}{m}M_m)&=&(\phi_{m})_*(\frac{1}{m}M_m)
=\frac{1}{m}(\phi_m)_*((\phi_m)^*(mD)-\sum_{i=1}^r\sigma_{G_i}(mD)_{\ZZ}\overline G_i-F_m)\\
&=&D-\sum_{i=1}^r \frac{\sigma_{G_i}(mD)_{\ZZ}}{m}G_i.
\end{array}
$$
Thus 
$$
P_{\sigma}(D)=D-\sum_{i=1}^r\sigma_{G_i}(mD)G_i =\lim_{m\rightarrow \infty}(D-\sum_{i=1}^r\frac{\sigma_{G_i}(mD)_{\ZZ}}{m}G_i)\le \pi_X(\langle D\rangle)
$$
in $L^{d-1}(X)$.

  Let $Z\in I(X)$ be  normal, with birational map $g:Z\rightarrow X$ and $N$ be a nef and big $\RR$-Cartier divisor on $Z$ and $E$ be an effective $\RR$-Cartier divisor on $Z$ such that $N+E=g^*(D)$. Let $\Gamma$ be a prime divisor on $Z$. Then
 $$
 \sigma_{\Gamma}(g^*(D))\le \sigma_{\Gamma}(N)+\mbox{ord}_{\Gamma}(E)=\mbox{ord}_{\Gamma}(E).
 $$
 Thus $N_{\sigma}(g^*(D))\le E$ and so $N\le P_{\sigma}(g^*(D))$.
 
 Let $\tilde \Gamma$ be a prime divisor on $X$ and let $\Gamma$ be the proper transform of $\tilde \Gamma$ on $Z$. Then $\sigma_{\Gamma}(g^*(D))=\sigma_{\tilde\Gamma}(D)$ so that $\pi_X(N)\le P_{\sigma}(D)$ in $WN^1(X)$.
 Thus $\pi_X(\langle D \rangle)\le P_{\sigma}(D)$ in $L^{d-1}(X)$. 
 \end{proof}

Let $X$ be a projective variety and $L_1,\ldots, L_{d-1}\in N^1(X)$. Suppose that $D$ is a big and movable $\RR$-Cartier divisor on $X$. Then the intersection product in $L^0(\mathcal X)=\RR$ is 
\begin{equation}\label{eq90}
\begin{array}{lll}
L_1\cdot \ldots \cdot L_{d-1}\cdot \langle D\rangle
&=&\rho_X(L_1)\cdot\ldots\cdot \rho_X(L_{d-1})\cdot \langle D\rangle
=L_1\cdot\ldots\cdot L_{d-1}\cdot \pi_X(\langle D\rangle)\\
&=&(L_1\cdot\ldots\cdot L_{d-1}\cdot P_{\sigma}(D))_X
=(L_1\cdot\ldots\cdot L_{d-1}\cdot D)_X
\end{array}
\end{equation}

\subsection{Volume of divisors}  Suppose that $X$ is a $d$-dimensional projective variety over a field $k$ and $D$ is a Cartier divisor on $X$. 
The volume of $D$ is (\cite[Definition 2.2.31]{L})
$$
{\rm vol}(D)=\limsup_{n\rightarrow \infty}\frac{\dim_k(\Gamma(X,\mathcal O_X(nD))}{n^d/d!}.
$$
 This lim sup is actually a limit. When $k$ is an algebraically closed field of characteristic zero, this is shown in Example 11.4.7 \cite{L}, as a consequence of   Fujita Approximation \cite{F2} (c.f. Theorem 10.35 \cite{L}). The limit is established in 
 \cite{LM} and  \cite{T} when $k$ is algebraically closed of arbitrary characteristic. A proof over an arbitrary field is given in  \cite[Theorem 10.7]{C1}.
 
  Since ${\rm vol}$ is a homogeneous function, it extends naturally to a function on 
$\QQ$-divisors, and it extends to a continuous function on $N^1(X)$ (\cite[Corollary 2.2.45]{L}), giving the volume of an arbitrary $\RR$-Cartier divisor. 

We have (\cite[Theorem 3.1]{BFJ}, \cite[Theorems 5.2 and 5.3]{C}) that for a pseudo effective $\RR$-Cartier divisor $D$ on $X$, 
\begin{equation}\label{eq44}
{\rm vol}(D)=\langle D^d\rangle.
\end{equation}

Further, we have by  \cite[Theorem 3.5]{FKL} that for an arbitrary $\RR$-Cartier divisor $D$ (or even an $\RR$-Weil divisor) on a normal variety $X$, that
$$
{\rm vol}(D)=\lim_{n\rightarrow \infty}\frac{\dim_k(\Gamma(X,\mathcal O_X(nD))}{n^d/d!}.
$$
Thus ${\rm vol}(D)={\rm vol}(P_{\sigma}(D))$ and so if $P_{\sigma}(D)$ is $\RR$-Cartier, then ${\rm vol}(D)=\langle P_{\sigma}(D)^d\rangle$.

\begin{Lemma} Suppose that $L$ is an $\RR$-Cartier divisor on a $d$-dimensional projective variety $X$ over a field $k$, $Y$ is a projective variety and $\phi:Y\rightarrow X$ is a generically finite morphism. Then
\begin{equation}\label{eq43}
{\rm vol}(\phi^*L)=\deg(Y/X)\,{\rm vol}(L).
\end{equation}
\end{Lemma}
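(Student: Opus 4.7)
The plan is to reduce to the case where $L$ is a big integral Cartier divisor and then prove two inequalities separately: the lower bound via Fujita approximation transported to $Y$, and the upper bound via the projection formula applied to the asymptotic growth of sections. By homogeneity and continuity of $\mathrm{vol}$ on $N^1(X)$, it is enough to treat integral Cartier $L$. If $L$ is not big, the sheaf-theoretic estimate described below will also yield that $\phi^*L$ is not big, so both sides vanish; assume $L$ is big.

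For $\mathrm{vol}(\phi^*L)\ge \deg(Y/X)\,\mathrm{vol}(L)$, the plan is to apply Fujita approximation on $X$. Given $\epsilon>0$, choose a projective birational morphism $\pi:X'\to X$ and a $\QQ$-decomposition $\pi^*L\sim_{\QQ} A+E$ with $A$ nef and big, $E$ effective, and $(A^d)>\mathrm{vol}(L)-\epsilon$. Resolving an irreducible component of $Y\times_X X'$ dominating $Y$ produces a projective $Y'$ with a birational $\pi':Y'\to Y$ and a generically finite $\phi':Y'\to X'$ of degree $e:=\deg(Y/X)$. Then $\phi'^*A$ is nef and big on $Y'$, and $\phi'^*A\le \pi'^*\phi^*L$ modulo an effective divisor; by the projection formula for Snapper intersection numbers under generically finite morphisms (cf.\ \cite[Example 18.3.6, Proposition 2.3]{F}), $((\phi'^*A)^d)_{Y'}=e\,(A^d)_{X'}$. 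Since volume is a birational invariant, $\mathrm{vol}(\phi^*L)=\mathrm{vol}(\pi'^*\phi^*L)\ge ((\phi'^*A)^d)>e(\mathrm{vol}(L)-\epsilon)$, and letting $\epsilon\to 0$ gives the inequality.

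For the reverse bound, the plan is to use the projection formula at the level of sheaves: since $\phi^*\mathcal{O}_X(nL)=\mathcal{O}_Y(n\phi^*L)$, one has $\phi_*\mathcal{O}_Y(n\phi^*L)\cong \phi_*\mathcal{O}_Y\otimes \mathcal{O}_X(nL)$, and therefore
$$
h^0(Y,n\phi^*L)=h^0(X,\phi_*\mathcal{O}_Y\otimes \mathcal{O}_X(nL)).
$$
The coherent sheaf $\mathcal{F}:=\phi_*\mathcal{O}_Y$ has generic rank $e$. A d\'evissage filters $\mathcal{F}$ by coherent subsheaves whose successive quotients are either supported in dimension $<d$ (contributing $O(n^{d-1})$ to $h^0$) or torsion-free rank-one sheaves on $X$, with exactly $e$ of the latter type. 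Each such rank-one piece $\mathcal{G}$ agrees in codimension one with a line bundle $\mathcal{L}_\mathcal{G}$ on (the normalization of) $X$, so $h^0(X,\mathcal{G}\otimes\mathcal{O}_X(nL))=\mathrm{vol}(L)\,n^d/d!+o(n^d)$ since twist by a fixed line bundle leaves the leading coefficient unchanged. Summing yields $\mathrm{vol}(\phi^*L)\le e\,\mathrm{vol}(L)$, completing the argument.

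The main obstacle is the d\'evissage step: one must control the $h^0$-asymptotics of arbitrary torsion-free rank-one coherent sheaves on a possibly non-normal $X$. The cleanest way around this is to pass to the normalization $X^\nu\to X$ (which does not affect $\mathrm{vol}(L)$ nor the degree of $\phi$), where any torsion-free rank-one sheaf is a subsheaf of its reflexive hull, a divisorial sheaf whose twist by $nL$ has the same leading $h^0$-coefficient as $\mathcal{O}_X(nL)$.
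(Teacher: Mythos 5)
Your proposal takes a genuinely different route from the paper. The paper proves the equality in one stroke by producing a three-term exact sequence
\[
0\longrightarrow \bigoplus_{i=1}^r\mathcal O_X(d_i)\longrightarrow \phi_*\mathcal O_Y\longrightarrow \widetilde F\longrightarrow 0,
\]
with $r=\deg(Y/X)$ and $\widetilde F$ supported in dimension $<d$. The construction is elementary: take a $k(X)$-basis of the generic stalk of $\phi_*\mathcal O_Y$, clear denominators to get a graded-module map $\bigoplus R(-d_i)\to M$ over the homogeneous coordinate ring $R$ of $X$, observe the kernel is zero (it is a torsion submodule of a torsion-free module) and the cokernel is torsion. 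Tensoring with $\mathcal O_X(nL)$ and applying the projection formula then yields \emph{both} the lower and upper bounds simultaneously, after one further observation that twisting by the fixed very ample line bundle $\mathcal O_X(d_i)$ does not change the leading coefficient of $n\mapsto h^0(X,\mathcal O_X(d_i)\otimes L^n)$. Crucially, Fujita approximation is never invoked. Your upper bound is in the same spirit as the paper's sheaf argument (both work with $\phi_*\mathcal O_Y$ via the projection formula), but your d\'evissage into torsion-free rank-one quotients is heavier than needed, and the remark about rank-one sheaves ``agreeing in codimension one with line bundles'' is only valid after passing to the normalization, which you acknowledge but do not fully carry out (one must also check that $\phi$ factors through $X^\nu$ and that degrees and volumes are preserved). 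These are repairable.

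The real gap is in your lower bound. You appeal to Fujita approximation: a birational $\pi:X'\to X$ and a $\QQ$-decomposition $\pi^*L\sim_\QQ A+E$ with $A$ nef and big, $E\geq 0$, and $(A^d)>\mathrm{vol}(L)-\epsilon$. The lemma is stated over an arbitrary field $k$, but Fujita approximation for Cartier divisors is known in characteristic zero (Fujita, Lazarsfeld) and over perfect fields of positive characteristic (Takagi \cite{T}); the paper itself makes a point of this constraint in the discussion preceding Proposition \ref{Prop1}, precisely because \cite{FKL} is limited to perfect fields for this reason. Over a non-perfect field the existence of such approximating decompositions is not established, so your lower-bound argument does not cover the full generality claimed. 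The cleanest fix is to discard the Fujita step entirely and instead obtain the lower bound from the same sheaf-theoretic input as your upper bound: the injection $\bigoplus\mathcal O_X(d_i)\hookrightarrow\phi_*\mathcal O_Y$ with torsion cokernel (the paper's construction) directly gives $r\,\mathrm{vol}(L)\leq \mathrm{vol}(\phi^*L)$, and then the torsion bound on the cokernel gives the reverse inequality, with no approximation theorem required.
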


\begin{proof} 
First assume that $L$ is a Cartier divisor. 
The sheaf $\phi_*\mathcal O_Y$ is a coherent sheaf of $\mathcal O_X$-modules. Let $R$ be the coordinate ring of $X$ with respect to some closed embedding of $X$ in a projective space. Then $R=\oplus_{i\ge 0}R_i$ is a standard graded domain over $R_0$, and $R_0$ a finite extension field of $k$. There exists a finitely generated graded $R$-module $M$ such that the sheafication $\tilde M$ of $M$ is isomorphic to $\phi_*\mathcal O_Y$ (by \cite[Proposition II.5.15 and Exercise II.5.9]{H} or \cite[Theorem 11.46]{AG}).
Let $S$ be the multiplicative set of nonzero homogeneous elements of $R$ and $\eta$ be the generic point of $X$. The ring $R_{(0)}$ is the set of homogeneous elements of degree 0 in the localization $S^{-1}R$ and the $R_{(0)}$-module $M_{(0)}$
 is the set of homogeneous elements of degree 0 in the localization $S^{-1}M$.
The function field of $X$ is
$k(X)=\mathcal O_{X,\eta} =R_{(0)}$ and $(\phi_*\mathcal O_Y)_{\eta}=M_{(0)}$ is a $k(X)$-vector space of rank $r=\deg(Y/X)$.
Let $f_1,\ldots,f_r\in M_{(0)}$ be a $k(X)$-basis. Write $f_i=\frac{z_i}{s_i}$ where $z_i \in M$ is homogeneous  of some degree $d_i$ and $s_i\in R$ is homogeneous of degree $d_i$.  Multiplication by $z_i$ induces a degree 0 graded $R$-module homomorphism $R(-d_i)\rightarrow M$ giving us  a degree 0 graded $R$-module  homomorphism
$\oplus_{i=1}^rR(-d_i)\rightarrow M$.  Let $K$ be the kernel of this homomorphism and $F$ be the cokernel. Let $\tilde K$ be the sheafification of $K$ and $\tilde F$ be the sheafification of $F$. We have a short exact sequence of coherent $\mathcal O_X$-modules
$0\rightarrow \tilde K\rightarrow \oplus_{i=1}^r\mathcal O_X(d_i)\rightarrow \pi_*\mathcal O_Y\rightarrow \tilde F\rightarrow 0$. Localizing at the generic point, we see that $\tilde K_{\eta}=0$ and $\tilde F_{\eta}=0$ so that the supports of $\tilde K$ and $\tilde F$ have dimension less than $\dim X$, and thus $K=0$ since it is a submodule of a torsion free $R$-module.  Tensoring the short exact sequence 
$0\rightarrow \oplus_{i=1}^r\mathcal O_X(d_i)\rightarrow \pi_*\mathcal O_Y\rightarrow \tilde F\rightarrow 0$ with $L^n$, we see that 
$$
{\rm vol}(\phi^*L)=\lim_{n\rightarrow \infty}\frac{\dim_k\Gamma(Y,\phi^*L^n)}{n^d/d!}=\lim_{n\rightarrow\infty}\frac{
\dim_k(\oplus_{n=1}^r\Gamma(X,\mathcal O_X(d_i)\otimes L^n))}{n^d/d!}=\deg(Y/X)\,{\rm vol}(L).
$$ 
Since volume is homogeneous, (\ref{eq43}) is valid for $\QQ$-Cartier divisors, and since volume is continuous on $N^1(X)$ and $N^1(Y)$, (\ref{eq43}) is valid for $\RR$-Cartier divisors.

\end{proof}

\section{A theorem on volumes}
In this section we generalize \cite[Theorem 4.2]{C2}. The proof given here is a variation of the one given in \cite{C2}, using the theory of divisorial Zariski decomposition of $\RR$-Weil divisors on normal varieties of \cite{FKL}.
Let $X$ be a $d$-dimensional normal projective variety over a field $k$. Suppose that $D$ is a big $\RR$-Weil divisor  on $X$. Let $E$ be a codimension one prime divisor on $X$. In \cite[Lemma 4.1]{FKL}    the function $\sigma_E$ of Subsection \ref{Subsecsigma} is generalized to give the following definition (\cite[Lemma 4.1]{FKL})
$$
\sigma_E(D)=\lim_{m\rightarrow\infty}\min \frac{1}{m}\{\mbox{mult}_ED'\mid D'\sim_{\ZZ} mD, D'\ge 0\}.
$$
Suppose that $D$ is a big $\RR$-Weil divisor and $E_1,\ldots,E_r$ are distinct prime divisors on $X$. Then by \cite[Lemma 4.1]{FKL}, for all $m\in \NN$,
\begin{equation}\label{eq70} 
\Gamma(X,\mathcal O_X(mD))=\Gamma(X,\mathcal O_X(mD-\sum_{i=1}^rm\sigma_{E_i}(D)E_i)).
\end{equation}





We now recall the method of \cite{LM} to compute volumes of graded linear series on $X$, as extended in \cite{C2} to arbitrary fields. We restrict to the situation of our immediate interest; that is,  $D$ is a big $\RR$-Weil divisor and $H$ is an ample Cartier divisor on $X$ such that $D\le H$. 

Suppose that $p\in X$ is a nonsingular closed point and 
\begin{equation}\label{eqGR2}
X=Y_0\supset Y_1\supset \cdots \supset Y_d=\{p\}
\end{equation}
is a flag; that is, the $Y_i$ are subvarieties of $X$ of dimension $d-i$ such that there is a regular system of parameters $b_1,\ldots,b_d$ in $\mathcal O_{X,p}$ such that  $b_1=\cdots=b_i=0$ are local equations of $Y_i$ in $X$ for $1\le i\le d$. 

The flag determines a valuation $\nu$ on the function field $k(X)$ of $X$  as follows.  We have a sequence of natural surjections of regular local rings
\begin{equation}\label{eqGR3} \mathcal O_{X,p}=
\mathcal O_{Y_0,p}\overset{\sigma_1}{\rightarrow}
\mathcal O_{Y_1,p}=\mathcal O_{Y_0,p}/(b_1)\overset{\sigma_2}{\rightarrow}
\cdots \overset{\sigma_{d-1}}{\rightarrow} \mathcal O_{Y_{d-1},p}=\mathcal O_{Y_{d-2},p}/(b_{d-1}).
\end{equation}
Define a rank $d$ discrete valuation $\nu$ on $k(X)$   by prescribing for $s\in \mathcal O_{X,p}$,
$$
\nu(s)=({\rm ord}_{Y_1}(s),{\rm ord}_{Y_2}(s_1),\cdots,{\rm ord}_{Y_d}(s_{d-1}))\in (\ZZ^d)_{\rm lex}
$$
where 
$$
s_1=\sigma_1\left(\frac{s}{b_1^{{\rm ord}_{Y_1}(s)}}\right),
s_2=\sigma_2\left(\frac{s_1}{b_2^{{\rm ord}_{Y_2}(s_1)}}\right),\ldots,
s_{d-1}=\sigma_{d-1}\left(\frac{s_{d-2}}{b_{d-1}^{{\rm ord}_{Y_{d-1}}(s_{d-2})}}\right).
$$

Let $g=0$ be a local equation of $H$ at $p$. For $m\in \NN$, define
$$
\Phi_{mD}:\Gamma(X,\mathcal O_X(mD))\rightarrow \NN^d
$$
by $\Phi_{mD}(f)=\nu(fg^m)$.  The Okounkov body $\Delta(D)$ of $D$ is the closure of the set 
$$
\cup_{m\in \NN}\frac{1}{m}\Phi_{mD}(\Gamma(X,\mathcal O_X(mD)))
$$
in $\RR^d$. 
$\Delta(D)$  is a compact and convex set by \cite[Lemma 1.10]{LM} or the proof of \cite[Theorem 8.1]{C1}.

By the proof of \cite[Theorem 8.1]{C1} and of \cite[Lemma 5.4]{C3} we see that
\begin{equation}\label{GR4}
{\rm Vol}(D)=\lim_{m\rightarrow \infty}\frac{\dim_k\Gamma(X,\mathcal O_X(mD))}{m^d/d!}
=d![\mathcal O_{X,p}/m_p:k]{\rm Vol}(\Delta(D)).
\end{equation}


The following proposition is proven with the assumption that the ground field $k$ is perfect in i) implies ii) of Theorem B in \cite{FKL}. The assumption that $k$ is perfect is required in their proof as they use 
\cite{T}, which  proves that a Fujita approximation exists 
to compute the volume of a Cartier divisor when the ground field is perfect. The theorem of \cite{dJ} is used in \cite{FKL} to conclude that a separable alteration exists if the ground field $k$ is perfect.

\begin{Proposition}\label{Prop1} Suppose that $X$ is a normal projective variety over a field $k$
 and $D_1,D_2$ are big $\RR$-Weil divisors on   $X$ such that   $D_1\le D_2$ and ${\rm Vol}(D_1)={\rm Vol}(D_2)$. 
 Then
$$
\Gamma(X,\mathcal O_X(nD_1))=\Gamma(X,\mathcal O_X(nD_2))
$$
for all $n\in \NN$.
\end{Proposition}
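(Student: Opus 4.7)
The plan is to reduce the proposition to a pointwise claim about $\sigma$ and then apply (\ref{eq70}). Let $E_1,\ldots,E_r$ be the finitely many prime divisors in the support of the effective $\RR$-Weil divisor $D_2-D_1$. I claim it suffices to show
$\sigma_{E_i}(D_2)\ge \mbox{mult}_{E_i}(D_2-D_1)$ for $i=1,\ldots,r$.
Indeed, (\ref{eq70}) applied to $D_2$ with these primes gives
$\Gamma(X,\mathcal O_X(nD_2)) = \Gamma(X,\mathcal O_X(nD_2-\sum_{i=1}^r n\sigma_{E_i}(D_2)E_i))$,
while the claimed inequalities yield $nD_2-\sum_i n\sigma_{E_i}(D_2)E_i \le nD_2-n(D_2-D_1)=nD_1$ as $\RR$-Weil divisors; since $E\le F$ implies $\Gamma(X,\mathcal O_X(E))\subseteq \Gamma(X,\mathcal O_X(F))$, this gives $\Gamma(X,\mathcal O_X(nD_2))\subseteq \Gamma(X,\mathcal O_X(nD_1))$, and the reverse inclusion is immediate from $D_1\le D_2$.

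To establish the key inequality, fix $E=E_i$ and set up Okounkov bodies for $D_1$ and $D_2$ with a flag whose first member is $E$. Since $X$ is normal and hence regular in codimension one, the smooth locus of $X$ meets $E$ in a dense open subset; pick a nonsingular closed point $p\in X$ with $p\in E$. Then $E$ is Cartier near $p$, so a local equation $b_1$ of $E$ at $p$ extends to a regular system of parameters $b_1,\ldots,b_d\in\mathcal O_{X,p}$, giving a flag as in (\ref{eqGR2}) with $Y_1=E$. Fix an ample Cartier divisor $H$ with $H\ge D_2$, let $g$ be a local equation of $H$ at $p$, and form the Okounkov bodies $\Delta(D_1),\Delta(D_2)\subset\RR^d$ with respect to this common data.

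Since $mD_1\le mD_2$, we have $\Gamma(X,\mathcal O_X(mD_1))\subseteq \Gamma(X,\mathcal O_X(mD_2))$ for every $m$, and the valuation and base point are the same, so $\Delta(D_1)\subseteq\Delta(D_2)$. The hypothesis ${\rm Vol}(D_1)={\rm Vol}(D_2)$ combined with (\ref{GR4}) gives ${\rm Vol}_{\RR^d}(\Delta(D_1))={\rm Vol}_{\RR^d}(\Delta(D_2))$. Both are compact convex bodies of positive volume, so inclusion plus equality of Euclidean volume forces $\Delta(D_1)=\Delta(D_2)$ (otherwise $\Delta(D_2)\setminus\Delta(D_1)$ would contain a nonempty open subset of $\RR^d$, contradicting the equality of Lebesgue measures).

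Finally, I compute the infimum of the first coordinate on $\Delta(D_i)$. For nonzero $f\in\Gamma(X,\mathcal O_X(mD_i))$, the first coordinate of $\nu(fg^m)$ equals $\mbox{ord}_E(f)+m\,\mbox{mult}_E(H)$, and as $f$ varies, $D':=mD_i+\mbox{div}(f)$ ranges over all effective $\RR$-Weil divisors with $D'\sim_\ZZ mD_i$, with $\mbox{mult}_E(D')=\mbox{ord}_E(f)+m\,\mbox{mult}_E(D_i)$. By the definition of $\sigma_E$ recalled from \cite{FKL}, the quantity $\min_f\mbox{mult}_E(D')/m$ tends to $\sigma_E(D_i)$ as $m\to\infty$, so the infimum of the first coordinate on $\Delta(D_i)$ equals $\sigma_E(D_i)+\mbox{mult}_E(H)-\mbox{mult}_E(D_i)$. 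From $\Delta(D_1)=\Delta(D_2)$ we deduce
$$
\sigma_E(D_2) = \sigma_E(D_1) + \mbox{mult}_E(D_2-D_1) \ge \mbox{mult}_E(D_2-D_1),
$$
since $\sigma_E(D_1)\ge 0$. The main technical point is the identification of the minimum first coordinate of $\Delta(D_i)$ with $\sigma_E(D_i)+\mbox{mult}_E(H)-\mbox{mult}_E(D_i)$ for $\RR$-Weil divisors on a normal (possibly singular) variety, which is precisely what the Okounkov-body setup of this section, built on \cite{LM}, \cite{C2}, and \cite{FKL}, provides.
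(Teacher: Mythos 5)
Your proof is correct and follows essentially the same strategy as the paper: choose a flag whose first member is $E_i$, read off the infimum of the first coordinate of the Okounkov body as $\sigma_{E_i}(D_j)+\operatorname{ord}_{E_i}(H-D_j)$, deduce $\Delta(D_1)=\Delta(D_2)$ from the equal volumes, and conclude via (\ref{eq70}). The only cosmetic difference is that the paper derives the full identity $D_2-\sum\sigma_{E_i}(D_2)E_i = D_1-\sum\sigma_{E_i}(D_1)E_i$ and applies (\ref{eq70}) to both $D_1$ and $D_2$, whereas you apply it once to $D_2$ and get the reverse containment for free from $D_1\le D_2$ -- a marginally leaner endgame of the same argument.
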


\begin{proof} Write $D_2=D_1+\sum_{i=1}^r a_iE_i$  where the $E_i$ are prime divisors on $X$ and 
 $a_i\in \RR_{>0}$ for all $i$. Let $H$ be an ample Cartier divisor on $X$ such that $D_2\le H$. 

 For each $i$ with $1\le i\le r$ choose a flag (\ref{eqGR2})
 with $Y_1=E_i$ and $p$ a point such that $p\in X$ is a nonsingular closed point of $X$ and $E_i$ and  $p\not\in E_j$ for $j\ne i$.  Let 
$\pi_1:\RR^d\rightarrow \RR$ be the projection onto the first factor.   For $f\in \Gamma(X,\mathcal O_X(mD_j))$,
$$
\frac{1}{m}\mbox{ord}_{E_i}(fg^m)=\frac{1}{m}\mbox{ord}_{E_i}((f)+mD_j)+\mbox{ord}_{E_i}(H-D_j).
$$
Thus
$$
\pi_1^{-1}(\sigma_{E_i}(D_j)+\mbox{ord}_{E_i}(H-D_j))\cap \Delta(D_j)\ne \emptyset
$$
 and
 $$
 \pi_1^{-1}(a)\cap \Delta(D_j)=\emptyset\mbox{ if }a<\sigma_{E_i}(D_j)+\mbox{ord}_{E_i}(H-D_j).
 $$
  
  Further, $\Delta(D_1)\subset \Delta(D_2)$ and ${\rm Vol}(D_1)={\rm Vol}(D_2)$, so $\Delta(D_1)=\Delta(D_2)$ by Lemma \cite[Lemma 3.2]{C2}.
  Thus
$$
\sigma_{E_i}(D_1)+\mbox{ord}_{E_i}(H-D_1)=\sigma_{E_i}(D_2)+\mbox{ord}_{E_i}(H-D_2)
$$
for $1\le i\le r$. We  obtain that
$$
D_2-\sum_{i=1}^r\sigma_{E_i}(D_2)E_i=D_1-\sum_{i=1}^r\sigma_{E_i}(D_1)E_i.
$$
By (\ref{eq70}), for all $m\ge 0$,
$$
\Gamma(X,\mathcal O_X(mD_1))=\Gamma(X,\mathcal O_X(mD_2)).
$$
\end{proof}

\begin{Lemma}\label{Lemma2}
Suppose that $X$ is a nonsingular projective variety and $D_1\le D_2$ are big $\RR$-divisors on $X$. Then the following are equivalent
\begin{enumerate}
\item[1)] ${\rm vol}(D_1)={\rm vol}(D_2)$
\item[2)] $\Gamma(X,\mathcal O_X(nD_1))=\Gamma(X,\mathcal O_X(nD_2))$ for all $n\in \NN$
\item[3)]  $P_{\sigma}(D_1)=P_{\sigma}(D_2)$.
\end{enumerate}
\end{Lemma}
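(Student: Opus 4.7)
The plan is to prove the chain of implications $1)\Rightarrow 2)\Rightarrow 3)\Rightarrow 1)$ (with $2)\Rightarrow 1)$ being essentially immediate from the volume formula, giving the full equivalence). Since $X$ is nonsingular it is in particular normal, so the machinery of Subsection \ref{subsecnorm}, of Proposition \ref{Prop1}, and of equation (\ref{eq70}) is all available.

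For $1)\Rightarrow 2)$, I invoke Proposition \ref{Prop1} directly, whose hypotheses are exactly $D_1\le D_2$ big $\RR$-Weil divisors on a normal projective variety with equal volumes. The reverse implication $2)\Rightarrow 1)$ is immediate from the formula ${\rm vol}(D)=\lim_{n\to\infty}\dim_k\Gamma(X,\mathcal O_X(nD))/(n^d/d!)$, valid for $\RR$-Weil divisors on a normal $X$ by \cite[Theorem 3.5]{FKL} quoted earlier.

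For $2)\Rightarrow 3)$, I would refine the argument inside the proof of Proposition \ref{Prop1}. Write $D_2-D_1=\sum_{i=1}^r a_iE_i$ with $a_i>0$, and fix an ample Cartier $H$ with $D_2\le H$. For \emph{any} prime divisor $E$ on $X$ choose a flag as in (\ref{eqGR2}) with $Y_1=E$ and $p\notin E'$ for any other prime divisor appearing in the support of $D_2-D_1$ or (crucially) in the finite support of $N_\sigma(D_1)+N_\sigma(D_2)$. Repeating the Okounkov-body argument of Proposition \ref{Prop1}, the equality $\Delta(D_1)=\Delta(D_2)$ yields $\sigma_E(D_1)+{\rm ord}_E(H-D_1)=\sigma_E(D_2)+{\rm ord}_E(H-D_2)$ for every prime $E$. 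Since ${\rm ord}_E(H-D_2)={\rm ord}_E(H-D_1)-{\rm mult}_E(D_2-D_1)$, this rewrites as $\sigma_E(D_2)=\sigma_E(D_1)+{\rm mult}_E(D_2-D_1)$. Summing over the (finitely many) primes with $\sigma_E(D_j)>0$ gives $N_\sigma(D_2)=N_\sigma(D_1)+(D_2-D_1)$ as $\RR$-divisors, hence $P_\sigma(D_1)=D_1-N_\sigma(D_1)=D_2-N_\sigma(D_2)=P_\sigma(D_2)$.

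For $3)\Rightarrow 1)$, I apply (\ref{eq70}) to the finite set of prime divisors appearing in the support of $N_\sigma(D_j)$: this gives $\Gamma(X,\mathcal O_X(mD_j))=\Gamma(X,\mathcal O_X(mP_\sigma(D_j)))$ for $j=1,2$, and the hypothesis $P_\sigma(D_1)=P_\sigma(D_2)$ then yields 2), which already implies 1). Alternatively, one could cite directly the equality ${\rm vol}(D)={\rm vol}(P_\sigma(D))$ noted after the volume discussion. The main subtlety, and the only genuine work, lies in the $2)\Rightarrow 3)$ step: one must verify that the flag argument of Proposition \ref{Prop1} applies uniformly to every prime divisor, not just those in the support of $D_2-D_1$, which is accomplished by choosing the flag base point $p$ off of the relevant exceptional primes.
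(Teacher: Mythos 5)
Your proof is correct, and the steps $1)\Rightarrow 2)$ and $3)\Rightarrow 1)$ coincide with the paper's: Proposition~\ref{Prop1} for the first, and (\ref{eq70}) together with $\mathrm{vol}(D)=\mathrm{vol}(P_\sigma(D))$ for the second. The genuinely different step is $2)\Rightarrow 3)$, where you re-run the Okounkov-body machinery of Proposition~\ref{Prop1}, one flag per prime divisor $E$, to extract the relation $\sigma_E(D_2)=\sigma_E(D_1)+\mathrm{mult}_E(D_2-D_1)$. This works, but it is considerably heavier than the paper's argument, which simply observes that equality of section spaces gives, for every $n$, a bijection $\Delta'\mapsto \Delta'+n(D_2-D_1)$ from $|nD_1|$ onto $|nD_2|$; comparing $\mathrm{mult}_\Gamma$ on both sides immediately yields $\sigma_\Gamma(D_2)=\sigma_\Gamma(D_1)+\mathrm{ord}_\Gamma(D_2-D_1)$ for every prime $\Gamma$, whence $P_\sigma(D_1)=P_\sigma(D_2)$ by subtraction, with no flags, base points, or convex bodies needed. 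Your version also has to re-establish that $\Delta(D_1)=\Delta(D_2)$ before the flag argument can be applied (this does follow at once from $\Phi_{mD_1}=\Phi_{mD_2}$ on the common section space, or via $2)\Rightarrow 1)$ and \cite[Lemma 3.2]{C2}), and the requirement that $p$ avoid $\mathrm{Supp}(N_\sigma(D_1)+N_\sigma(D_2))$ is unneeded, since the first-coordinate computation in the proof of Proposition~\ref{Prop1} only involves $Y_1=E$. Roughly speaking, the Okounkov-body approach is the right tool when the input is only the volume equality $1)$, but under the stronger hypothesis $2)$ the relation among the $\sigma_\Gamma$'s is available directly from the definition of $\sigma_\Gamma$ as an infimum over a linear system.
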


\begin{proof} 1) implies 2) is Proposition \ref{Prop1}. We now assume 2) holds and prove 3). Then 
$|nD_2|=|nD_1|+n(D_2-D_1)$ for all $n\ge 0$. Thus
$$
\sigma_{\Gamma}(D_2)=\sigma_{\Gamma}(D_1)+\mbox{ord}_{\Gamma}(D_2-D_1),
$$
and so
$$
\begin{array}{lll}
P_{\sigma}(D_2)&=&D_2-N_{\sigma}(D_2)=D_1+(D_2-D_1)-(N_{\sigma}(D_1)+D_2-D_1)\\
&=& D_1-N_{\sigma}(D_1)=P_{\sigma}(D_1).
\end{array}
$$
Finally, we prove 3) implies 1). Suppose that $P_{\sigma}(D_1)=P_{\sigma}(D_2)$. Then
$$
{\rm vol}(D_1)={\rm vol}(P_{\sigma}(D_1))={\rm vol}(P_{\sigma}(D_2))={\rm vol}(D_2)
$$
by (\ref{eq70}).
\end{proof}

\section{The Augmented Base Locus}


Let $X$ be a normal variety over a field. Let $D$ be a big $\RR$-Cartier divisor on $X$.  The augmented base locus $B_+(D)$ is defined in \cite[Definition 1.2]{ELM} and extended to $\RR$-Weil divisors in \cite[Definition 5.1]{FKL}. $B_+^{\rm div}(D)$ is defined to be the divisorial part of $B_+(D)$. It is shown in \cite[Proposition 1.4]{ELM} that if $D_1$ and $D_2$ are big $\RR$-Cartier divisors and $D_1\equiv D_2$ then $B_+(D_1)=B_+(D_2)$. In \cite[Lemma 5.3]{FKL}, it is shown that if $A$ is an ample $\RR$-Cartier divisor on $X$, then 
\begin{equation}\label{eq61}
B_+^{\rm div}(D)=\mbox{Supp}(N_{\sigma}(D-\epsilon A))
\end{equation}
for all sufficiently small positive $\epsilon$.

The following Lemma is  $i)$ equivalent to $ii)$  of  \cite[Theorem B]{FKL}, in the case that $X$ is nonsingular, over an arbitrary field. We use Lemma \ref{Lemma2} to remove the assumption in \cite[Theorem B]{FKL} that the ground field is perfect.   

\begin{Lemma}\label{Lemma60} Let $X$ be a nonsingular projective variety over a field. Let $D$ be a big $\RR$-divisor on $X$ and $E$ be an effective $\RR$-divisor. Then ${\rm vol}(D+E)={\rm vol}(D)$ if and only if $\mbox{Supp}(E)\subset B_+^{\rm div}(D)$.
\end{Lemma}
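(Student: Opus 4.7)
The plan is to use Lemma \ref{Lemma2} to translate the volume equality into an equality of $\sigma$-decomposition positive parts, and then relate that statement to ${\rm Supp}(E) \subset B_+^{\rm div}(D)$ via the characterization (\ref{eq61}). By Lemma \ref{Lemma2} applied to the pair $D \le D+E$ of big $\RR$-divisors, ${\rm vol}(D+E) = {\rm vol}(D)$ is equivalent to $P_\sigma(D) = P_\sigma(D+E)$, i.e.\ to the system $\sigma_\Gamma(D+E) = \sigma_\Gamma(D) + {\rm ord}_\Gamma(E)$ for every prime divisor $\Gamma$ on $X$.

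For the ``only if'' direction, I would first upgrade ${\rm vol}(D+E) = {\rm vol}(D)$ to ${\rm vol}(D+tE) = {\rm vol}(D)$ for all $t \in [0,1]$, combining monotonicity of ${\rm vol}$ under addition of effective divisors with the concavity of ${\rm vol}^{1/d}$ on the big cone (Theorem \ref{Ineq+}(4)). Theorem \ref{Theorem17} then forces the derivative $d\langle (D+tE)^{d-1}\rangle \cdot E$ to vanish for $t \in (0,1)$. Expanding $E = \sum_\Gamma e_\Gamma \Gamma$ and using that each $\langle (D+tE)^{d-1}\rangle \cdot \Gamma \ge 0$, one concludes $\langle (D+tE)^{d-1}\rangle \cdot \Gamma = 0$ for every prime $\Gamma \subset {\rm Supp}(E)$; letting $t \to 0^+$ and invoking continuity of the positive intersection product on the big cone (Lemma \ref{Lemma36}) yields $\langle D^{d-1}\rangle \cdot \Gamma = 0$. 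Finally, via Proposition \ref{Prop35} and Lemma \ref{Lemma200}, this vanishing rules out the existence of an ample decomposition $D \equiv A + N$ of $D$ with $\Gamma \not\subset {\rm Supp}(N)$, so $\Gamma \subset B_+^{\rm div}(D)$.

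For the ``if'' direction, assume ${\rm Supp}(E) \subset B_+^{\rm div}(D)$. By (\ref{eq61}) applied to any ample $\RR$-Cartier $A$, for each sufficiently small $\epsilon > 0$ there is $\lambda = \lambda(\epsilon) > 0$ with $\lambda E \le N_\sigma(D-\epsilon A)$ as effective divisors. The decomposition
\[
D - \epsilon A - \lambda E = P_\sigma(D-\epsilon A) + \bigl(N_\sigma(D-\epsilon A) - \lambda E\bigr)
\]
exhibits $D - \epsilon A - \lambda E$ as an effective divisor dominating $P_\sigma(D-\epsilon A)$, hence ${\rm vol}(D - \epsilon A - \lambda E) \ge {\rm vol}(P_\sigma(D-\epsilon A)) = {\rm vol}(D - \epsilon A)$; the reverse inequality is monotonicity, so ${\rm vol}(D-\epsilon A - \lambda E) = {\rm vol}(D - \epsilon A)$. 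Lemma \ref{Lemma2} then yields $P_\sigma(D - \epsilon A - \lambda E) = P_\sigma(D - \epsilon A)$, and I plan to promote these perturbation-level equalities to $P_\sigma(D+E) = P_\sigma(D)$ by letting $\epsilon \to 0$, combined with the concavity and continuity of ${\rm vol}^{1/d}$.

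The main obstacle will be in the ``if'' direction, specifically for primes $\Gamma \subset {\rm Supp}(E)$ lying in $B_+^{\rm div}(D) \setminus B_-^{\rm div}(D)$, where $\sigma_\Gamma(D) = 0$ but $\sigma_\Gamma(D - \epsilon A) > 0$ for small $\epsilon$. In that case $\lambda(\epsilon) \to 0$ as $\epsilon \to 0^+$, so the naive limit of the equality $P_\sigma(D - \epsilon A - \lambda E) = P_\sigma(D - \epsilon A)$ collapses to the tautology $P_\sigma(D) = P_\sigma(D)$ and yields no information about $\sigma_\Gamma(D+E)$; overcoming this will require combining the perturbative equalities with the forward-direction derivative vanishing applied at the approximations $D - \epsilon A$, using concavity of ${\rm vol}^{1/d}$ to squeeze ${\rm vol}(D+tE)$ to the constant value ${\rm vol}(D)$ on the whole segment $t \in [0,1]$.
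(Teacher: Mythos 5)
Your proposal takes a genuinely different route from the paper, but both directions have gaps that are serious enough to sink it.

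In the ``only if'' direction, your reduction via Theorem \ref{Theorem17} to the single-component statement ``$\langle D^{d-1}\rangle\cdot\Gamma=0\ \Rightarrow\ \Gamma\subset B_+^{\rm div}(D)$'' is sound up to that point, but the final step is essentially circular. That implication \emph{is} the single-prime case of the lemma you are trying to prove (take $E=t\Gamma$ and differentiate), i.e.\ it is a Nakamaye-type characterization of $B_+^{\rm div}$ which the paper does not have available beforehand --- establishing it over an arbitrary field is precisely the content of the lemma and of \cite[Theorem B]{FKL}. Your appeal to Proposition \ref{Prop35} and Lemma \ref{Lemma200} does not fill the gap: to conclude $\langle D^{d-1}\rangle\cdot\Gamma\ge A^{d-1}\cdot\Gamma>0$ from $A^{d-1}\le\langle D^{d-1}\rangle$ in the partial order of $L^1(\mathcal X)$, you would need the functional ``evaluate on $\Gamma$'' to be nonnegative on ${\rm Psef}(L^1(\mathcal X))$, and it is not, since a prime divisor $\Gamma$ pairs negatively with some curve classes. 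The paper bypasses all of this: it uses Lemma \ref{Lemma2} to get the \emph{section-level} identity $\Gamma(X,nD')=\Gamma(X,nD'+rE)$ for every $D'\equiv D$ and all $r\ge 0$, and then for a component $F\not\subset{\rm Supp}(N_\sigma(D-\epsilon A))$, invokes \cite[Lemma 4.9]{FKL} to produce an explicit effective $G\equiv mD+F$ with $F\not\subset{\rm Supp}(G)$; the strict inclusion $\Gamma(X,\mathcal O_X(G-F))\subsetneq\Gamma(X,\mathcal O_X(G))$ then contradicts the section identity. This is a concrete linear-algebra contradiction, not a positivity statement on $L^1(\mathcal X)$, which is what makes the argument work over an arbitrary field.

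In the ``if'' direction you scale $E$ to $\lambda(\epsilon)E\le N_\sigma(D-\epsilon A)$, and you correctly flag that $\lambda(\epsilon)\to 0$ for primes in $B_+^{\rm div}(D)\setminus B_-^{\rm div}(D)$, but the proposed fix (re-using the forward direction plus concavity) is circular and does not close. The scaling is unnecessary: once ${\rm Supp}(E)\subset{\rm Supp}(N_\sigma(D-\epsilon A))$ one has the equality ${\rm vol}(D-\epsilon A+E)={\rm vol}(D-\epsilon A)$ for the \emph{entire} divisor $E$, not a small multiple, because $\sigma_\Gamma$ increases linearly along primes already in the negative part (this is \cite[Lemma 4.13]{FKL}, following \cite[Lemma III.1.8]{N}); letting $\epsilon\to 0^+$ and using continuity of volume finishes. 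The observation you were missing is that the $\sigma$-decomposition is stable under adding arbitrary, not merely small, effective divisors supported in $N_\sigma$.

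Finally, a minor point: the upgrade to ${\rm vol}(D+tE)={\rm vol}(D)$ for $t\in[0,1]$ needs only monotonicity of ${\rm vol}$ under adding effective divisors; invoking concavity of ${\rm vol}^{1/d}$ there is superfluous.
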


\begin{proof} Suppose that ${\rm vol}(D+E)={\rm vol}(D)$. 

Let $D'$ be an $\RR$-divisor such that $D'\equiv D$. Then ${\rm vol}(D'+E)={\rm vol}(D')$. 
Lemma \ref{Lemma2} implies 
$\Gamma(X,\mathcal O_X(nD'))=\Gamma(X,\mathcal O_X(nD'+sE))$ for all $n>0$ and $0\le s\le n$. Thus $\Gamma(X,\mathcal O_X(nD'))=\Gamma(X,\mathcal O_X(nD'+rE))$ for all $n>0$ and $r\ge 0$ by \cite[Lemma III.1.8, Corollary III.1.9]{N} or \cite[Lemma 4.1]{FKL}.  Let $A$ be an ample $\RR$-divisor on $X$ and suppose that $F$ is an irreducible component of $E$ and $F\not\subset \mbox{Supp}(N_{\sigma}(D-\epsilon A))$ for $\epsilon$ sufficiently small.  By \cite[Lemma 4.9]{FKL}, there exists $m>0$ such that
$$
mD+F=(\frac{1}{2}m\epsilon A+F)+(\frac{1}{2}m\epsilon A+mP_{\sigma}(D-\epsilon A))+mN_{\sigma}(D-\epsilon A)
$$
is numerically equivalent to an effective divisor $G$ that does not contain $F$ in its support. Let $D'=\frac{1}{m}(G-F)\equiv D$. Then for $r$ sufficiently large,
$$
\dim_k\Gamma(X,\mathcal O_X(mD'+rE))\ge \dim_k\Gamma(X,\mathcal O_X(mD'+F))>\dim_k\Gamma(X,\mathcal O_X(mD')),
$$
giving a contradiction, and so by (\ref{eq61}), $\mbox{Supp}(E)\subset B_+^{\rm div}(D)$.

Now suppose that $\mbox{Supp}(E)\subset B_{+}^{\rm div}(D)$.  Let $A$ be an ample $\RR$-divisor on $X$. By (\ref{eq61}), we have that $\mbox{Supp}(E)\subset \mbox{Supp}(N_{\sigma}(D-\epsilon A))$ for all sufficiently small positive $\epsilon$. By \cite[Lemma 4.13]{FKL}, we have that ${\rm vol}(D+E-\epsilon A)={\rm vol}(D-\epsilon A)$ for all sufficiently small $\epsilon>0$.
Thus ${\rm vol}(D+E)={\rm vol}(D)$ by continuity of volume of $\RR$-divisors. 

\end{proof}


\section{The Minkowski equality}\label{SecMink}

In this section,  we modify the proof sketched in \cite{LX2} of  \cite[Proposition 3.7]{LX2} to be valid over an arbitrary field. Characteristic zero is required in the proof in \cite{LX2}  as the existence of resolution of singularities is assumed and an argument using the theory of multiplier ideals is used, which  requires characteristic zero as it relies on both resolution of singularities and Kodaira vanishing.

\begin{Proposition}\label{Prop3} Let $X$ be a nonsingular  projective $d$-dimensional variety over a field $k$. Suppose that $L$ is a big $\RR$-divisor on $X$, and $P$ and $N$ are $\RR$-divisors on $X$  such that  $L\equiv P+N$ where ${\rm vol}(L)={\rm vol}(P)$ and $N$ is pseudo effective. Then $P_{\sigma}(P)\equiv P_{\sigma}(L)$.
\end{Proposition}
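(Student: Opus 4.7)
The plan is to combine the volume derivative formula (Theorem~\ref{Theorem17}) with Lemma~\ref{Lemma60} and Lemma~\ref{Lemma2}. First observe that $P$ is big since ${\rm vol}(P) = {\rm vol}(L) > 0$, and for $t \in [0,1]$ the class $P + tN$ lies on the segment between the big classes $P$ and $L$ in $N^1(X)$, so by convexity of the big cone $P + tN$ is big.

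Set $f(t) := {\rm vol}(P + tN)$. By Theorem~\ref{Theorem17},
\[
f'(t) = d\,\langle (P + tN)^{d-1}\rangle \cdot N.
\]
By Proposition~\ref{Prop35}, $\langle (P + tN)^{d-1}\rangle$ is a supremum of $\beta^{d-1}$ over nef $\RR$-Cartier classes $\beta \le P + tN$; each such $\beta^{d-1}\cdot N \ge 0$ since $\beta$ is nef and $N$ is pseudo effective, so $f'(t) \ge 0$. Since $f(0) = {\rm vol}(P) = {\rm vol}(L) = f(1)$, $f$ is constant equal to ${\rm vol}(P)$ on $[0,1]$.

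To convert this into a statement about $P_\sigma$, fix an ample $\QQ$-divisor $A$ on $X$. For each $\epsilon > 0$, $N + \epsilon A$ is big, so there is an effective $\RR$-divisor $E_\epsilon \equiv N + \epsilon A$. Write $E_\epsilon = E_\epsilon' + E_\epsilon''$, where $E_\epsilon'$ collects the prime components of $E_\epsilon$ supported in $B_+^{\rm div}(P)$ and $E_\epsilon''$ the remaining components. By Lemma~\ref{Lemma60}, ${\rm vol}(P + E_\epsilon') = {\rm vol}(P)$, and Lemma~\ref{Lemma2} applied to $P \le P + E_\epsilon'$ (both big, same volume) yields $P_\sigma(P + E_\epsilon') \equiv P_\sigma(P)$.

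It remains to show $[E_\epsilon''] \to 0$ in $N^1(X)$ as $\epsilon \to 0$. Granting this, $[P + E_\epsilon'] = [P] + [E_\epsilon] - [E_\epsilon''] \to [P] + [N] = [L]$, and continuity of $P_\sigma$ on the big cone (via continuity of each $\sigma_\Gamma$ on ${\rm Big}(X)$) forces $P_\sigma(P + E_\epsilon') \to P_\sigma(L)$ in $N^1(X)$, giving $P_\sigma(L) \equiv P_\sigma(P)$. The vanishing of the exterior classes should follow from the constancy of $f$ established above: a nontrivial pseudoeffective subsequential limit $[M] = \lim[E_{\epsilon_n}'']$ would arise as the limit of classes of effective divisors with support disjoint from $B_+^{\rm div}(P)$, forcing by Lemma~\ref{Lemma60} a uniform strict gap ${\rm vol}(P + E_{\epsilon_n}'') > {\rm vol}(P)$, and hence (using $P + E_{\epsilon_n} \ge P + E_{\epsilon_n}''$) an asymptotic lower bound on ${\rm vol}(L + \epsilon_n A)$ bounded away from ${\rm vol}(P)$, contradicting ${\rm vol}(L + \epsilon A) \to {\rm vol}(L) = {\rm vol}(P)$. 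The principal obstacle is making this last limit argument precise: quantitatively tracking how the exterior effective components $E_\epsilon''$ degenerate numerically as $\epsilon \to 0$, so that the identity $P_\sigma(P + E_\epsilon') \equiv P_\sigma(P)$ can be upgraded in the limit to the desired identity $P_\sigma(L) \equiv P_\sigma(P)$.
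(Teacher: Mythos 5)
Your opening computation is fine: establishing that $t\mapsto {\rm vol}(P+tN)$ is constant on $[0,1]$ via the derivative formula is a correct restatement of the identity $\langle (P+tN)^{d-1}\rangle\cdot N = 0$, which is essentially the intermediate fact the paper also extracts (the paper derives $\langle L^{d-1}\cdot N\rangle = 0$ directly from superadditivity of the positive intersection product rather than differentiating). The problem is what you do, or rather fail to do, with that fact afterward.

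The limiting argument is where the proof breaks down, and in the place you yourself flag. First, the choice of $E_\epsilon\equiv N+\epsilon A$ is highly nonunique, and the decomposition $E_\epsilon = E_\epsilon' + E_\epsilon''$ by support depends on that choice; there is no mechanism forcing $[E_\epsilon'']\to 0$ for an arbitrary choice, and you do not propose a canonical one. Second, even accepting a fixed choice, the contradiction argument does not go through: Lemma~\ref{Lemma60} gives only a \emph{pointwise} strict inequality ${\rm vol}(P+E_{\epsilon_n}'')>{\rm vol}(P)$ whenever $E_{\epsilon_n}''\neq 0$, not a uniform lower bound on the gap, so the gap may perfectly well shrink to zero while $[E_{\epsilon_n}'']$ does not. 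And a subsequential limit $[M]$ of classes of effective divisors supported off $B_+^{\rm div}(P)$ is merely pseudoeffective; it need not be the class of an effective divisor, let alone one with controlled support, so Lemma~\ref{Lemma60} cannot be applied to it. Nothing in your argument rules out the (a priori possible) scenario that the exterior mass persists in the limit.

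The paper closes exactly this gap in a different and cleaner way. Having established $\langle L^{d-1}\cdot N\rangle = 0$, one writes $N=P_\sigma(N)+N_\sigma(N)$, takes an ample $\overline A$ with $L-\overline A$ big, and uses superadditivity together with Lemma~\ref{Lemma34} and (\ref{eq90}) to conclude $(A^{d-1}\cdot P_\sigma(N))_X=0$. Then Lemma~\ref{Lemma7} (a pseudo effective class with zero intersection against $A^{d-1}$ is numerically trivial) forces $P_\sigma(N)\equiv 0$, so $N\equiv N_\sigma(N)$ is numerically equivalent to an \emph{effective} divisor. Replacing $P$ by $P+P_\sigma(N)$, one may assume $N$ is effective, and Lemma~\ref{Lemma2} (for $P\le P+N$ big with equal volumes) finishes. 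This sidesteps the approximation-and-limit step entirely: rather than trying to push a family of effective representatives through a limit, it shows the obstruction $P_\sigma(N)$ is numerically zero, which is precisely the information your constancy statement contains but which you never extract. I would suggest revisiting your proof by replacing the $E_\epsilon$ construction with the observation that $\langle L^{d-1}\cdot N\rangle=0$ already kills the movable part of $N$.
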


\begin{proof}  Write $N=P_{\sigma}(N)+N_{\sigma}(N)$. 


 Since $L$ and $P$ are big $\RR$-Cartier divisors, by superadditivity and positivity of intersection products,
$$
\begin{array}{lll}
{\rm vol}(L)&=&\langle L^d\rangle
\ge\langle L^{d-1}\cdot P\rangle+\langle L^{d-1}\cdot N\rangle\\
&=& \langle(P+N)^{d-1}\cdot P\rangle + \langle L^{d-1}\cdot N\rangle\\
& \ge& \langle P^d\rangle +\langle L^{d-1}\cdot N\rangle
= {\rm vol}(P)+\langle L^{d-1}\cdot N\rangle.
\end{array}
$$
Thus $\langle L^{d-1}\cdot N\rangle =0$. Let $A$ be an ample Cartier divisor on $X$. There exists  a small real multiple $\overline A$ of $A$ such that $B:=L-\overline A$ is a big $\RR$-Cartier divisor. 
$$
0=\langle (\overline A+B)^{d-1}\cdot N\rangle \ge \langle \overline A^{d-1}\cdot N\rangle=\langle \overline A^{d-1}\cdot P_{\sigma}(N)+N_{\sigma}(N)\rangle \ge 
\langle \overline A^{d-1}\cdot P_{\sigma}(N)\rangle =\overline A^{d-1}\cdot \langle P_{\sigma}(N)\rangle
$$
by superadditivity and Lemma \ref{Lemma34}. 

By Lemma \ref{Lemma31}, $P_{\sigma}(N)+\epsilon \overline A$ is big and movable, so by (\ref{eq90}),
$$
\overline A^{d-1}\cdot \langle P_{\sigma}(N)+\epsilon \overline A\rangle=\overline A^{d-1}\cdot(P_{\sigma}(N)+\epsilon \overline A),
$$
so 
$$
\overline A^{d-1}\cdot \langle P_{\sigma}(N)\rangle =\lim_{\epsilon \rightarrow 0} \overline A^{d-1}\cdot \langle P_{\sigma}(N)+\epsilon \overline A\rangle=\overline A^{d-1}\cdot P_{\sigma}(N).
$$

Thus 
\begin{equation}\label{eq6}
(A^{d-1}\cdot P_{\sigma}(N))_X=0
\end{equation}
and so  $P_{\sigma}(N)\equiv 0$ by Lemma \ref{Lemma7}.
Thus $N\equiv N_{\sigma}(N)$.  Thus, replacing $P$ with the numerically equivalent divisor $P+P_{\sigma}(N)$,
we may assume that $N$ is effective. By Lemma \ref{Lemma2}, we have that 
$$
P_{\sigma}(P)=P_{\sigma}(P+N)\equiv P_{\sigma}(L).
$$
\end{proof}

\begin{Lemma}\label{Lemma10}  Let $X$ be a nonsingular $d$-dimensional projective variety over a field $k$. Suppose that $L_1$ and  $L_2$ are big $\RR$-divisors on $X$. Set $s$ to be the largest real number $s$ such that $L_1-sL_2$ is pseudo effective. Then
\begin{equation}\label{eq11}
s^d\le \frac{{\rm vol}(L_1)}{{\rm vol}(L_2)}
\end{equation}
and if  equality holds in (\ref{eq11}), then  $P_{\sigma}(L_1)\equiv sP_{\sigma}(L_2)$.
\end{Lemma}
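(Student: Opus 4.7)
The plan is to deduce both statements directly from the Minkowski inequality (Theorem \ref{Ineq+}(4)) and from the freshly proved Proposition \ref{Prop3}.

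\textbf{Setup.} First I would observe that the supremum $s$ is attained and is finite. Since $L_1$ is big and $0 \cdot L_2 = 0$, the value $s = 0$ works, so $s \ge 0$. On the other hand, because $L_2$ is big and the pseudo effective cone is closed and strict, $-L_2$ is not pseudo effective, so $L_1 - sL_2$ cannot remain pseudo effective as $s \to \infty$ (dividing by $s$ and taking a limit would force $-L_2$ into the closed pseudo effective cone). The set of valid $s$ is the intersection of the closed pseudo effective cone with an affine line, hence a closed interval $[0, s]$ where $s$ is realized.

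\textbf{The inequality (\ref{eq11}).} Both $sL_2$ and $L_1 - sL_2$ are pseudo effective, and $L_1 = sL_2 + (L_1 - sL_2)$. By the Minkowski inequality Theorem \ref{Ineq+}(4), together with the fact that ${\rm vol}(L_1 - sL_2) \ge 0$,
\[
{\rm vol}(L_1)^{1/d} \ge {\rm vol}(sL_2)^{1/d} + {\rm vol}(L_1 - sL_2)^{1/d} \ge {\rm vol}(sL_2)^{1/d} = s\,{\rm vol}(L_2)^{1/d}.
\]
Raising to the $d$-th power and dividing by ${\rm vol}(L_2) > 0$ gives (\ref{eq11}).

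\textbf{The equality case.} Assume $s^d = {\rm vol}(L_1)/{\rm vol}(L_2)$. Since $L_1$ is big, ${\rm vol}(L_1) > 0$, hence $s > 0$, so $P := sL_2$ is big. Moreover ${\rm vol}(P) = s^d\,{\rm vol}(L_2) = {\rm vol}(L_1)$. With $N := L_1 - sL_2$, we have $L_1 = P + N$, $P$ big, $N$ pseudo effective, and ${\rm vol}(L_1) = {\rm vol}(P)$. Proposition \ref{Prop3} then yields $P_\sigma(P) \equiv P_\sigma(L_1)$. By the homogeneity of the $\sigma$-decomposition (coming from the $\QQ$-homogeneity of the functions $\sigma_\Gamma$ on ${\rm Big}(X)$), $P_\sigma(sL_2) = s\,P_\sigma(L_2)$, and therefore $P_\sigma(L_1) \equiv s\,P_\sigma(L_2)$, as required.

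\textbf{Expected difficulty.} There is no serious obstacle: the inequality is a one-line application of Minkowski, and the equality case is designed to be exactly the input required by Proposition \ref{Prop3}. The only small check is that $P = sL_2$ is big (so that Proposition \ref{Prop3} applies and $P_\sigma$ is defined via its ${\rm Big}(X)$-formulation), which is immediate from $s > 0$ as derived above.
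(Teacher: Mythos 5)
Your proof is correct and follows essentially the same route as the paper: establish the volume inequality by comparing ${\rm vol}(L_1)$ with ${\rm vol}(sL_2)$, and in the equality case feed $P=sL_2$, $N=L_1-sL_2$ into Proposition \ref{Prop3}, finishing with homogeneity of $P_\sigma$. The paper's own proof is more terse on two points where you are more explicit: it simply remarks that $s$ is well defined because the pseudo effective cone is closed (without the boundedness argument via strictness of the cone, which you spell out), and it obtains ${\rm vol}(L_1)\ge {\rm vol}(sL_2)$ directly from monotonicity of volume under adding a pseudo effective class rather than routing through the Minkowski inequality of Theorem \ref{Ineq+}(4); the latter is a heavier tool than necessary (itself a consequence of super-additivity of positive intersection products), though of course perfectly valid. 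One small remark: you note $s>0$ to ensure $P=sL_2$ is big, but this is automatic once ${\rm vol}(P)={\rm vol}(L_1)>0$, which is exactly the hypothesis being fed into Proposition \ref{Prop3}.
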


\begin{proof} The pseudo effective cone is closed, so $s$ is well defined. We have $L_1\equiv sL_2+\gamma$ where $\gamma$ is pseudo effective. Thus ${\rm vol}(L_1)\ge {\rm vol}(sL_2)=s^d{\rm vol}(L_2)$. If this is an equality, then $sP_{\sigma}(L_2)\equiv P_{\sigma}(L_1)$ by Proposition \ref{Prop3}.
\end{proof}

Let $X$ be a projective variety over a field $k$. An alteration $\phi:Y\rightarrow X$ is a proper and dominant morphism such that $Y$ is a nonsingular  projective variety and $[k(Y):k(X)]<\infty$. If $X$ is normal and $D$ is a pseudo effective $\RR$-Cartier divisor on $X$, then by \cite[Lemma 4.12]{FKL},
\begin{equation}\label{eqNew20}
\phi_*N_{\sigma}(\phi^*D)=\deg(Y/X)N_{\sigma}(D).
\end{equation}

 It is proven in \cite{dJ} that for such $X$, an alteration always exists (although it may be that $k(Y)$ is not separable over $k(X)$ if $k$ is not perfect).

\begin{Lemma}\label{Lemma21} Suppose that $X$ is a   projective variety over a field $k$, $\phi:Y\rightarrow X$ is an alteration  and $L_1, L_2$ are pseudo effective $\RR$-Cartier divisors on $X$. 
Suppose that $s\in \RR_{>0}$. Then $\phi^*(L_1)-sP_{\sigma}(\phi^*(L_2))$ is pseudo effective if and only if $P_{\sigma}(\phi^*(L_1))-sP_{\sigma}(\phi^*(L_2))$ is pseudo effective.
\end{Lemma}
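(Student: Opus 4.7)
The plan is to handle the two directions separately, with the reverse implication being immediate and the forward implication reduced to a statement about the negative $\sigma$-parts. Throughout, write $\alpha = P_\sigma(\phi^*L_1) - sP_\sigma(\phi^*L_2)$ and $\beta = \phi^*(L_1) - sP_\sigma(\phi^*L_2)$, so that
$$\beta = \alpha + N_\sigma(\phi^*L_1).$$
Note that $\phi^*L_1$ is pseudo effective (pullback of pseudo effective under a surjective morphism), and since $Y$ is nonsingular, the $\sigma$-decomposition of Subsection \ref{Subsecsigma} applies to $\phi^*L_1$ and $\phi^*L_2$.

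For the reverse direction, suppose $\alpha$ is pseudo effective. Since $N_\sigma(\phi^*L_1)$ is an effective $\mathbb{R}$-divisor, it is in particular pseudo effective, so $\beta = \alpha + N_\sigma(\phi^*L_1)$ is a sum of two pseudo effective classes and hence pseudo effective.

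The forward direction is the substantive one. Assume $\beta$ is pseudo effective. By Lemma \ref{Lemma31}.2, $P_\sigma(\phi^*L_2)$ lies in $\overline{\rm Mov}(Y)$, so $\sigma_\Gamma(sP_\sigma(\phi^*L_2))=0$ for every prime divisor $\Gamma$ on $Y$. Applying the subadditivity of $\sigma_\Gamma$ (valid for pseudo effective divisors by taking $t\to 0^+$ in the big-case subadditivity recalled in Subsection \ref{Subsecsigma}) to the sum
$$\phi^*L_1 = \beta + sP_\sigma(\phi^*L_2),$$
we obtain $\sigma_\Gamma(\phi^*L_1) \le \sigma_\Gamma(\beta)$ for every prime $\Gamma$. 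Summing over all prime divisors gives the inequality of effective $\mathbb{R}$-divisors
$$N_\sigma(\phi^*L_1) \le N_\sigma(\beta).$$

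Finally, rewrite
$$\alpha = \beta - N_\sigma(\phi^*L_1) = P_\sigma(\beta) + \bigl(N_\sigma(\beta) - N_\sigma(\phi^*L_1)\bigr).$$
The first summand $P_\sigma(\beta)$ is pseudo effective by Lemma \ref{Lemma31}.1, and the second summand is an effective $\mathbb{R}$-divisor by the inequality just established, hence also pseudo effective. Therefore $\alpha$ is pseudo effective, which completes the proof. The only step that requires any thought is the comparison $N_\sigma(\phi^*L_1) \le N_\sigma(\beta)$, and this is the main technical point; everything else is bookkeeping with the $\sigma$-decomposition.
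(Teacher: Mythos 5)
Your proof is correct and the overall strategy matches the paper's: both directions hinge on comparing the two decompositions of $\phi^*L_1$, and the substantive forward direction reduces to the same key inequality $N_\sigma(\phi^*L_1)\le N_\sigma(\beta)$. What differs is how you obtain that inequality. You derive it directly from the subadditivity of $\sigma_\Gamma$ applied to the identity $\phi^*L_1=\beta+sP_\sigma(\phi^*L_2)$, combined with the vanishing $\sigma_\Gamma\bigl(sP_\sigma(\phi^*L_2)\bigr)=0$ supplied by Lemma \ref{Lemma31}. The paper instead observes that $\phi^*L_1-N_\sigma(\beta)\equiv sP_\sigma(\phi^*L_2)+P_\sigma(\beta)$ lies in $\overline{\rm Mov}(Y)$ (a sum of two movable classes, again via Lemma \ref{Lemma31}), and then appeals to Nakayama's characterization of $N_\sigma$ as the minimal effective divisor whose removal produces a movable class, i.e.\ \cite[Proposition III.1.14]{N}. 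These are two routes to the same comparison: Nakayama's proposition is itself established via the subadditivity and $\sigma_\Gamma$-vanishing you use. Your version is a bit more self-contained, trading a citation to Nakayama for the short limit argument that propagates subadditivity from big to pseudo effective divisors; that extension is routine and your parenthetical justification of it is adequate. Once the comparison $N_\sigma(\phi^*L_1)\le N_\sigma(\beta)$ is in hand, your bookkeeping $\alpha=P_\sigma(\beta)+\bigl(N_\sigma(\beta)-N_\sigma(\phi^*L_1)\bigr)$ is identical in content to the paper's final step, and the reverse implication is the same one-line observation in both.
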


\begin{proof}  Certainly if $P_{\sigma}(\phi^*L_1)-sP_{\sigma}(\phi^*L_2)$ is pseudo effective then $\phi^*(L_1)-sP_{\sigma}(\phi^*L_2)$ is pseudo effective. Suppose $\phi^*(L_1)-sP_{\sigma}(\phi^*(L_2))$ is pseudo effective. Then there exists a pseudo effective  $\RR$-divisor $\gamma$ on $Y$ such that 
$$
P_{\sigma}(\phi^*L_1)+N_{\sigma}(\phi^*L_1)=\phi^*L_1\equiv  sP_{\sigma}(\phi^* L_2)+\gamma
=(sP_{\sigma}(\phi^*L_2)+P_{\sigma}(\gamma))+N_{\sigma}(\gamma).
$$
The effective $\RR$-divisor $N_{\sigma}(\gamma)$ has the property that  $\phi^*(L_1)-N_{\sigma}(\gamma)$ is movable by Lemma \ref{Lemma31}, so 
$N_{\sigma}(\gamma))\ge N_{\sigma}(\phi^*L_1)$ by \cite[Proposition III.1.14]{N}. Thus $P_{\sigma}(\phi^*L_1)-sP_{\sigma}(\phi^*L_2)$ is pseudo effective.

\end{proof}

\begin{Lemma}\label{Lemma22} 
Let $X$ be a  $d$-dimensional projective variety over a field $k$. Suppose that $L_1$ and  $L_2$ are big and movable $\RR$-Cartier divisors on $X$. Let $s$  be the largest real number  such that $L_1-sL_2$ is pseudo effective. Then
\begin{equation}\label{eq23}
s^d\le \frac{{\rm vol}(L_1)}{{\rm vol}(L_2)}
\end{equation}
and if  equality holds in (\ref{eq23}), then  $L_1$ and $L_2$ are proportional in $N^1(X)$.
\end{Lemma}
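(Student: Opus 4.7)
The plan is to obtain the inequality directly from volume monotonicity, and to reduce the equality case to the nonsingular Lemma \ref{Lemma10} via an alteration. For (\ref{eq23}), since $L_1 - sL_2$ is pseudo effective by definition of $s$, monotonicity of volume gives ${\rm vol}(L_1) \ge {\rm vol}(sL_2) = s^d\,{\rm vol}(L_2)$, and the inequality holds with no need to leave $X$.

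Assume now equality in (\ref{eq23}). I would invoke \cite{dJ} to choose an alteration $\phi: Y \to X$ with $Y$ a nonsingular projective variety of dimension $d$, and set $M_i = \phi^*L_i$. Pullback preserves pseudo-effectivity, so the largest real number $s'$ with $M_1 - s'M_2$ pseudo effective on $Y$ satisfies $s' \ge s$. Applying Lemma \ref{Lemma10} on $Y$ and using (\ref{eq43}) in the form ${\rm vol}(M_i) = \deg(Y/X){\rm vol}(L_i)$ gives $(s')^d \le {\rm vol}(M_1)/{\rm vol}(M_2) = s^d$, forcing $s' = s$ and equality in Lemma \ref{Lemma10}. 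Consequently $P_\sigma(M_1) \equiv s\,P_\sigma(M_2)$ in $N^1(Y)$, so $M_1 - sM_2 \equiv N_\sigma(M_1) - s\,N_\sigma(M_2)$, and pushing forward by $\phi_*$ via (\ref{eq41}) yields
$$
\deg(Y/X)(L_1 - sL_2) \equiv \phi_*\bigl(N_\sigma(M_1) - s\,N_\sigma(M_2)\bigr)
$$
as classes in $N_{d-1}(X)$.

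The hard part will be showing $\phi_* N_\sigma(M_i) = 0$, which is where the hypothesis that $L_i$ is movable enters. To handle this, I would factor $\phi$ through the normalization $\nu: X' \to X$ as $\phi = \nu \circ \phi'$ with $X'$ normal and $\phi': Y \to X'$ an alteration. Since $\nu$ is finite and birational, the pullback of any integral Cartier divisor with no codimension one fixed component again has no codimension one fixed component (the fixed locus of $|\nu^*D|$ is contained in $\nu^{-1}$ of the fixed locus of $|D|$), so $\nu^*$ preserves ${\rm Mov'}$ and, by continuity, sends $\overline{\rm Mov}(X)$ into $\overline{\rm Mov}(X')$; thus $\nu^* L_i$ is big and movable on the normal variety $X'$. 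Equation (\ref{eq50}) then gives $\sigma_\Gamma(\nu^* L_i)_\QQ = 0$ for every prime divisor $\Gamma$ on $X'$, whence $N_\sigma(\nu^* L_i) = 0$ in the divisorial Zariski decomposition of \cite{FKL}. Applying (\ref{eqNew20}) to $\phi'$ then gives $\phi'_* N_\sigma(M_i) = \deg(Y/X')\, N_\sigma(\nu^* L_i) = 0$, so $\phi_* N_\sigma(M_i) = \nu_* \phi'_* N_\sigma(M_i) = 0$. Combining, $\deg(Y/X)(L_1 - sL_2) \equiv 0$ in $N_{d-1}(X)$, and the injection $N^1(X) \hookrightarrow N_{d-1}(X)$ from Lemma \ref{Lemma55} yields $L_1 \equiv s L_2$ in $N^1(X)$, the required proportionality.
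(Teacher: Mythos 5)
Your proof is correct, and it follows the paper's overall strategy (inequality by volume monotonicity, equality case via an alteration and Lemma \ref{Lemma10}, pushforward to conclude proportionality on $X$ using Lemma \ref{Lemma55}), but it diverges from the paper in two places worth noting. First, you apply Lemma \ref{Lemma10} on $Y$ directly to $M_i = \phi^*L_i$ with the slope $s'$ for $M_1 - s'M_2$; since $\phi^*$ preserves pseudo-effectivity, $s' \ge s$ is immediate and you avoid Lemma \ref{Lemma21} entirely, whereas the paper works with $s_Y$ defined in terms of $P_\sigma(\phi^*L_1) - s_Y P_\sigma(\phi^*L_2)$ and invokes Lemma \ref{Lemma21} to compare $s_Y$ with $s$. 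This is a genuine shortening. Second, for the key vanishing $\phi_*N_\sigma(\phi^*L_i) = 0$, the paper argues by hand: for each non-$\phi$-exceptional prime $\Gamma \subset Y$ with image $\tilde\Gamma$, it uses movability to choose effective $D_j \to L_i$ in $N^1(X)$ with $\tilde\Gamma \not\subset \mathrm{Supp}(D_j)$, gets $\sigma_\Gamma(\phi^*D_j) = 0$, and concludes $\sigma_\Gamma(\phi^*L_i) = 0$ by continuity of $\sigma_\Gamma$ on the big cone, so that $N_\sigma(\phi^*L_i)$ is $\phi$-exceptional. You instead factor $\phi = \nu\circ\phi'$ through the normalization, check that $\nu^*$ carries $\overline{\mathrm{Mov}}(X)$ into $\overline{\mathrm{Mov}}(X')$ (via the observation that $\nu^{-1}$ of a codimension-$\ge 2$ base locus has codimension $\ge 2$ since $\nu$ is finite), then invoke (\ref{eq50}) to get $\sigma_\Gamma(\nu^*L_i)_\QQ = 0$, hence $N_\sigma(\nu^*L_i)=0$ in the sense of \cite{FKL}, and finish with (\ref{eqNew20}). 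Both routes are valid; yours is more modular, leaning on results already recorded in the preliminaries, while the paper's is more self-contained. The only point you should state a little more carefully is the passage from $\sigma_\Gamma(\nu^*L_i)_\QQ = 0$ to the vanishing of the \cite{FKL} $N_\sigma$ on the normal variety $X'$: this uses that, for big $\RR$-Cartier divisors, the $\sigma_\Gamma$ entering the divisorial Zariski decomposition of \cite{FKL} agrees with $\sigma_\Gamma(\cdot)_\QQ$, which follows from the limit description quoted in the paper together with Fekete subadditivity, but deserves an explicit sentence.
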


\begin{proof} Let $\phi:Y\rightarrow X$ be an alteration. 

Let $L$ be a big and movable $\RR$-Cartier divisor on $X$. Let $\Gamma\subset Y$ be a prime divisor which is not exceptional for $\phi$. Let $\tilde \Gamma$ be the codimension one subvariety of $X$ which is the  support of $\phi_*\Gamma$. 
Since $L$ is movable, there exist effective $\RR$-Cartier divisors $D_i$ on $X$ such that $\lim_{i\rightarrow \infty}D_i=L$ in $N^1(X)$ and $\tilde\Gamma\not\subset\mbox{Supp}(D_i)$ for all $i$. We thus have that $\phi^*(L)=\lim_{i\rightarrow\infty}\phi^*(D_i)$ in $N^1(Y)$ and $\Gamma\not\subset\mbox{Supp}(\phi^*(D_i))$ for all $i$, so that $\sigma_{\Gamma}(\phi^*(D_i))=0$ for all $i$. Thus $\sigma_{\Gamma}(\phi^*(L))=0$ since $\sigma_{\Gamma}$ is continuous on the big cone of $Y$.
Thus $N_{\sigma}(\phi^*L)$ has exceptional support for $\phi$ and thus $\phi_*(P_{\sigma}(\phi^*L))=\phi_*(\phi^*L)=\deg(Y/X)L$ by (\ref{eq41}).


Let $s_Y$ be the largest real number such that $P_{\sigma}(\phi^*L_1)-s_YP_{\sigma}(\phi^*L_2))$ is pseudo effective. Then $s_Y\ge s$  since $\phi^*L_1-s\phi^*L_2$ is pseudo effective and  by Lemma \ref{Lemma21}, and so
$$
s^d\le s_Y^d\le \frac{{\rm vol}(\phi^*L_1)}{{\rm vol}(\phi^*L_2)}=\frac{{\rm vol}(L_1)}{{\rm vol}(L_2)}
$$
by Lemma \ref{Lemma10}  and (\ref{eq43}).

If $s^d=\frac{{\rm vol}(L_1)}{{\rm vol}(L_2)}$, then $P_{\sigma}(\phi^*(L_1))=sP_{\sigma}(\phi^*(L_2))$ in $N^1(Y)$ by Lemma \ref{Lemma10}, and so 
$$
\deg(Y/X)(L_1-sL_2)=\phi_*(\phi^*(L_1)-s\phi^*(L_2))=\phi_*(P_{\sigma}(\phi^*(L_1))-s\phi_*(P_{\sigma}(\phi^*(L_2))=0
$$
 in $N_{d-1}(X)$,
 so that $0=L_1-sL_2$ in $N^1(X)$ by Lemma \ref{Lemma55}. 
 
\end{proof}

The following proposition is proven over an algebraically closed field of characteristic zero in \cite[Proposition 3.3]{LX2}.

\begin{Proposition}\label{Prop13}
Suppose that $X$ is a   projective $d$-dimensional variety over a field $k$ and $L_1,L_2$
 are big and moveable $\RR$-Cartier divisors on $X$. Then 
$$
\langle L_1^{d-1} \rangle\cdot L_2 \ge{\rm vol}(L_1)^{\frac{d-1}{d}}{\rm vol}(L_2)^{\frac{1}{d}}
$$
with equality if and only if $L_1$ and $L_2$ are proportional in $N^1(X)$.
\end{Proposition}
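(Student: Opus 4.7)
The inequality $\langle L_1^{d-1}\rangle\cdot L_2 \ge {\rm vol}(L_1)^{(d-1)/d}{\rm vol}(L_2)^{1/d}$ is immediate from Theorem \ref{Ineq+}(3) with $i = d-1$: taking $d$-th roots of $s_{d-1}^d \ge s_0 s_d^{d-1}$ yields precisely this. If $L_1$ and $L_2$ are proportional in $N^1(X)$ then equality follows at once from the homogeneity of positive intersection products and (\ref{eq40}), so I focus on the converse.

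Assume equality holds. The first step is to derive the full Minkowski equality ${\rm vol}(L_1 + tL_2)^{1/d} = {\rm vol}(L_1)^{1/d} + t\,{\rm vol}(L_2)^{1/d}$ for every $t \ge 0$. Set $f(t) = {\rm vol}(L_1 + tL_2)^{1/d}$. By Theorem \ref{Ineq+}(4) together with the homogeneity of volume, $f$ is concave on the open interval where $L_1 + tL_2$ is big. By Theorem \ref{Theorem17} it is differentiable there with
$$
f'(0) = {\rm vol}(L_1)^{-(d-1)/d}\,\langle L_1^{d-1}\rangle\cdot L_2,
$$
which under the hypothesis equals ${\rm vol}(L_2)^{1/d}$. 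Since $f'$ is non-increasing by concavity and $\lim_{t\to\infty} f(t)/t = {\rm vol}(L_2)^{1/d}$ by rescaling and continuity of volume, $f'$ is constantly ${\rm vol}(L_2)^{1/d}$ on $[0,\infty)$, yielding the claimed Minkowski equality.

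Let $s = s(L_1,L_2)$ be the largest real number such that $L_1 - sL_2$ is pseudo effective. By Lemma \ref{Lemma22}, $s^d \le {\rm vol}(L_1)/{\rm vol}(L_2)$, with equality forcing proportionality of $L_1$ and $L_2$ in $N^1(X)$; it therefore suffices to prove the reverse inequality $s^d \ge {\rm vol}(L_1)/{\rm vol}(L_2)$. I would pass to an alteration $\phi\colon Y \to X$ with $Y$ nonsingular. Setting $M_i := \phi^* L_i$ and $P_i := P_\sigma(M_i)$, the $P_i$ are big and movable on $Y$ because $N_\sigma(M_i)$ is $\phi$-exceptional (as in the proof of Lemma \ref{Lemma22}); moreover ${\rm vol}(P_i) = \deg(Y/X)\,{\rm vol}(L_i)$ by (\ref{eq43}) and the slope $s_Y$ of $P_1$ with respect to $P_2$ satisfies $s_Y \ge s$ by Lemma \ref{Lemma21}. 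The Minkowski equality pulls back along $\phi$ and, combined with Lemma \ref{Lemma2} and the pseudo effective inequality $P_i \le M_i$, descends to Minkowski equality for $P_1$ and $P_2$ on $Y$.

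The main obstacle is now to upgrade this Minkowski equality, which is valid for $t \ge 0$, to a Diskant-type rigidity forcing the concave function $f_P(t) := {\rm vol}(P_1 + tP_2)^{1/d}$ on $(-s_Y,\infty)$ to be linear on the whole interval; since $f_P(-s_Y) = 0$ by continuity of volume and $f_P$ has slope ${\rm vol}(P_2)^{1/d}$ on $[0,\infty)$, such linearity gives $s_Y^d = {\rm vol}(P_1)/{\rm vol}(P_2)$. To close this gap I would apply Proposition \ref{Prop3} on $Y$ to the boundary decomposition $P_1 \equiv s_Y P_2 + (P_1 - s_Y P_2)$: once one establishes via a Diskant-type refinement of the concavity argument (adapting the strategy of \cite[Proposition 3.3]{LX2} to the arbitrary-field setting) that ${\rm vol}(P_1) = {\rm vol}(s_Y P_2)$, Proposition \ref{Prop3} gives $P_\sigma(P_1) \equiv s_Y P_\sigma(P_2)$, hence $P_1 \equiv s_Y P_2$ in $N^1(Y)$ since both are movable. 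Pushing forward by $\phi_*$ and invoking (\ref{eq41}) and Lemma \ref{Lemma55}(1) then yields $L_1 \equiv s_Y L_2$ in $N^1(X)$, which is the desired proportionality.
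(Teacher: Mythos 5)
There are two genuine gaps, the first of which is an outright error. You claim the inequality ``is immediate from Theorem~\ref{Ineq+}(3) with $i=d-1$,'' but taking $i=d-1$ in that theorem gives $\langle L_1^{d-1}\cdot L_2\rangle^d\ge \langle L_2^d\rangle\langle L_1^d\rangle^{d-1}$, which is an inequality for the positive intersection number $\langle L_1^{d-1}\cdot L_2\rangle\in L^0(\mathcal X)$, not for the pairing $\langle L_1^{d-1}\rangle\cdot L_2$ that appears in the Proposition. These two quantities are constructed differently, and a priori one only knows $\langle L_1^{d-1}\rangle\cdot L_2\ge\langle L_1^{d-1}\cdot L_2\rangle$; this auxiliary inequality is precisely what the paper has to prove via the nef approximation argument (writing $\psi_m^*(L_2)=A_{2,m}+E_{2,m}$ with $A_{2,m}$ nef and $E_{2,m}$ effective, using $A_{1,m}^{d-1}\cdot\psi_m^*(L_2)\ge A_{1,m}^{d-1}\cdot A_{2,m}$, and passing to the limit). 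Your ``immediate'' step therefore skips over an intersection-theoretic point that needs the approximation machinery, not just Theorem~\ref{Ineq+}.

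The second gap is the one you flag yourself. The detour through $f(t)={\rm vol}(L_1+tL_2)^{1/d}$ being linear on $[0,\infty)$ is a correct deduction from the equality hypothesis, Theorem~\ref{Theorem17}, and concavity, but it does not by itself yield $s^d\ge {\rm vol}(L_1)/{\rm vol}(L_2)$: a concave function can be linear with slope ${\rm vol}(L_2)^{1/d}$ on $[0,\infty)$ while the integral $\int_{-s}^0 f'\,dt\ge s\,{\rm vol}(L_2)^{1/d}$ only gives the inequality $s^d\le {\rm vol}(L_1)/{\rm vol}(L_2)$, which you already have from Lemma~\ref{Lemma22}. Extending the linearity to $(-s,\infty)$ --- or equivalently showing ${\rm vol}(P_\sigma(\phi^*L_1))={\rm vol}(s_Y P_\sigma(\phi^*L_2))$ so Proposition~\ref{Prop3} applies --- is exactly the content of the Diskant inequality, and you write that you would establish it ``via a Diskant-type refinement\ldots\ adapting the strategy of \cite[Proposition 3.3]{LX2}.'' That strategy \emph{is} the paper's proof: approximate by nef divisors $A_{i,m}$, apply the Diskant inequality for nef and big divisors from \cite{C} or \cite{BFJ}, control the slopes $s_m$ against $s_L+\epsilon$ using closedness of the pseudo effective cone, and pass to the limit to obtain (\ref{eq16}). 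Without carrying out this step, the argument is circular, since deducing proportionality from Minkowski equality alone is essentially Theorem~\ref{Theorem18}, which is proved \emph{from} Proposition~\ref{Prop13}.
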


\begin{proof} 
Let $f:\overline X\rightarrow X$ be the normalization of $X$. Since $\overline X$ has no exceptional divisors for $f$, $f^*L_1$ and $f^*L_2$ are movable. We have that
$\langle f^*L_1^{d-1}\rangle \cdot f^*L_2 =\langle L_1^{d-1}\rangle \cdot L_2$ and $\rm{vol}(f^*L_i)={\rm vol}(L_i)$ for $i=1,2$. Further, 
$f^*:N^1(X)\rightarrow N^1(\overline X)$ is an injection, so $L_1$ and $L_2$ are proportional in $N^1(X)$ if and only if $f^*L_1$ and $f^*L_2$ are proportional in $N^1(\overline X)$. We may thus replace $X$ with its normalization $\overline X$, and so we can can assume for the remainder of the proof that $X$ is normal. 


We construct birational morphisms $\psi_m:Y_m\rightarrow X$ with 
numerically effective $\RR$-Cartier divisors $A_{i,m}$ and  effective $\RR$-Cartier divisors $E_{i,m}$  on $Y_m$ such that $A_{i,m}=\psi_m^*(L_i)-E_{i,m}$ and $\langle L_i\rangle =\lim_{m\rightarrow \infty}A_{i,m}$ in $L^{d-1}(\mathcal X)$ for $i=1,2$. We have that $\pi_X(A_{i,m})=\psi_{m,*}(A_{i,m})$ comes arbitrarily closed to $\pi_X(\langle L_j\rangle)=P_{\sigma}(L_j)=L_j$ in $L^{d-1}(X)$ by Lemma \ref{Lemma200}.

Let $s_L$ be the largest number such that $L_1-s_LL_2$ is pseudo effective and 
let $s_m$ be the largest number such that $A_{1,m}-s_mA_{2,m}$ is pseudo effective. 

We will now show that 
given $\epsilon>0$, there exists a positive integer $m_0$ such that $m>m_0$ implies $s_m<s_L+\epsilon$. Since ${\rm Psef}(X)$ is closed, there exists $\delta>0$ such that the open ball $B_{\delta}(L_1-(s_L+\epsilon)L_2)$ in $N^1(X)$ of radius $\delta$ centered at $L_1-(s_L+\epsilon)L_2$ is disjoint from ${\rm Psef}(X)$.  There exists $m_0$ such that $m\ge m_0$ implies
$\psi_{m*}(A_{1,m})\in B_{\frac{\delta}{2}}(L_1)$ and $\psi_{m*}(A_{2,m})\in B_{\frac{\delta}{(s_L+\epsilon)2}}(L_2)$. Thus
$\psi_{m*}(A_{1,m}-(s_L+\epsilon)A_{2,m})\not\in {\rm Psef}(X)$ for $m\ge m_0$ so that $s_m<s_L+\epsilon$.

By the Khovanski Teissier inequalities for nef and big divisors (\cite[Theorem 2.15]{BFJ} in characteristic zero, \cite[Corollary 6.3]{C}), 
\begin{equation}\label{eq14}
(A_{1,m}^{d-1}\cdot A_{2,m})^{\frac{d}{d-1}}\ge {\rm vol}(A_{1,m}){\rm vol}(A_{2,m})^{\frac{1}{d-1}}
\end{equation}
for all $m$. By Proposition \ref{Prop35}, taking limits as $m\rightarrow \infty$, we have
$$
\langle L_1^{d-1}\cdot L_2\rangle \ge {\rm vol}(L_1)^{\frac{d-1}{d}}{\rm vol}(L_2)^{\frac{1}{d}}.
$$
Now for each $m$, we have
$$
A_{1,m}^{d-1}\cdot \psi_m^*(L_2)=A_{1,m}^{d-1}\cdot (A_{2,m}+E_{2,m})\ge A_{1,m}^{d-1}\cdot A_{2,m}
$$
since $E_{2,m}$ is effective and $A_{1,m}$ is nef. Taking limits as $m\rightarrow \infty$, we have 
$\langle L_1^{d-1}\rangle \cdot L_2\ge \langle L_1^{d-1}\cdot L_2\rangle$. Thus
\begin{equation}\label{eq15}
\langle L_1^{d-1}\rangle\cdot L_2\ge \langle L_1^{d-1}\cdot L_2\rangle \ge {\rm vol}(L_1)^{\frac{d-1}{d}}{\rm vol}(L_2)^{\frac{1}{d}}.
\end{equation}
The Diskant inequality for big and nef divisors, \cite[Theorem 6.9]{C}, \cite[Theorem F]{BFJ} implies 
$$
(A_{1,m}^{d-1}\cdot A_{2,m})^{\frac{d}{d-1}}-{\rm vol}(A_{1,m}){\rm vol}(A_{2,m})^{\frac{1}{d-1}}
\ge ((A_{1,m}^{d-1}\cdot A_{2,m})^{\frac{1}{d-1}}-s_m{\rm vol}(A_{2,m})^{\frac{1}{d-1}})^d.
$$
We have that 
$(A_{1,m}^{d-1}\cdot A_{2,m})^{\frac{1}{d-1}}-s_m{\rm vol}(A_{2,m})^{\frac{1}{d-1}}\ge 0$ since 
$s_m^d\le\frac{{\rm vol}(A_{1,m})}{{\rm vol}(A_{2,m})}$ by Lemma \ref{Lemma22} and by (\ref{eq14}).

We have that 
$$
\begin{array}{lll}
\left[(A_{1,m}^{d-1}\cdot A_{2,m})^{\frac{d}{d-1}}-{\rm vol}(A_{1,m}){\rm vol}(A_{2,m})^{\frac{1}{d-1}}\right]^{\frac{1}{d}}
&\ge& (A_{1,m}^{d-1}\cdot A_{2,m})^{\frac{1}{d-1}}-s_m{\rm vol}(A_{2,m})^{\frac{1}{d-1}}\\
&\ge & (A_{1,m}^{d-1}\cdot A_{2,m})^{\frac{1}{d-1}}-(s_L+\epsilon){\rm vol}(A_{2,m})^{\frac{1}{d-1}}\end{array}
$$
for $m\ge m_0$.
Taking the limit as $m\rightarrow\infty$, we have 
\begin{equation}\label{eq16}
\langle L_1^{d-1} \cdot L_2\rangle ^{\frac{d}{d-1}}-{\rm vol}(L_1){\rm vol}(L_2)^{\frac{1}{d-1}}
\ge [\langle L_1^{d-1}\cdot L_2\rangle^{\frac{1}{d-1}}-s_L{\rm vol}(L_2)^{\frac{1}{d-1}}]^d.
\end{equation}

If $(\langle L_1^{d-1}\rangle \cdot L_2)^{\frac{d}{d-1}}={\rm vol}(L_1){\rm vol}(L_2)^{\frac{1}{d-1}}$ then
$\langle L_1^{d-1}\rangle \cdot L_2=\langle L_1^{d-1}\cdot L_2\rangle$ by (\ref{eq15}) and 
$(\langle L_1^{d-1}\rangle\cdot L_2)^{\frac{1}{d-1}}=s_L{\rm vol}(L_2)^{\frac{1}{d-1}}$, so that $s_L^d=\frac{{\rm vol}(L_1)}{{\rm vol}(L_2)}$ and thus $L_1$ and $L_2$ are proportional in $N^1(X)$ by Lemma \ref{Lemma22}.

Suppose $L_1$ and $L_2$ are proportional in $N^1(X)$, so that $L_1\equiv s_LL_2$ and $s_L^d=\frac{{\rm vol}(L_1)}{{\rm vol}(L_2)}$. Then
$$
\langle L_1^{d-1}\rangle \cdot L_2=s_L^{d-1}\langle L_2^{d-1}\rangle \cdot L_2=s_L^{d-1}\langle L_2^d\rangle
=\frac{{\rm vol}(L_1)^{\frac{d-1}{d}}}{{\rm vol}(L_2)^{\frac{d-1}{d}}}{\rm vol}(L_2)={\rm vol}(L_1)^{\frac{d-1}{d}}{\rm vol}(L_2)^{\frac{1}{d}}
$$
where the second equality is by (\ref{eq40}).
\end{proof}


The proof of the following theorem is deduced from Proposition \ref{Prop13} by extracting an argument from \cite[Theorem 4.11]{LX1}. Over algebraically closed fields of characteristic zero, it is \cite[Proposition 3.7]{LX2}.

\begin{Theorem}\label{Theorem18}
Let $L_1$ and $L_2$ be big and moveable $\RR$-Cartier divisors on a   $d$-dimensional projective variety $X$ over a field $k$. Then
\begin{equation}\label{eq97}
{\rm vol}(L_1+L_2)^{\frac{1}{d}}\ge {\rm vol}(L_1)^{\frac{1}{d}}+{\rm vol}(L_2)^{\frac {1}{d}}
\end{equation}
with equality if and only if $L_1$ and $L_2$ are proportional in $N^1(X)$.
\end{Theorem}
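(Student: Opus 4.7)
The plan is to deduce both the inequality and its equality case from Proposition \ref{Prop13} applied twice with $\alpha := L_1+L_2$ in the first slot. Since $\mathrm{Mov}(X)$ is the interior of the convex cone $\overline{\mathrm{Mov}}(X)$ and $\mathrm{Big}(X)$ is convex and open, $\alpha$ is again big and movable, so Proposition \ref{Prop13} applies to each pair $(\alpha,L_i)$, giving
$$
\langle \alpha^{d-1}\rangle \cdot L_i \ \ge\ {\rm vol}(\alpha)^{\frac{d-1}{d}}\,{\rm vol}(L_i)^{\frac{1}{d}},\qquad i=1,2,
$$
with equality if and only if $\alpha$ and $L_i$ are proportional in $N^1(X)$.

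Next I would use linearity of the intersection product in its last slot, together with (\ref{eq40}), to compute
$$
{\rm vol}(\alpha) \ =\ \langle \alpha^d\rangle \ =\ \langle \alpha^{d-1}\rangle\cdot\alpha \ =\ \langle \alpha^{d-1}\rangle\cdot L_1+\langle \alpha^{d-1}\rangle\cdot L_2.
$$
Combining this with the two instances of Proposition \ref{Prop13} yields
$$
{\rm vol}(\alpha) \ \ge\ {\rm vol}(\alpha)^{\frac{d-1}{d}}\bigl({\rm vol}(L_1)^{\frac{1}{d}}+{\rm vol}(L_2)^{\frac{1}{d}}\bigr),
$$
and dividing by ${\rm vol}(\alpha)^{(d-1)/d} > 0$ (valid since $\alpha$ is big, using Remark \ref{Remark50}) produces the Minkowski inequality (\ref{eq97}).

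For the equality case, if ${\rm vol}(L_1+L_2)^{1/d}={\rm vol}(L_1)^{1/d}+{\rm vol}(L_2)^{1/d}$, then both applications of Proposition \ref{Prop13} must hold with equality, so $\alpha$ is proportional to $L_1$ and to $L_2$ in $N^1(X)$; hence $L_1$ and $L_2$ are proportional. Conversely, if $L_2=cL_1$ for some $c>0$, then homogeneity of ${\rm vol}$ gives ${\rm vol}(L_1+L_2)^{1/d}=(1+c){\rm vol}(L_1)^{1/d}={\rm vol}(L_1)^{1/d}+{\rm vol}(L_2)^{1/d}$.

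I expect no serious obstacle here, since the substantive content has been packaged into Proposition \ref{Prop13}; the only thing requiring care is confirming that $L_1+L_2$ is big and movable so that Proposition \ref{Prop13} applies, and that the linear expansion $\langle\alpha^{d-1}\rangle\cdot\alpha=\langle\alpha^{d-1}\rangle\cdot L_1+\langle\alpha^{d-1}\rangle\cdot L_2$ is legitimate as an equality of real numbers (which it is, by linearity in the last slot combined with (\ref{eq40})). Everything else is the standard one-line manipulation used to pass from a Khovanskii--Teissier type inequality to the Minkowski form.
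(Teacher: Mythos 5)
Your proof is correct, and it takes a genuinely different route from the paper's. The paper deduces the Minkowski inequality from Proposition \ref{Prop13} by integrating: it invokes Theorem \ref{Theorem17} to write $\frac{d}{dt}{\rm vol}(L_1+tL_2)=d\langle(L_1+tL_2)^{d-1}\rangle\cdot L_2$, bounds the integrand pointwise on $[0,1]$ via Proposition \ref{Prop13}, and recovers ${\rm vol}(L_1+L_2)^{1/d}-{\rm vol}(L_1)^{1/d}\ge{\rm vol}(L_2)^{1/d}$ by the fundamental theorem of calculus; equality is then analyzed using continuity of the integrand. You instead set $\alpha=L_1+L_2$, apply Proposition \ref{Prop13} to the two pairs $(\alpha,L_1)$ and $(\alpha,L_2)$, and combine them through the linear expansion ${\rm vol}(\alpha)=\langle\alpha^{d-1}\rangle\cdot\alpha=\langle\alpha^{d-1}\rangle\cdot L_1+\langle\alpha^{d-1}\rangle\cdot L_2$ (using (\ref{eq40}) and the fact that $\langle\alpha^{d-1}\rangle\in L^1(\mathcal X)$ is a \emph{fixed} linear form once $\alpha$ is fixed). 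This is the classical mixed-volume argument, and it neatly avoids the integral. The equality analysis is also cleaner: equality of the sums forces equality term by term, and proportionality of each $L_i$ to $\alpha$ forces proportionality of $L_1$ to $L_2$. Two small caveats worth being aware of: first, your proof still relies implicitly on Theorem \ref{Theorem17}, because the paper derives (\ref{eq40}) from it, so you are not avoiding the differentiability result entirely, only the integration step; second, your justification that $\alpha$ is movable should be stated as "$\overline{\rm Mov}(X)$ is a convex cone, hence closed under addition" rather than appealing to $\mathrm{Mov}(X)$ being the interior, since $L_1,L_2$ are only assumed to lie in $\overline{\rm Mov}(X)$, not its interior. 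Neither affects the correctness of the argument.
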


\begin{proof} By Theorem \ref{Theorem17}, we have that 
$$
\frac{d}{dt}{\rm vol}(L_1+tL_2)=d\langle (L_1+tL_2)^{d-1}\rangle\cdot L_2
$$
for $t$ in a neighborhood of $[0,1]$. By Proposition \ref{Prop13}, 
$$
\langle (L_1+tL_2)^{d-1}\rangle\cdot L_2\ge {\rm vol}(L_1+tL_2)^{\frac{d-1}{d}}{\rm vol}(L_2)^{\frac {1}{d}}.
$$ 
Thus 
\begin{equation}\label{eq19}
\begin{array}{lll}
{\rm vol}(L_1+L_2)^{\frac{1}{d}}-{\rm vol}(L_1)^{\frac{1}{d}}&=&\int_0^1{\rm vol}(L_1+tL_2)^{\frac{1-d}{d}}\langle(L_1+tL_2)^{d-1}\rangle\cdot L_2dt\\
& \ge& \int_0^1{\rm vol}(L_1+tL_2)^{\frac{1-d}{d}}{\rm vol}(L_1+tL_2)^{\frac{d-1}{d}}{\rm vol}(L_2)^{\frac {1}{d}}dt\\
&=& \int_0^1{\rm vol}(L_2)^{\frac{1}{d}}dt={\rm vol}(L_2)^{\frac{1}{d}}.
\end{array}
\end{equation}

Since positive intersection products are continuous on big divisors, we have equality in (\ref{eq19}) if and only if
$$
\langle(L_1+tL_2)^{d-1}\rangle\cdot L_2={\rm vol}(L_1+tL_2)^{\frac{d-1}{d}}{\rm vol}(L_2)^{\frac{1}{d}}
$$
for $0\le t\le 1$. Thus if equality holds in (\ref{eq97}), then $L_1$ and $L_2$ are proportional in $N^1(X)$ by Proposition \ref{Prop13}.

Since ${\rm vol}$ is homogeneous, if $L_1$ and $L_2$ are proportional in $N^1(X)$, then equality holds in (\ref{eq97}).
\end{proof}

The following theorem is proven over algebraically closed fields of characteristic zero in \cite[Theorem 1.6]{LX2}.

\begin{Theorem}\label{Theorem20}  Let $X$ be a nonsingular $d$-dimensional projective variety over a field $k$. For any two big $\RR$-divisors $L_1$ and $L_2$ on $X$, 
$$
{\rm vol}(L_1+L_2)^{\frac{1}{d}}\ge {\rm vol}(L_1)^{\frac{1}{d}}+{\rm vol}(L_2)^{\frac {1}{d}}
$$
with equality if and only if $P_{\sigma}(L_1)$ and $P_{\sigma}(L_2)$ are proportional in $N^1(X)$.
\end{Theorem}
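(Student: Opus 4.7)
The inequality itself is Theorem~\ref{Ineq+}(4), so my plan is to concentrate on the equality characterization. The central idea will be to transfer the problem from the big divisors $L_i$ to their positive parts $P_\sigma(L_i)$, which are big (since ${\rm vol}(P_\sigma(L_i))={\rm vol}(L_i)>0$) and movable (by Lemma~\ref{Lemma31}(2)), so that Theorem~\ref{Theorem18} will apply to the pair $P_\sigma(L_1),P_\sigma(L_2)$. Writing $L_i=P_\sigma(L_i)+N_\sigma(L_i)$ yields the decomposition
\begin{equation*}
L_1+L_2 \;=\; \bigl[P_\sigma(L_1)+P_\sigma(L_2)\bigr]\;+\;\bigl[N_\sigma(L_1)+N_\sigma(L_2)\bigr],
\end{equation*}
and the effective correction $N_\sigma(L_1)+N_\sigma(L_2)$ is what must be controlled.

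For the forward direction, I will suppose equality in~(\ref{eq97}) and chain two inequalities: monotonicity of volume gives ${\rm vol}(L_1+L_2)\ge {\rm vol}(P_\sigma(L_1)+P_\sigma(L_2))$, and Theorem~\ref{Theorem18} applied to the big movable divisors $P_\sigma(L_i)$ yields
\begin{equation*}
{\rm vol}(L_1+L_2)^{1/d}\ge {\rm vol}(P_\sigma(L_1)+P_\sigma(L_2))^{1/d}\ge {\rm vol}(L_1)^{1/d}+{\rm vol}(L_2)^{1/d}={\rm vol}(L_1+L_2)^{1/d}.
\end{equation*}
Every inequality is then forced to be an equality, so the equality case of Theorem~\ref{Theorem18} will force $P_\sigma(L_1)$ and $P_\sigma(L_2)$ to be proportional in $N^1(X)$.

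For the reverse direction, assuming $P_\sigma(L_1)\equiv cP_\sigma(L_2)$ for some $c>0$ (both positive parts are nonzero since they are big), Theorem~\ref{Theorem18} immediately gives ${\rm vol}(P_\sigma(L_1)+P_\sigma(L_2))^{1/d}={\rm vol}(L_1)^{1/d}+{\rm vol}(L_2)^{1/d}$, so the remaining task is to prove the volume identity ${\rm vol}(L_1+L_2)={\rm vol}(P_\sigma(L_1)+P_\sigma(L_2))$. The plan here is to use Lemma~\ref{Lemma60}: applied to each identity $L_i=P_\sigma(L_i)+N_\sigma(L_i)$ (whose two sides automatically have equal volume), it shows $\mbox{Supp}(N_\sigma(L_i))\subset B_+^{\rm div}(P_\sigma(L_i))$ for $i=1,2$. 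By numerical invariance and positive homogeneity of $B_+$, the assumed proportionality then yields $B_+^{\rm div}(P_\sigma(L_1))=B_+^{\rm div}(P_\sigma(L_2))=B_+^{\rm div}(P_\sigma(L_1)+P_\sigma(L_2))$, so $\mbox{Supp}(N_\sigma(L_1)+N_\sigma(L_2))\subset B_+^{\rm div}(P_\sigma(L_1)+P_\sigma(L_2))$. A second appeal to Lemma~\ref{Lemma60}, this time in the direction that containment in $B_+^{\rm div}$ implies volume invariance, will deliver ${\rm vol}(L_1+L_2)={\rm vol}(P_\sigma(L_1)+P_\sigma(L_2))$, completing the proof.

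The main obstacle is precisely this reverse direction: although the movable-case Minkowski equality comes essentially for free from Theorem~\ref{Theorem18}, one must rule out the possibility that the effective excess $N_\sigma(L_1)+N_\sigma(L_2)$ strictly increases volume when added to $P_\sigma(L_1)+P_\sigma(L_2)$, and the natural monotonicity of volume goes the wrong way. The augmented base locus criterion of Lemma~\ref{Lemma60} is precisely what makes this step feasible over an arbitrary field, bypassing the multiplier ideal and Kodaira vanishing arguments used in \cite{LX2}.
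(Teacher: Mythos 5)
Your proposal matches the paper's proof essentially step for step: the forward direction chains ${\rm vol}(L_1+L_2)\ge{\rm vol}(P_\sigma(L_1)+P_\sigma(L_2))$ with Theorem~\ref{Theorem18} applied to the big movable positive parts, and the reverse direction deduces ${\rm Supp}(N_\sigma(L_i))\subset B_+^{\rm div}(P_\sigma(L_i))$ from Lemma~\ref{Lemma60} and then uses numerical invariance and homogeneity of $B_+^{\rm div}$ together with a second application of Lemma~\ref{Lemma60} to conclude. The only cosmetic difference is that you justify the first inequality by monotonicity of volume under adding an effective divisor, whereas the paper cites superadditivity of positive intersection products; these give the same bound and do not change the structure of the argument.
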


\begin{proof} 
We have ${\rm vol}(P_{\sigma}(L_i))={\rm vol}(L_i)$ for $i=1, 2$. Since $L_i=P_{\sigma}(L_i)+N_{\sigma}(L_i)$ for $i=1,2$ where  $P_{\sigma}(L_i)$ is pseudo effective and movable and $N_{\sigma}(L_i)$ is effective, we have by super additivity of positive intersection products of pseudo effective divisors  and Theorem \ref{Theorem18} that 
$$
{\rm vol}(L_1+L_2)^{\frac{1}{d}}\ge{\rm vol}(P_{\sigma}(L_1)+P_{\sigma}(L_2))^{\frac{1}{d}}
\ge {\rm vol}(P_{\sigma}(L_1))^{\frac{1}{d}}+{\rm vol}(P_{\sigma}(L_2))^{\frac{1}{d}}={\rm vol}(L_1)^{\frac{1}{d}}+{\rm vol}(L_2)^{\frac{1}{d}}.
$$
Thus if we have the equality ${\rm vol}(L_1+L_2)^{\frac{1}{d}}={\rm vol}(L_1)^{\frac{1}{d}}+{\rm vol}(L_2)^{\frac{1}{d}}$,
we have 
$$
{\rm vol}(P_{\sigma}(L_1)+P_{\sigma}(L_2))^{\frac{1}{d}}={\rm vol}(P_{\sigma}(L_1))^{\frac{1}{d}}+{\rm vol}(P_{\sigma}(L_2))^{\frac{1}{d}}.
$$
 Then $P_{\sigma}(L_1)$ and $P_{\sigma}(L_2)$ are proportional in $N^1(X)$ by Theorem \ref{Theorem18}.

Now suppose that $P_{\sigma}(L_1)$ and $P_{\sigma}(L_2)$ are proportional in $N^1(X)$. Then there exists $s\in \RR_{>0}$ such that $P_{\sigma}(L_2)\equiv sP_{\sigma}(L_1)$, so that  $B_+^{\rm div}(P_{\sigma}(L_1))=B_+^{\rm div}(P_{\sigma}(L_2))$. 
Since ${\rm vol}(L_i)={\rm vol}(P_{\sigma}(L_i))$ for $i=1,2$, we have that
$\mbox{Supp}(N_{\sigma}(L_1)), \mbox{Supp}(N_{\sigma}(L_2))\subset B_+^{\rm div}(P_{\sigma}(L_1))$ by Lemma \ref{Lemma60}.
Thus $\mbox{Supp}(N_{\sigma}(L_1)+N_{\sigma}(L_2))\subset B_+^{\rm div}(P_{\sigma}(L_1))$, so that by Lemma \ref{Lemma60},
$$
{\rm vol}(L_1+L_2)={\rm vol}(P_{\sigma}(L_1)+sP_{\sigma}(L_1))=(1+s)^d{\rm vol}(P_{\sigma}(L_1)).
$$ 
Thus 
$$
{\rm vol}(L_1+L_2)^{\frac{1}{d}}=(1+s){\rm vol}(P_{\sigma}(L_1))^{\frac{1}{d}}={\rm vol}(L_1)^{\frac{1}{d}}+{\rm vol}(L_2)^{\frac{1}{d}}.
$$
\end{proof}


\section{Characterization of equality in the Minkowski inequality}

\begin{Theorem}\label{Theorem21}  Let $X$ be a normal $d$-dimensional projective variety. For any two big $\RR$-Cartier divisors $L_1$ and $L_2$ on $X$, 
$$
{\rm vol}(L_1+L_2)^{\frac{1}{d}}\ge {\rm vol}(L_1)^{\frac{1}{d}}+{\rm vol}(L_2)^{\frac {1}{d}}.
$$
If equality holds, then $P_{\sigma}(L_1)=sP_{\sigma}(L_2)$ in $N_{d-1}(X)$, where
$s=\left(\frac{{\rm vol}(L_1)}{{\rm vol}(L_2)}\right)^{\frac{1}{d}}$.

\end{Theorem}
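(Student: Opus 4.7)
The plan is to reduce to the nonsingular case already handled in Theorem \ref{Theorem20}, by passing to an alteration $\phi\colon Y\to X$ with $Y$ a nonsingular projective variety (which exists by \cite{dJ}, as recalled just before Lemma \ref{Lemma21}). For the inequality, I apply Theorem \ref{Theorem20} to the big $\RR$-divisors $\phi^*L_1$ and $\phi^*L_2$ on $Y$ to obtain
\[
{\rm vol}(\phi^*L_1+\phi^*L_2)^{1/d}\ge {\rm vol}(\phi^*L_1)^{1/d}+{\rm vol}(\phi^*L_2)^{1/d},
\]
and then invoke the birational transformation formula (\ref{eq43}), ${\rm vol}(\phi^*L)=\deg(Y/X)\,{\rm vol}(L)$, to divide through by $\deg(Y/X)^{1/d}$ and descend the inequality to $X$.

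Now suppose equality holds on $X$. Multiplying by $\deg(Y/X)^{1/d}$ and using (\ref{eq43}) shows equality also holds on $Y$, so Theorem \ref{Theorem20} gives that $P_\sigma(\phi^*L_1)$ and $P_\sigma(\phi^*L_2)$ are proportional in $N^1(Y)$; write $P_\sigma(\phi^*L_1)\equiv s'\,P_\sigma(\phi^*L_2)$ for some $s'>0$. To identify $s'$, I use ${\rm vol}(\phi^*L_i)={\rm vol}(P_\sigma(\phi^*L_i))$ together with (\ref{eq43}):
\[
\deg(Y/X)\,{\rm vol}(L_1)={\rm vol}(P_\sigma(\phi^*L_1))=(s')^d\,{\rm vol}(P_\sigma(\phi^*L_2))=(s')^d\deg(Y/X)\,{\rm vol}(L_2),
\]
which forces $s'=s=\bigl({\rm vol}(L_1)/{\rm vol}(L_2)\bigr)^{1/d}$.

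Finally I push the proportionality down from $Y$ to $X$. Since $Y$ is nonsingular, $N^1(Y)=N_{d-1}(Y)$ by Lemma \ref{Lemma55}, so the relation $P_\sigma(\phi^*L_1)\equiv s\,P_\sigma(\phi^*L_2)$ also holds in $N_{d-1}(Y)$. Combining the pushforward $\phi_*\colon N_{d-1}(Y)\to N_{d-1}(X)$ with (\ref{eq41}) and (\ref{eqNew20}) yields
\[
\phi_*P_\sigma(\phi^*L_i)=\phi_*[\phi^*L_i]-\phi_*N_\sigma(\phi^*L_i)=\deg(Y/X)\bigl([L_i]-N_\sigma(L_i)\bigr)=\deg(Y/X)\,P_\sigma(L_i)
\]
in $N_{d-1}(X)$, for $i=1,2$. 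Applying $\phi_*$ to the relation on $Y$ and dividing by the nonzero integer $\deg(Y/X)$ gives the desired identity $P_\sigma(L_1)=s\,P_\sigma(L_2)$ in $N_{d-1}(X)$.

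The main obstacle, such as it is, is not any single deep step but rather the bookkeeping around the divisorial Zariski decomposition on the possibly singular normal base $X$: one must interpret $N_\sigma(L_i)$ and $P_\sigma(L_i)$ as $\RR$-Weil divisors through the FKL framework of Subsection \ref{SubSecMN}, and verify that (\ref{eqNew20}) is being applied to precisely these objects, which is where the normality of $X$ is used. Everything else is a clean alteration-descent argument.
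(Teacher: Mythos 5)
Your proof is correct and follows essentially the same route as the paper: pass to an alteration $\phi\colon Y\to X$ with $Y$ nonsingular, apply the nonsingular case (Theorem \ref{Theorem20}) on $Y$, descend the inequality via the volume formula (\ref{eq43}), and push the proportionality $P_\sigma(\phi^*L_1)\equiv s\,P_\sigma(\phi^*L_2)$ down to $N_{d-1}(X)$ using $\phi_*P_\sigma(\phi^*L_i)=\deg(Y/X)\,P_\sigma(L_i)$, which combines (\ref{eq41}) with (\ref{eqNew20}). The one small place where you are slightly more explicit than the paper is in invoking Lemma \ref{Lemma55} to identify $N^1(Y)$ with $N_{d-1}(Y)$ before pushing forward; otherwise the arguments agree step for step, including the identification of the constant $s$ from the homogeneity of volume.
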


\begin{proof} Here we use the extension of $\sigma$-decomposition to $\RR$-Weil divisors on a normal projective variety of \cite{FKL}.
Let $\phi:Y\rightarrow X$ be an alteration. We have that $\phi^*L_1$ and $\phi^*L_2$ are big $\RR$-Cartier divisors. By \cite[Lemma 4.12]{FKL}, for $i=1,2$,
$\phi_*N_{\sigma}(\phi^*L_i)=\deg(Y/X)\,N_{\sigma}(L_i)$. Since $\phi_*\phi^*L=\deg(Y/X)\,L$ by (\ref{eq41}), we have that 
$\phi_*P_{\sigma}(\phi^*L_i)=\deg(Y/X)\,P_{\sigma}(L_i)$. Now ${\rm vol}(\phi^*L_i)=\deg(Y/X)\, {\rm vol}(L_i)$ for $i=1,2$ and 
${\rm vol}(\phi^*L_1+\phi^*L_2)=\deg(Y/X)\,{\rm vol}(L_1+L_2)$ by (\ref{eq43}).

Thus the inequality of the statement of the theorem holds for $L_1$ and $L_2$ since it holds for $\phi^*L_1$ and $\phi^*L_2$ by Theorem \ref{Theorem20}.
Suppose that equality holds in the inequality. Then by Theorem \ref{Theorem20}, we have that there exists $s\in \RR_{>0}$ such that $P_{\sigma}(\phi^*L_1)=sP_{\sigma}(\phi^*L_2)$ in $N^1(Y)$. Thus $\phi_*P_{\sigma}(\phi^*L_1)=s\phi_*P_{\sigma}(\phi^*L_2)$
in $N_{d-1}(X)$, so that $P_{\sigma}(L_1)=sP_{\sigma}(L_2)$ in $N_{d-1}(X)$. Since volume is homogeneous and $P_{\sigma}(\phi^*L_1)$, $sP_{\sigma}(\phi^*L_2)$ are numerically equivalent $\RR$-Cartier divisors, 
$$
\frac{{\rm vol}(L_1)}{{\rm vol}(L_2)}=\frac{{\rm vol}(\phi^*L_1)}{{\rm vol}(\phi^*L_2)}
=\frac{{\rm vol}(P_{\sigma}(\phi^*L_1))}{{\rm vol}(P_{\sigma}(\phi^*L_2))}=s^d.
$$

\end{proof}

\begin{Theorem}\label{Theorem22}  Let $X$ be a  $d$-dimensional projective variety over a field $k$. For any two big $\RR$-Cartier divisors $L_1$ and $L_2$ on $X$, 
\begin{equation}\label{Neweq20}
{\rm vol}(L_1+L_2)^{\frac{1}{d}}\ge {\rm vol}(L_1)^{\frac{1}{d}}+{\rm vol}(L_2)^{\frac {1}{d}}
\end{equation}
with equality if and only if $\langle L_1\rangle $ and $\langle L_2\rangle$ are proportional in $L^{d-1}(\mathcal X)$. 
When this occurs, we have that $\langle L_1\rangle =s\langle L_2\rangle$  in $L^{d-1}(\mathcal X)$, where 
$s=\left(\frac{{\rm vol}(L_1)}{{\rm vol}(L_2)}\right)^{\frac{1}{d}}$.
\end{Theorem}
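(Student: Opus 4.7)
The Minkowski inequality on $X$ reduces to the normal case: if $f:\overline{X}\to X$ is the normalization, then $f$ is birational, so ${\rm vol}(f^*D)={\rm vol}(D)$ by (\ref{eq43}) for any $\RR$-Cartier divisor $D$, and Theorem \ref{Theorem21} applied on $\overline{X}$ to $f^*L_1,f^*L_2$ yields the desired inequality on $X$.

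For the forward direction of the equality characterization, assume Minkowski equality and put $s=({\rm vol}(L_1)/{\rm vol}(L_2))^{1/d}$. The normal members of $I(X)$ form a cofinal system (each $Y\in I(X)$ admits its normalization, which also lies in $I(X)$), so it suffices to verify $\pi_Y(\langle L_1\rangle)=s\pi_Y(\langle L_2\rangle)$ in $L^{d-1}(Y)$ for every normal $Y\in I(X)$ with birational morphism $g_Y:Y\to X$. Birationality preserves volumes, so Minkowski equality persists on such $Y$; Theorem \ref{Theorem21} then gives $P_{\sigma}(g_Y^*L_1)=sP_{\sigma}(g_Y^*L_2)$ in $N_{d-1}(Y)$, and this converts, via Lemma \ref{Lemma200} and the inclusions $N_{d-1}(Y)\hookrightarrow WN^1(Y)\subset L^{d-1}(Y)$, into the sought equality of projections.

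For the reverse direction, assume $\langle L_1\rangle=s\langle L_2\rangle$ in $L^{d-1}(\mathcal{X})$. Pairing this equality of $(d-1)$-linear forms with the tuples $(L_1^{d-i-1},L_2^i)$ for $i=0,\ldots,d-1$, and using the compatibility $\langle L\rangle\cdot\alpha_1\cdots\alpha_{d-1}=\langle L\cdot\alpha_1\cdots\alpha_{d-1}\rangle$ implicit in the definition of positive intersection products, yields $\langle L_1^{d-i}L_2^i\rangle=s\langle L_1^{d-i-1}L_2^{i+1}\rangle$; telescoping gives ${\rm vol}(L_1)=s^d{\rm vol}(L_2)$, pinning down $s$. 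To derive the Minkowski equality, set $W(t)={\rm vol}(L_2+tL_1)^{1/d}$ on $[0,1]$. The identity
\[
L_2+(\lambda u+(1-\lambda)v)L_1=\lambda(L_2+uL_1)+(1-\lambda)(L_2+vL_1)
\]
combined with the already established Minkowski inequality shows $W$ is concave on $[0,1]$. By Theorem \ref{Theorem17} and the hypothesis,
\[
W'(0)={\rm vol}(L_2)^{(1-d)/d}\langle L_2^{d-1}\rangle\cdot L_1=s\,{\rm vol}(L_2)^{1/d}={\rm vol}(L_1)^{1/d},
\]
using $\langle L_2^{d-1}\rangle\cdot L_1=\langle L_1\rangle\cdot L_2^{d-1}=s\langle L_2\rangle\cdot L_2^{d-1}=s\,{\rm vol}(L_2)$. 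The tangent bound at $0$ for the concave function $W$ then forces $W(1)\le W(0)+W'(0)={\rm vol}(L_2)^{1/d}+{\rm vol}(L_1)^{1/d}$, and combining with the Minkowski inequality gives equality. The principal technical point is the compatibility between the pairing of multilinear forms and positive intersection products; once it is in hand, the reverse direction becomes a tidy concavity calculation that bypasses any converse of Theorem \ref{Theorem21}.
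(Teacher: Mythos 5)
Your treatment of the inequality and the forward direction of the equality statement is essentially the paper's argument: pull back to normal $Y\in I(X)$, apply Theorem \ref{Theorem21}, and use Lemma \ref{Lemma200} with the cofinality of the normal models in $I(X)$ to convert $P_{\sigma}(g_Y^*L_1)=sP_{\sigma}(g_Y^*L_2)$ into $\langle L_1\rangle=s\langle L_2\rangle$ in $L^{d-1}(\mathcal X)$. That part is fine.

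The reverse direction is where there is a genuine gap. You invoke, as ``implicit in the definition,'' the compatibility $\langle L\rangle\cdot\alpha_1\cdots\alpha_{d-1}=\langle L\cdot\alpha_1\cdots\alpha_{d-1}\rangle$, and in the computation of $W'(0)$ you use the swap $\langle L_2^{d-1}\rangle\cdot L_1=\langle L_1\rangle\cdot L_2^{d-1}$. Neither is established in the paper, and neither is true in general: the two pairings come from genuinely different optimizations. $\langle L_2^{d-1}\rangle\cdot L_1$ is $\sup A_1\cdots A_{d-1}\cdot L_1$ over nef $A_j\le L_2$ on models, while $\langle L_1\rangle\cdot L_2^{d-1}$ is $\sup A\cdot L_2^{d-1}$ over a single nef $A\le L_1$. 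A concrete counterexample to the swap: on the blowup of $\PP^2$ at a point, with $H$ the pullback of a line and $E$ the exceptional curve, take $L_1=H-\tfrac12 E$ (ample) and $L_2=2H+E$ (big, not nef, $\langle L_2\rangle=P_\sigma(L_2)=2H$); then $\langle L_2\rangle\cdot L_1=2H\cdot(H-\tfrac12 E)=2$ while $\langle L_1\rangle\cdot L_2=(H-\tfrac12 E)\cdot(2H+E)=\tfrac52$. The paper itself records that the one-sided expression may dominate the joint positive product (see the first inequality in (\ref{eq15})), so these quantities are not interchangeable. Similarly, $\langle L_2\rangle\cdot L_2^{d-1}={\rm vol}(L_2)$ is not what (\ref{eq40}) gives; (\ref{eq40}) gives $\langle L_2^{d-1}\rangle\cdot L_2={\rm vol}(L_2)$, and passing to your expression requires the same unproved swap. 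Once these steps are removed, the telescoping that pins down $s$ and the identity $W'(0)={\rm vol}(L_1)^{1/d}$ both collapse.

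The concavity-plus-tangent-line strategy itself is a legitimate alternative to the paper's terse appeal to Proposition \ref{Prop35} and homogeneity, and it does bypass any need for a converse to Theorem \ref{Theorem21}. But to make it work you must prove, under the hypothesis $\langle L_1\rangle=s\langle L_2\rangle$, that $\langle L_2^{d-1}\rangle\cdot(L_1-sL_2)=0$, using Lemma \ref{Lemma200} (so that $f^*L_1-sf^*L_2\equiv N_\sigma(f^*L_1)-sN_\sigma(f^*L_2)$ on normal models) together with a Zariski-type orthogonality for the nef classes approximating $\langle L_2^{d-1}\rangle$ against the negative parts. That computation is exactly the content that is missing; it is not automatic from the definitions.
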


In the case that $D_1$ and $D_2$ are nef and big, this is proven in \cite[Theorem 2.15]{BFJ} (over an algebraically closed field of characteristic zero) and in \cite[Theorem 6.13]{C} (over an arbitrary field). In this case of nef divisors, the condition that 
$\langle L_1\rangle $ and $\langle L_2\rangle$ are proportional in $L^{d-1}(\mathcal X)$ is just that $D_1$ and $D_2$ are proportional in $N^1(X)$. 

Theorem \ref{Theorem22} is obtained in the case that $D_1$ and $D_2$ are big and movable and $k$ is an algebraically closed field of characteristic zero in \cite[Proposition 3.7]{LX2}. In this case the condition for equality is that $D_1$ and $D_2$ are proportional in $N^1(X)$.  Theorem \ref{Theorem22} is established in the case that $D_1$ and $D_2$ are big $\RR$-Cartier divisors and $X$ is nonsingular, over an algebraically closed field $k$ of characteristic zero in \cite[Theorem 1.6]{LX2}.  In this case, the condition for equality is that the positive parts of the $\sigma$ decompositions of $D_1$ and $D_2$ are proportional; that is, $P_{\sigma}(D_1)$ and $P_{\sigma}(D_2)$ are proportional in $N^1(X)$.

\begin{proof} Let $f:Y\rightarrow X\in I(X)$ with $Y$ normal. Then ${\rm vol}(f^*(L_1)+f^*(L_2))={\rm vol}(L_1+L_2)$ and ${\rm vol}(f^*L_j)={\rm vol}(L_j)$ for $j=1,2$ so that the inequality (\ref{Neweq20}) holds by Theorem \ref{Theorem21}. 

Suppose that equality holds in (\ref{Neweq20}). Let
$s=\left(\frac{{\rm vol}(L_2)}{{\rm vol}(L_1)}\right)^{\frac{1}{d}}$.
Then by Theorem \ref{Theorem21}, $P_{\sigma}(f^*L_1)=sP_{\sigma}(f^*L_2)$ in $N_{d-1}(Y)$.
Thus $\pi_Y(\langle L_1\rangle)=s\pi_Y(\langle L_2\rangle)$ by (\ref{eq91}). Since the normal $Y\in I(X)$ are cofinal in $I(X)$, we have that 
$\langle L_1\rangle =s\langle L_2\rangle$. 

Suppose that $\langle L_1\rangle=s\langle L_2\rangle$  in $L^{d-1}(\mathcal X)$ for some $s\in \RR_{>0}$. Then equality holds in (\ref{Neweq20}) by Proposition \ref{Prop35} and the fact that the positive intersection product is homogeneous. 
\end{proof}

\begin{Definition} Suppose that $X$ is a projective variety and $\alpha,\beta\in N^1(X)$. The slope of $\beta$ with respect to $\alpha$ is the smallest real number $s=s(\alpha,\beta)$ such that $\langle \alpha\rangle \ge s\langle \beta\rangle$.
\end{Definition}

Let $X$ be a projective variety and $f:Z\rightarrow X$ be a resolution of singularities.
Suppose that $L_1$ and $L_2$ are $\RR$-Cartier divisors on $X$. Let $\overline L_1=f^*(L_1)$ and $\overline L_2=f^*L_2$.
Suppose that $\phi:Y\rightarrow Z$ is a birational morphism of nonsingular projective varieties where $Y$ is nonsingular and $t\in \RR$. We will show that
\begin{equation}\label{eqZ1}
P_{\sigma}(\overline L_1)-tP_{\sigma}(\overline L_2)\mbox{ is pseudo effective if and only if }
P_{\sigma}(\phi^*\overline L_1)-tP_{\sigma}(\phi^*\overline L_2)\mbox{ is pseudo effective.}
\end{equation}
 The fact that $P_{\sigma}(\overline L_1)-tP_{\sigma}(\overline L_2)$ pseudo effective implies 
$P_{\sigma}(\phi^*\overline L_1)-tP_{\sigma}(\phi^*\overline L_2)$ pseudo effective follows from Lemma \ref{Lemma21}.
 If $P_{\sigma}(\phi^*\overline L_1)-tP_{\sigma}(\phi^*\overline L_2)$ is pseudo effective, then
$\phi_*(P_{\sigma}(\phi^*\overline L_1)-tP_{\sigma}(\phi^*\overline L_2))=P_{\sigma}(\overline L_1)-tP_{\sigma}(\overline L_2)$ is pseudo effective.

Let $s=s(L_1,L_2)$.  Since the $Y\rightarrow Z$ with $Y$ nonsingular are cofinal in $I(X)$, we have that 
\begin{equation}\label{eqZ2}
\begin{array}{l}
\mbox{$s$ is the largest positive number such that
$\pi_Z(\langle L_1\rangle -s\langle L_2\rangle)=P_{\sigma}(\overline L_1)-sP_{\sigma}(\overline L_2)$}\\
\mbox{is pseudo effective.}
\end{array}
\end{equation}

\begin{Proposition}\label{NewProp1} Suppose that $X$ is a variety over a field of characteristic zero and $L_1$, $L_2$ are big $\RR$-Cartier divisors on $X$. Let $s=s(L_1,L_2)$. Then 
\begin{equation}\label{Neweq1}
s^d\le\frac{\langle L_1^d\rangle}{\langle L_2^d\rangle}
\end{equation}
and we have equality in this equation if and only if 
$\langle L_1\rangle$ is proportional to $\langle L_2\rangle$ in $L^{d-1}(\mathcal X)$. If we have equality, then 
$\langle L_1\rangle=s\langle L_2\rangle$ in $L^{d-1}(\mathcal X)$.
\end{Proposition}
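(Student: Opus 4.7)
The plan is to reduce to the big and movable case proved in Lemma~\ref{Lemma22} by pulling back to a resolution of singularities and taking $\sigma$-decompositions, then lifting the conclusion back to $L^{d-1}(\mathcal X)$ via Lemma~\ref{Lemma200} and the cofinality of nonsingular models in $I(X)$.

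First, using characteristic zero, pick a resolution of singularities $f\colon Z\to X$ and write $\overline L_i=f^*L_i$; these are big $\RR$-Cartier divisors on the nonsingular $Z$, and by Lemma~\ref{Lemma31} the positive parts $P_{\sigma}(\overline L_i)$ are big and movable. By Lemma~\ref{Lemma200}, $\pi_Z(\langle L_i\rangle)=P_{\sigma}(\overline L_i)$, and by (\ref{eqZ2}), $s=s(L_1,L_2)$ is exactly the largest real number for which $P_{\sigma}(\overline L_1)-sP_{\sigma}(\overline L_2)$ is pseudo effective on $Z$. Applying Lemma~\ref{Lemma22} to the big movable divisors $P_{\sigma}(\overline L_1)$ and $P_{\sigma}(\overline L_2)$ on $Z$, and using that ${\rm vol}(P_{\sigma}(D))={\rm vol}(D)$, ${\rm vol}(f^*L_i)={\rm vol}(L_i)$, and ${\rm vol}(L_i)=\langle L_i^d\rangle$, we obtain
$$
s^d\le \frac{{\rm vol}(P_{\sigma}(\overline L_1))}{{\rm vol}(P_{\sigma}(\overline L_2))}=\frac{\langle L_1^d\rangle}{\langle L_2^d\rangle},
$$
which is (\ref{Neweq1}).

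When equality holds, Lemma~\ref{Lemma22} further gives $P_{\sigma}(\overline L_1)=sP_{\sigma}(\overline L_2)$ in $N^1(Z)$. The same argument applied with $Z$ replaced by any other nonsingular $Y\in I(X)$ yields $P_{\sigma}(f_Y^*L_1)=sP_{\sigma}(f_Y^*L_2)$ on $Y$, and hence $\pi_Y(\langle L_1\rangle)=s\pi_Y(\langle L_2\rangle)$ in $L^{d-1}(Y)$ by Lemma~\ref{Lemma200}. Since nonsingular models are cofinal in $I(X)$ in characteristic zero, passing to the inverse limit (\ref{eq300}) gives $\langle L_1\rangle=s\langle L_2\rangle$ in $L^{d-1}(\mathcal X)$.

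Conversely, suppose $\langle L_1\rangle=s'\langle L_2\rangle$ in $L^{d-1}(\mathcal X)$ for some $s'>0$. Applying $\pi_Z$ and using injectivity of $N^1(Z)\hookrightarrow L^{d-1}(Z)$ together with Lemma~\ref{Lemma200}, we get $P_{\sigma}(\overline L_1)=s'P_{\sigma}(\overline L_2)$ in $N^1(Z)$; homogeneity of volume then yields $(s')^d={\rm vol}(L_1)/{\rm vol}(L_2)=\langle L_1^d\rangle/\langle L_2^d\rangle$. Since $\langle L_1\rangle\ge s'\langle L_2\rangle$ is trivial from the equality, $s\ge s'$; combined with $s^d\le(s')^d$ already established, this forces $s=s'$ and equality throughout. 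The main work is the translation between slopes and positive intersection products on $X$ and $\sigma$-decomposition data on a resolution, encapsulated in Lemma~\ref{Lemma200} and (\ref{eqZ2}); once that is in place everything reduces to Lemma~\ref{Lemma22} and homogeneity of volume, and characteristic zero enters only through the availability of resolution of singularities.
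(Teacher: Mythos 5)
Your proof is correct and follows essentially the same route as the paper: pass to a resolution $f\colon Z\to X$, use Lemma~\ref{Lemma200} and~(\ref{eqZ2}) to translate $\langle L_i\rangle$ and the slope $s$ into $\sigma$-decomposition data $P_{\sigma}(\overline L_i)$ on $Z$, apply a Minkowski-type inequality there, and then lift the equality case back to $L^{d-1}(\mathcal X)$ via cofinality of nonsingular models. The only difference is that you invoke Lemma~\ref{Lemma22} (big movable divisors on general varieties, proved via alterations) where the paper applies Lemma~\ref{Lemma10} directly to the big $\RR$-divisors $P_{\sigma}(\overline L_i)$ on the nonsingular $Z$ and uses $P_{\sigma}\circ P_{\sigma}=P_{\sigma}$; both are valid since the $P_{\sigma}(\overline L_i)$ are big and movable on $Z$, so this is a cosmetic variation rather than a different argument. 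One small point you should make explicit: Lemma~\ref{Lemma22} only asserts that $P_{\sigma}(\overline L_1)$ and $P_{\sigma}(\overline L_2)$ are proportional, and the proportionality constant is identified with $s$ either via (\ref{eqZ2}) and bigness of $P_{\sigma}(\overline L_2)$, or via homogeneity of volume; you assert the constant is $s$ without saying why.
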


\begin{proof} Let $Y\in I(X)$ be nonsingular, with birational morphism $f:Y\rightarrow X$. Then by Lemma \ref{Lemma200},
$$
P_{\sigma}(f^*L_1)-sP_{\sigma}(f^*L_2)=
\pi_Y(\langle L_1\rangle) -s\langle L_2\rangle)\in{\rm Psef}(Y).
$$
Thus by Lemma \ref{Lemma10},
$$
s^d\le    \frac{{\rm vol}(P_{\sigma}(f^*L_1))}{{\rm vol}(P_{\sigma}(f^*L_2))}=
\frac{{\rm vol}(L_1)}{{\rm vol}(L_2)}=\frac{\langle L_1^d\rangle}{\langle L_2^d\rangle},
$$
and so the inequality (\ref{Neweq1}) holds. 

Suppose we have equality in (\ref{Neweq1}).
Let  $Y\in I(X)$ be nonsingular with morphism $f:Y\rightarrow X$. We have that 
$\pi_Y(\langle L_1\rangle)-s\pi_Y(\langle L_2\rangle)=
P_{\sigma}(f^*L_1)-sP_{\sigma}(f^*L_2)$ is pseudo effective and $s^d=\frac{{\rm vol}(P_{\sigma}(f^*L_1))}{{\rm vol}(P_{\sigma}(f^*L_2)}$, so we have that $P_{\sigma}(f^*L_1)= sP_{\sigma}(f^*L_2)$ in $N^1(Y)$ by (\ref{eqZ2}) and Lemma \ref{Lemma10}. Since the nonsingular $Y$ are cofinal in $I(X)$, we have that $\langle L_1\rangle=s\langle L_2\rangle$ by Lemma \ref{Lemma200} and (\ref{eq300}).

Suppose that $\langle L_1\rangle=t\langle L_2\rangle$ for some $t\in \RR_{>0}$. Then $s=t$ and by Proposition \ref{Prop35}, 
$$
\langle L_1^d\rangle=\langle L_1\rangle\cdot \ldots\cdot \langle L_1\rangle
=\langle sL_2\rangle\cdot \ldots\cdot \langle sL_2\rangle
=s^d \langle L_2\rangle\cdot \ldots\cdot \langle L_2\rangle=s^d\langle L_2^d\rangle .
$$
\end{proof}

\begin{Theorem}\label{PropNew60}(Diskant inequality for big divisors) 
Suppose that $X$ is a  projective $d$-dimensional variety over a field $k$ of characteristic zero and $L_1,L_2$
 are big  $\RR$-Cartier divisors on $X$.  Then 
 \begin{equation}\label{eq16*}
\langle L_1^{d-1} \cdot L_2\rangle ^{\frac{d}{d-1}}-{\rm vol}(L_1){\rm vol}(L_2)^{\frac{1}{d-1}}
\ge [\langle L_1^{d-1}\cdot L_2\rangle^{\frac{1}{d-1}}-s(L_1,L_2){\rm vol}(L_2)^{\frac{1}{d-1}}]^d.
\end{equation} 
 \end{Theorem}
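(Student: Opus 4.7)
The plan is to adapt the approximation argument used in the proof of Proposition \ref{Prop13} (which already establishes Diskant's inequality in the movable case as its equation (\ref{eq16})). To handle arbitrary big divisors, I would first reduce to the case in which $X$ is nonsingular using resolution of singularities (available in characteristic zero), and then control the slopes of the approximating nef divisors through the characterization (\ref{eqZ2}) of $s(L_1,L_2)$, rather than the $N^1$-slope that sufficed in the movable setting.

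For the reduction, let $g\colon Z \to X$ be a resolution and set $\overline L_i = g^*L_i$. By (\ref{eq43}) we have ${\rm vol}(\overline L_i) = {\rm vol}(L_i)$. Because $I(Z)$ is cofinal in $I(X)$, the spaces $L^{d-1}(\mathcal Z)$ and $L^{d-1}(\mathcal X)$ agree via (\ref{eq300}); under this identification $\langle L_1^{d-1}\cdot L_2\rangle = \langle \overline L_1^{d-1}\cdot \overline L_2\rangle$ and $s(L_1,L_2) = s(\overline L_1,\overline L_2)$. We may thus replace $X$ by $Z$ and assume $X$ is nonsingular throughout.

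Following the proof of Proposition \ref{Prop13}, Fujita approximation produces, for each $m$, a birational morphism $\psi_m\colon Y_m \to X$ with $Y_m$ nonsingular, nef and big $\RR$-Cartier divisors $A_{i,m}$, and effective $\RR$-Cartier divisors $E_{i,m}$ on $Y_m$ such that $A_{i,m}=\psi_m^*L_i-E_{i,m}$ and $A_{i,m}\to \langle L_i\rangle$ in $L^{d-1}(\mathcal X)$ as $m\to\infty$; in particular $(A_{1,m}^{d-1}\cdot A_{2,m})_{Y_m}\to \langle L_1^{d-1}\cdot L_2\rangle$ and ${\rm vol}(A_{i,m})\to {\rm vol}(L_i)$. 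By Lemma \ref{Lemma200} combined with (\ref{eq41}), the pushforwards $\psi_{m*}A_{i,m}=\pi_X(A_{i,m})$ converge to $\pi_X(\langle L_i\rangle)=P_{\sigma}(L_i)$ in $N^1(X)$. The Diskant inequality for nef and big divisors \cite[Theorem 6.9]{C} applied to each pair then yields
\begin{equation*}
(A_{1,m}^{d-1}\cdot A_{2,m})^{\frac{d}{d-1}} - {\rm vol}(A_{1,m}){\rm vol}(A_{2,m})^{\frac{1}{d-1}} \ge \left[(A_{1,m}^{d-1}\cdot A_{2,m})^{\frac{1}{d-1}} - s_m\, {\rm vol}(A_{2,m})^{\frac{1}{d-1}}\right]^d,
\end{equation*}
where $s_m$ denotes the largest positive real with $A_{1,m}-s_m A_{2,m}$ pseudo effective on $Y_m$; the bracketed quantity is non-negative by the Khovanskii--Teissier inequality combined with Lemma \ref{Lemma10}.

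The main obstacle is bounding $\limsup_m s_m$ by $s:=s(L_1,L_2)$ so that one can pass to the limit. Here we invoke (\ref{eqZ2}): since $X$ is nonsingular, we may take the resolution there to be $X$ itself, and $s$ is precisely the largest real such that $P_{\sigma}(L_1)-sP_{\sigma}(L_2)$ is pseudo effective in $N^1(X)$. Given $\epsilon>0$, closedness of ${\rm Psef}(X)$ produces an open ball around $P_{\sigma}(L_1)-(s+\epsilon)P_{\sigma}(L_2)$ disjoint from ${\rm Psef}(X)$. Since $\psi_{m*}(A_{1,m}-(s+\epsilon)A_{2,m})$ converges to $P_{\sigma}(L_1)-(s+\epsilon)P_{\sigma}(L_2)$ in $N^1(X)$, for $m$ large this pushforward lies outside ${\rm Psef}(X)$; and because $\psi_{m*}$ preserves pseudo effectivity, $A_{1,m}-(s+\epsilon)A_{2,m}$ itself fails to be pseudo effective on $Y_m$, whence $s_m<s+\epsilon$. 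Letting $m\to\infty$ in the displayed Diskant inequality with $s_m$ bounded above by $s+\epsilon$, and then $\epsilon\to 0^+$, delivers (\ref{eq16*}).
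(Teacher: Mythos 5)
Your proposal is correct and follows essentially the same route as the paper's proof: reduce to nonsingular $X$ by resolution, take a Fujita approximation by nef divisors $A_{i,m}$ on models $Y_m$, control $s_m$ by $s+\epsilon$ via (\ref{eqZ2}), closedness of ${\rm Psef}(X)$, and convergence of the pushforwards $\psi_{m*}A_{i,m}$ to $P_{\sigma}(L_i)$, then apply the nef Diskant inequality of \cite[Theorem 6.9]{C} and pass to the limit. The only cosmetic difference is that you invoke Lemma \ref{Lemma10} (after noting $Y_m$ may be taken nonsingular in characteristic zero) to see the bracketed term is nonnegative, whereas the paper uses Lemma \ref{Lemma22} for the same purpose; both are valid since the $A_{i,m}$ are nef and big, hence movable.
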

 
 The Diskant inequality is proven for nef and big divisors in \cite[Theorem G]{BFJ} in characteristic zero and in  \cite[Theorem 6.9]{C} for nef and big divisors over an arbitrary field. In the case that $D_1$ and $D_2$ are nef and big, the condition  that $\langle D_1\rangle - s \langle D_2\rangle$ is pseudo effective in $L^{d-1}(\mathcal X)$ is that $D_1-sD_2$ is pseudo effective in $N^1(X)$. The Diskant inequality is proven when $D_1$ and $D_2$ are big and movable divisors and $X$ is a projective variety over an algebraically closed field of characteristic zero in \cite[Proposition 3.3, Remark 3.4]{LX2}.  Theorem \ref{PropNew60} is a consequence of   \cite[Theorem 3.6]{DF}.

\begin{proof}  Let $s=s(L_1,L_2)$. Let $f:Z\rightarrow X$ be a resolution of singularities. After replacing $L_i$ with $f^*L_i$ for $i=1,2$, we may assume that $X$ is nonsingular. 



We construct birational morphisms $\psi_m:Y_m\rightarrow X$ with 
numerically effective $\RR$-Cartier divisors $A_{i,m}$ and  effective $\RR$-Cartier divisors $E_{i,m}$  on $Y_m$ such that $A_{i,m}=\psi_m^*(L_i)-E_{i,m}$ and $\langle L_i\rangle =\lim_{m\rightarrow \infty}A_{i,m}$ in $L^{d-1}(\mathcal X)$ for $i=1,2$. We have that $\pi_X(A_{i,m})=\psi_{m,*}(A_{i,m})$ comes arbitrarily closed to $\pi_X(\langle L_j\rangle)=P_{\sigma}(L_j)$ in $L^{d-1}(X)$ by Lemma \ref{Lemma200}.

By (\ref{eqZ2}), $s$ is the largest number such that 
$P_{\sigma}(L_1)-sP_{\sigma}(L_2)$ is pseudo effective (in $N^1(X)$).  
Let $s_m$ be the largest number such that $A_{1,m}-s_mA_{2,m}$ is pseudo effective (in $N^1(Y_m)$). 

We will now show that 
given $\epsilon>0$, there exists a positive integer $m_0$ such that $m>m_0$ implies $s_m<s+\epsilon$. Since ${\rm Psef}(X)$ is closed, there exists $\delta>0$ such that the open ball $B_{\delta}(P_{\sigma}(L_1)-(s+\epsilon)P_{\sigma}(L_2))$ in $N^1(X)$ of radius $\delta$ centered at $P_{\sigma}(L_1)-(s+\epsilon)P_{\sigma}(L_2)$ is disjoint from ${\rm Psef}(X)$.  There exists $m_0$ such that $m\ge m_0$ implies
$\psi_{m*}(A_{1,m})\in B_{\frac{\delta}{2}}(P_{\sigma}(L_1))$ and $\psi_{m*}(A_{2,m})\in B_{\frac{\delta}{(s+\epsilon)2}}(P_{\sigma}(L_2))$. Thus
$\psi_{m*}(A_{1,m}-(s+\epsilon)A_{2,m})\not\in {\rm Psef}(X)$ for $m\ge m_0$ so that $s_m<s+\epsilon$.

By the Khovanski Teissier inequalities for nef and big divisors (\cite[Theorem 2.15]{BFJ} in characteristic zero, \cite[Corollary 6.3]{C}), 
\begin{equation}\label{eq14*}
(A_{1,m}^{d-1}\cdot A_{2,m})^{\frac{d}{d-1}}\ge {\rm vol}(A_{1,m}){\rm vol}(A_{2,m})^{\frac{1}{d-1}}
\end{equation}
for all $m$. By Proposition \ref{Prop35}, taking limits as $m\rightarrow \infty$, we have
\begin{equation}\label{eq20*}
\langle L_1^{d-1}\cdot L_2\rangle \ge {\rm vol}(L_1)^{\frac{d-1}{d}}{\rm vol}(L_2)^{\frac{1}{d}}.
\end{equation}

The Diskant inequality for big and nef divisors, \cite[Theorem 6.9]{C}, \cite[Theorem F]{BFJ} implies 
$$
(A_{1,m}^{d-1}\cdot A_{2,m})^{\frac{d}{d-1}}-{\rm vol}(A_{1,m}){\rm vol}(A_{2,m})^{\frac{1}{d-1}}
\ge ((A_{1,m}^{d-1}\cdot A_{2,m})^{\frac{1}{d-1}}-s_m{\rm vol}(A_{2,m})^{\frac{1}{d-1}})^d.
$$
We have that 
$(A_{1,m}^{d-1}\cdot A_{2,m})^{\frac{1}{d-1}}-s_m{\rm vol}(A_{2,m})^{\frac{1}{d-1}}\ge 0$ since 
$s_m^d\le\frac{{\rm vol}(A_{1,m})}{{\rm vol}(A_{2,m})}$ by Lemma \ref{Lemma22} and by (\ref{eq14*}).

We have that 
$$
\begin{array}{lll}
\left[(A_{1,m}^{d-1}\cdot A_{2,m})^{\frac{d}{d-1}}-{\rm vol}(A_{1,m}){\rm vol}(A_{2,m})^{\frac{1}{d-1}}\right]^{\frac{1}{d}}
&\ge& (A_{1,m}^{d-1}\cdot A_{2,m})^{\frac{1}{d-1}}-s_m{\rm vol}(A_{2,m})^{\frac{1}{d-1}}\\
&\ge & (A_{1,m}^{d-1}\cdot A_{2,m})^{\frac{1}{d-1}}-(s+\epsilon){\rm vol}(A_{2,m})^{\frac{1}{d-1}}\end{array}
$$
for $m\ge m_0$.
Taking the limit as $m\rightarrow\infty$, we have that (\ref{eq16*}) holds.


\end{proof}

\begin{Proposition}\label{Prop13*} 
Suppose that $X$ is a  projective $d$-dimensional variety over a field $k$ of characteristic zero and $L_1,L_2$
 are big  $\RR$-Cartier divisors on $X$. Then 
$$
\langle L_1^{d-1} \cdot L_2\rangle \ge{\rm vol}(L_1)^{\frac{d-1}{d}}{\rm vol}(L_2)^{\frac{1}{d}}.
$$
If equality holds, then $\langle L_1\rangle = s\langle L_2\rangle$ in $L^{d-1}(\mathcal X)$, where $s=s(L_1,L_2)=\left(\frac{{\rm vol}(L_2)}{{\rm vol}(L_1)}\right)^{\frac{1}{d}}$.
\end{Proposition}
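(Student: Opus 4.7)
The plan is to deduce both claims directly from the Diskant inequality (Theorem \ref{PropNew60}) together with Proposition \ref{NewProp1}, both already established in the preceding text.

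The first assertion, $\langle L_1^{d-1}\cdot L_2\rangle \ge {\rm vol}(L_1)^{(d-1)/d}{\rm vol}(L_2)^{1/d}$, is literally the intermediate estimate (\ref{eq20*}) obtained en route to Theorem \ref{PropNew60}: one applies the classical Khovanskii–Teissier inequality for nef and big divisors to the nef approximations $A_{i,m}$ of $\langle L_i\rangle$ coming from Lemma \ref{Lemma200}, and passes to the limit using Proposition \ref{Prop35} and continuity of positive intersection products on the big cone (Lemma \ref{Lemma36}).

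For the equality case, suppose $\langle L_1^{d-1}\cdot L_2\rangle = {\rm vol}(L_1)^{(d-1)/d}{\rm vol}(L_2)^{1/d}$, and set $s = s(L_1,L_2)$. From Proposition \ref{NewProp1} we have $s^d \le \langle L_1^d\rangle/\langle L_2^d\rangle = {\rm vol}(L_1)/{\rm vol}(L_2)$, so
\[
s^{d-1}{\rm vol}(L_2) \le \left(\frac{{\rm vol}(L_1)}{{\rm vol}(L_2)}\right)^{(d-1)/d}{\rm vol}(L_2) = {\rm vol}(L_1)^{(d-1)/d}{\rm vol}(L_2)^{1/d} = \langle L_1^{d-1}\cdot L_2\rangle.
\]
Hence the bracket $\langle L_1^{d-1}\cdot L_2\rangle^{1/(d-1)} - s\,{\rm vol}(L_2)^{1/(d-1)}$ appearing on the right of the Diskant inequality (\ref{eq16*}) is nonnegative, while the left-hand side of (\ref{eq16*}) vanishes by our equality hypothesis. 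The $d$-th power of a nonnegative quantity can then only be $\le 0$ if that quantity itself is zero, forcing $\langle L_1^{d-1}\cdot L_2\rangle = s^{d-1}{\rm vol}(L_2)$. Combining this with the equality $\langle L_1^{d-1}\cdot L_2\rangle = {\rm vol}(L_1)^{(d-1)/d}{\rm vol}(L_2)^{1/d}$ yields $s^d = {\rm vol}(L_1)/{\rm vol}(L_2) = \langle L_1^d\rangle/\langle L_2^d\rangle$. Invoking the equality clause of Proposition \ref{NewProp1} now gives $\langle L_1\rangle = s\langle L_2\rangle$ in $L^{d-1}(\mathcal X)$.

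The main subtlety I expect is controlling the sign of the bracket in (\ref{eq16*})—which matters in odd dimensions $d$, where the inequality $0 \ge [\text{bracket}]^d$ alone would not force the bracket to vanish. The novelty of the argument above is that this sign information comes for free from Proposition \ref{NewProp1}, so no separate monotonicity argument for $\langle L_1^{d-1}\cdot L_2\rangle$ against $s^{d-1}{\rm vol}(L_2)$ is required. Everything else is a short chain of implications between the Diskant inequality and Proposition \ref{NewProp1}.
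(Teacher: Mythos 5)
Your proof is correct and follows exactly the paper's route: the inequality is the intermediate estimate (\ref{eq20*}), and the equality case is deduced by combining the Diskant inequality (\ref{eq16*}) with Proposition~\ref{NewProp1}. The one place you are more careful than the paper is in explicitly establishing that the bracket $\langle L_1^{d-1}\cdot L_2\rangle^{1/(d-1)}-s\,{\rm vol}(L_2)^{1/(d-1)}$ is nonnegative \emph{before} concluding it vanishes; the paper passes directly from ``left side of (\ref{eq16*}) equals zero'' to ``bracket equals zero,'' which is automatic when $d$ is even but needs exactly your observation (from $s^d\le{\rm vol}(L_1)/{\rm vol}(L_2)$ via Proposition~\ref{NewProp1}) when $d$ is odd, so you have filled in a small but genuine step that the paper leaves implicit. (Incidentally, your computation shows $s=({\rm vol}(L_1)/{\rm vol}(L_2))^{1/d}$, matching the paper's own proof and the definition of $s(L_1,L_2)$; the expression $({\rm vol}(L_2)/{\rm vol}(L_1))^{1/d}$ in the statement of Proposition~\ref{Prop13*} is a typographical error.)
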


\begin{proof} The inequality holds by (\ref{eq20*}). Let $s=s(L_1,L_2)$. By (\ref{eq16*}),
if $\langle L_1^{d-1} \cdot L_2\rangle^{\frac{d}{d-1}}={\rm vol}(L_1){\rm vol}(L_2)^{\frac{1}{d-1}}$ then
$\langle L_1^{d-1}\cdot L_2\rangle^{\frac{1}{d-1}}=s{\rm vol}(L_2)^{\frac{1}{d-1}}$, so that $s^d=\frac{{\rm vol}(L_1)}{{\rm vol}(L_2)}$ and thus 
$\langle L_1\rangle =s\langle L_2\rangle$ in $L^{d-1}(\mathcal X)$ by Proposition \ref{NewProp1}.


\end{proof}


Suppose that $X$ is a complete $d$-dimensional algebraic variety over a field $k$ and $D_1$, $D_2$ are pseudo effective $\RR$-Cartier divisors on $X$. We will write
$$
s_i=\langle D_1^i\cdot D_2^{d-i}\rangle\mbox{ for $0\le i\le d$}. 
$$

We have the following generalization of the Khovanskii-Teissier inequalities to positive intersection numbers.

\begin{Theorem} (Minkowski Inequalities)\label{Ineq} Suppose that $X$ is a complete algebraic variety of dimension $d$ over a field $k$ and $D_1$ and $D_2$ are pseudo effective $\RR$-Cartier divisors on $X$. Then
\begin{enumerate}
\item[1)] $s_i^2\ge s_{i+1}s_{i-1}$ for $1\le i\le d-1.$
\item[2)]  $s_is_{d-i}\ge s_0s_d$ for $1\le i\le d-1$.
\item[3)] $s_i^d\ge s_0^{d-i}s_d^i$ for $0\le i\le d$.
\item[4)] $\langle (D_1+D_2)^d\rangle \ge \langle D_1^d\rangle^{\frac{1}{d}}+\langle D_2^d\rangle^{\frac{1}{d}}$.
\end{enumerate}
\end{Theorem}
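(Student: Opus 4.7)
The plan is to reduce the Khovanskii--Teissier inequalities for positive intersection products of pseudo-effective divisors to the classical Khovanskii--Teissier inequalities for nef and big divisors (\cite[Theorem 2.15]{BFJ}, \cite[Corollary 6.3]{C}) via a Fujita-type approximation on blowups, in the spirit of the proof of Proposition \ref{Prop13*}. First I would reduce from the pseudo-effective case to the big case: replacing $D_1, D_2$ by $D_1 + \varepsilon H$ and $D_2 + \varepsilon H$ for $H$ an ample $\RR$-Cartier divisor and letting $\varepsilon \to 0^+$, the definition of the positive intersection product on the pseudo-effective cone together with continuity on the big cone (Lemma \ref{Lemma36}) ensures that each $s_i$ as well as $\langle (D_1+D_2)^d\rangle$ pass to the limit. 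Hence it suffices to treat the case where $D_1$ and $D_2$ are big.

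Now assume $D_1, D_2$ are big. As in the proof of Proposition \ref{Prop13*}, one constructs birational morphisms $\psi_m\colon Y_m \to X$, nef $\RR$-Cartier divisors $A_{i,m}$ and effective $\RR$-Cartier divisors $E_{i,m}$ on $Y_m$ with $A_{i,m} = \psi_m^* D_i - E_{i,m}$ and with $A_{i,m} \to \langle D_i \rangle$ in $L^{d-1}(\mathcal X)$ as $m \to \infty$. The least-upper-bound characterization of positive intersection products in Proposition \ref{Prop35}, combined with monotonicity of nef intersection under enlarging each factor, yields
$$
(A_{1,m}^i \cdot A_{2,m}^{d-i})_{Y_m} \longrightarrow s_i \quad \mbox{for } 0 \le i \le d.
$$
On each $Y_m$ the classes $A_{1,m}, A_{2,m}$ are nef, so the classical Khovanskii--Teissier inequalities give
$$
(A_{1,m}^i \cdot A_{2,m}^{d-i})^2 \;\ge\; (A_{1,m}^{i+1} \cdot A_{2,m}^{d-i-1})(A_{1,m}^{i-1} \cdot A_{2,m}^{d-i+1})
$$
along with the analogues of (2) and (3) on $Y_m$. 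Passing to the limit in $m$ produces (1), (2) and (3) for the $s_i$.

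Finally, iterated super-additivity of the positive intersection product (Lemma \ref{Lemma36}) gives
$$
\langle (D_1+D_2)^d \rangle \;\ge\; \sum_{i=0}^{d} \binom{d}{i}\, s_i,
$$
and inserting the bound $s_i \ge s_0^{(d-i)/d} s_d^{i/d}$ from (3) produces $\sum_{i=0}^d \binom{d}{i} s_0^{(d-i)/d} s_d^{i/d} = (s_0^{1/d} + s_d^{1/d})^d$, which yields (4) upon taking $d$-th roots. The main obstacle I anticipate is the joint-convergence step in the second paragraph: one must justify that for every configuration of exponents $(i, d-i)$, and not only the single configuration $(d-1, 1)$ that appears in Proposition \ref{Prop13*}, the nef intersection numbers on $Y_m$ converge to the corresponding positive intersection numbers on $X$. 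This is secured precisely by the LUB characterization of the positive intersection product in Proposition \ref{Prop35}, so the same limit argument pushes through uniformly across all the inequalities in (1)--(3).
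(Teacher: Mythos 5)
Your proposal is correct in substance but takes a different and more self-contained route than the paper. Where the paper simply cites the inequalities 1)--3) as a known result (Theorem 6.6 of \cite{C}, or Theorem 2.15 of \cite{BFJ} in characteristic zero), you re-derive them from the nef Khovanskii--Teissier inequalities via Fujita approximation on birational models $Y_m$. Since that is essentially the proof strategy of the cited theorem itself, you are re-proving the citation rather than using it. What this buys is transparency: the reader sees explicitly where the nef case enters. What the paper's approach buys is brevity and modularity -- it isolates the hard analytic content (the approximation argument) into a black box that is already established elsewhere. Your derivation of 4) from 3) and super-additivity is a faithful expansion of the paper's one-line remark: iterated super-additivity gives $\langle(D_1+D_2)^d\rangle\ge\sum_i\binom{d}{i}s_i$, and then 3) and the binomial theorem give $(s_0^{1/d}+s_d^{1/d})^d$. (Note that as literally printed, 4) in the statement lacks a $\frac{1}{d}$ exponent on the left-hand side, in apparent contradiction with the Minkowski inequality of Theorem \ref{Theorem22}; you prove the version with the $d$-th root on both sides, which is clearly the intended reading, as confirmed by Theorem \ref{Theorem22+} and Theorem \ref{Theorem22}.)

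The one place where you are selling short the work required is the joint-convergence step. You claim $(A_{1,m}^i\cdot A_{2,m}^{d-i})_{Y_m}\to s_i$ for all $0\le i\le d$ simultaneously, and you justify this by appeal to Proposition \ref{Prop35}. But Proposition \ref{Prop35} only says that $s_i$ is the \emph{least upper bound} of such intersection numbers -- it does not by itself say that the particular Fujita sequence $(A_{1,m},A_{2,m})$ attains the supremum, let alone does so simultaneously for every exponent pair. That requires a directedness/cofinality argument: the set of nef classes $\beta_j\le D_j$ over the tower $I(X)$ forms a directed poset, the mixed intersection functionals are monotone in each argument, and the Fujita sequence is cofinal in this poset. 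This is exactly the non-trivial content of the reference the paper cites. So either you carry that argument out explicitly, or you cite it -- and in the latter case your proof collapses back to the paper's. The sketch is correct, but the acknowledgment of this gap at the end of your proposal underestimates the gap's size: Proposition \ref{Prop35} alone does not close it.
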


\begin{proof} Statements 1) - 3)  follow from the inequality of Theorem 6.6 \cite{C} (\cite[Theorem 2.15]{BFJ} in characteristic zero). Statement 4) follows from 3) and 
 the super additivity of the positive intersection  product.
\end{proof}

When $D_1$ and $D_2$ are nef, the inequalities of Theorem \ref{Ineq} are proven by Khovanskii  and Teissier \cite{T1}, \cite{T2}, \cite[Example 1.6.4]{L}.  In the case that $D_1$ and $D_2$ are nef, we have that $s_i=\langle D_1^i\cdot D_2^{d-i}\rangle=(D_1^i\cdot D_2^{d-i})$ are the ordinary intersection products.

We have the following characterization of equality in these inequalities.

\begin{Theorem} (Minkowski equalities)\label{Minkeq} Suppose that $X$ is a projective algebraic variety of dimension $d$ over a field $k$ of characteristic zero, and $D_1$ and $D_2$ are big $\RR$-Cartier divisors on $X$.  Then the following are equivalent:
\begin{enumerate}
\item[1)] $s_i^2= s_{i+1}s_{i-1}$ for $1\le i\le d-1.$
\item[2)]  $s_is_{d-i}= s_0s_d$ for $1\le i\le d-1$.
\item[3)] $s_i^d= s_0^{d-i}s_d^i$ for $0\le i\le d$.
\item[4)] $s_{d-1}^d=s_0s_d^{d-1}$.
\item[5)] $\langle (D_1+D_2)^d\rangle = \langle D_1^d\rangle^{\frac{1}{d}}+\langle D_2^d\rangle^{\frac{1}{d}}$.
\item[6)] $\langle D_1\rangle$ is proportional to $\langle D_2\rangle$ in $L^{d-1}(\mathcal X)$.
\end{enumerate}
\end{Theorem}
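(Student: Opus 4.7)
The plan is to prove the cycle $3)\Rightarrow 2)\Rightarrow 4)\Rightarrow 6)\Rightarrow 5)\Rightarrow 3)$ together with the independent equivalence $1)\Leftrightarrow 3)$, so that all deep geometric input is isolated in two places: the equality case of Proposition \ref{Prop13*} and the statement of Theorem \ref{Theorem22}. Set $a_i := \log s_i$. By Theorem \ref{Ineq}(1) the sequence $a_i$ is concave on $\{0,\ldots,d\}$, and by Theorem \ref{Ineq}(3) it is bounded below by the chord $b_i := \tfrac{d-i}{d}a_0 + \tfrac{i}{d}a_d$. Condition 3) is exactly $a_i=b_i$ for all $i$, so the geometric content of all the numerical conditions reduces to comparing $a_i$ against $b_i$.

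First I would handle the algebraic implications. For $1)\Leftrightarrow 3)$, note that 1) asserts concavity is tight at every interior node, which forces $a_i$ to be affine, hence $a_i=b_i$, and the converse is immediate. The implications $3)\Rightarrow 2)$ and $3)\Rightarrow 4)$ are one-line substitutions (take $i=d-1$ in 3) for the latter). For $2)\Rightarrow 4)$, I would apply Theorem \ref{Ineq}(3) at $i=1$ and $i=d-1$, whose product gives $s_1s_{d-1}\ge s_0s_d$; the $i=1$ instance of 2) forces equality in the product, which propagates back to each factor and in particular yields 4). The subtle step is $4)\Rightarrow 3)$: assuming $a_{d-1}=b_{d-1}$, concavity applied to the chord joining nodes $i$ and $d$ (for $0\le i\le d-2$) gives $(d-i)a_{d-1}\ge a_i+(d-1-i)a_d$; substituting $a_{d-1}=\tfrac{1}{d}a_0+\tfrac{d-1}{d}a_d$ and simplifying yields the reverse bound $a_i\le b_i$, which combined with $a_i\ge b_i$ forces $a_i=b_i$.

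Next come the geometric steps. For $4)\Rightarrow 6)$, observe that 4) is precisely the equality $\langle D_1^{d-1}\cdot D_2\rangle = \mathrm{vol}(D_1)^{(d-1)/d}\mathrm{vol}(D_2)^{1/d}$, which is the equality case of Proposition \ref{Prop13*}; that proposition then delivers the proportionality $\langle D_1\rangle = s\langle D_2\rangle$ in $L^{d-1}(\mathcal{X})$. For $6)\Rightarrow 5)$ I would simply cite Theorem \ref{Theorem22}. Finally, for $5)\Rightarrow 3)$, I would chain super-additivity of the positive intersection product (Lemma \ref{Lemma36}) with Theorem \ref{Ineq}(3) to obtain
\[
\langle (D_1+D_2)^d\rangle \;\ge\; \sum_{i=0}^d\binom{d}{i}s_i \;\ge\; \sum_{i=0}^d\binom{d}{i}\mathrm{vol}(D_1)^{i/d}\mathrm{vol}(D_2)^{(d-i)/d} \;=\; \bigl(\mathrm{vol}(D_1)^{1/d}+\mathrm{vol}(D_2)^{1/d}\bigr)^d.
\]
Hypothesis 5) forces the outermost expression to equal this common value, and since the binomial coefficients are positive and $s_i\ge\mathrm{vol}(D_1)^{i/d}\mathrm{vol}(D_2)^{(d-i)/d}$, each intermediate inequality must be an equality, yielding 3). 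The main obstacle is bundled into Proposition \ref{Prop13*}, which in turn rests on the Diskant inequality; modulo that, the argument is essentially combinatorial, with the only genuinely delicate point being the choice of secant in $4)\Rightarrow 3)$.
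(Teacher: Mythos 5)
Your proof is correct, and it takes a genuinely different path through the equivalences than the paper does. The paper proves $6)\Rightarrow\{1,\ldots,5\}$ by homogeneity of the positive intersection product (Proposition \ref{Prop35}), then closes the loop with $5)\Rightarrow 6)$ via Theorem \ref{Theorem22}, $4)\Rightarrow 6)$ via Proposition \ref{Prop13*}, $3)\Rightarrow 4)$ by specialization, and the two numerical reductions $2)\Rightarrow 3)$ and $1)\Rightarrow 4)$. You instead run the single cycle $3)\Rightarrow 2)\Rightarrow 4)\Rightarrow 6)\Rightarrow 5)\Rightarrow 3)$ plus $1)\Leftrightarrow 3)$. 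The principal novelty is your $5)\Rightarrow 3)$: rather than deducing $6)$ first and then using homogeneity, you expand $\langle(D_1+D_2)^d\rangle$ by super-additivity into $\sum\binom{d}{i}s_i$, bound each $s_i$ below by Theorem \ref{Ineq}(3), sum into $(\mathrm{vol}(D_1)^{1/d}+\mathrm{vol}(D_2)^{1/d})^d$, and let equality in $5)$ squeeze every term. This keeps the implication purely numerical and avoids a second invocation of the geometric machinery. Your $4)\Rightarrow 3)$ secant argument (applying log-concavity of $s_i$ to the chord through nodes $i,d-1,d$ and substituting the affine value of $a_{d-1}$ to flip the inequality $a_i\ge b_i$ into $a_i\le b_i$) is a clean algebraic observation that the paper routes through geometry as $4)\Rightarrow 6)\Rightarrow 3)$; it is redundant for your chosen cycle but worth keeping as it shows $4)$ and $3)$ are equivalent on the level of log-concave sequences alone. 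Both approaches concentrate the geometric content in the same two places — Proposition \ref{Prop13*} for $4)\Rightarrow 6)$ and Theorem \ref{Theorem22} for the $5)/6)$ link (you use the ``if'' direction where the paper uses the ``only if'') — so the depth of the proof is unchanged; the difference is purely organizational, and your arrangement isolates the numerical combinatorics more cleanly. One minor remark: condition $5)$ as printed in the paper is missing the exponent $\tfrac{1}{d}$ on the left-hand side (it should read $\langle(D_1+D_2)^d\rangle^{1/d}$, consistent with Theorem \ref{Theorem22+} and Theorem \ref{Ineq}), and you have correctly read it in that intended form.
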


When $D_1$ and $D_2$ are nef and big, then Theorem \ref{Minkeq} is proven  in \cite[Theorem 2.15]{BFJ} when $k$ has characteristic zero and in  \cite[Theorem 6.13]{C} for arbitrary $k$. When $D_1$ and $D_2$ are nef and big, the condition
6) of Theorem \ref{Minkeq} is just that $D_1$ and $D_2$ are proportional in $N^1(X)$.

\begin{proof}
All the numbers $s_i$ are positive by Remark \ref{Remark50}.
Proposition \ref{Prop35} shows that 6) implies 1), 2), 3),  4) and 5).
 Theorem \ref{Theorem22} shows that 5) implies 6). 
Proposition \ref{Prop13*} shows that 4) implies 6). Since the condition of 3) is a subcase of the condition 4), we have that 3) implies 6).

Suppose that 2) holds. By the inequality 3) of Theorem \ref{Ineq} and the  equality 2), we have that
$$
s_i^ds_{d-i}^d\ge (s_0^{d-i}s_d^i)(s_0^is_d^{d-i})=(s_0s_d)^d=(s_is_{d-i})^d.
$$
Thus the inequalities 3) hold.

Suppose that the inequalities 1) hold. Then
$$
\frac{s_{d-1}}{s_0}=\frac{s_{d-1}}{s_{d-2}}\frac{s_{d-2}}{s_{d-3}}\cdots\frac{s_1}{s_0}=\left(\frac{s_d}{s_{d-1}}\right)^{d-1}
$$
so that 4) holds.
\end{proof}

\begin{Remark} The existence of resolutions of singularities is the only place where characteristic zero is used in the proof of Theorem \ref{Minkeq}. Thus the conclusions of Theorem \ref{Minkeq} are valid over an arbitrary field for varieties of dimension $d\le3$ by \cite{Ab2}, \cite{CP}.
\end{Remark}
Generalizing Teissier \cite{T1}, we 
 define the inradius of $\alpha$ with respect to $\beta$ as
$$
r(\alpha;\beta)=s(\alpha,\beta)
$$
and the outradius of $\alpha$ with respect to $\beta$ as
$$
R(\alpha;\beta)=\frac{1}{s(\beta,\alpha)}.
$$

\begin{Theorem}\label{TheoremG} Suppose that $X$ is a $d$-dimensional projective variety over a field $k$ of characteristic zero and $\alpha,\beta$ are big $\RR$-Cartier divisors on $X$.  Then
\begin{equation}\label{eq106}
\frac{s_{d-1}^{\frac{1}{d-1}}-(s_{d-1}^{\frac{d}{d-1}}-s_0^{\frac{1}{d-1}}s_d)^{\frac{1}{d}}}{s_0^{\frac{1}{d-1}}}
\le r(\alpha;\beta)\le \frac{s_d}{s_{d-1}}.
\end{equation}
\end{Theorem}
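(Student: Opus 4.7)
The approach is to establish the upper bound $r(\alpha;\beta)\le s_d/s_{d-1}$ directly via Fujita approximation, and then deduce the lower bound from the Diskant inequality (Theorem~\ref{PropNew60+}).

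For the upper bound, I would reuse the construction from the proof of Proposition~\ref{Prop13*}: birational morphisms $\psi_m\colon Y_m\to X$ with $Y_m$ nonsingular (by resolution of singularities), nef big $\RR$-Cartier divisors $A_{i,m}$ on $Y_m$, and effective $E_{i,m}$ with $\psi_m^*L_i=A_{i,m}+E_{i,m}$ (where $L_1=\alpha$, $L_2=\beta$), satisfying $A_{i,m}\to\langle L_i\rangle$ in $L^{d-1}(\mathcal X)$. The subadditivity of $\sigma_\Gamma$ applied to $\psi_m^*L_i=A_{i,m}+(\psi_m^*L_i-A_{i,m})$, together with $\sigma_\Gamma(A_{i,m})=0$ (since $A_{i,m}$ is nef), gives $N_\sigma(\psi_m^*L_i)\le N_\sigma(\psi_m^*L_i-A_{i,m})$ and hence $A_{i,m}\le P_\sigma(\psi_m^*L_i)$. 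By Lemma~\ref{Lemma200} and the hypothesis $\langle\alpha\rangle\ge r\langle\beta\rangle$ in $L^{d-1}(\mathcal X)$ (with $r=r(\alpha;\beta)$), the class $P_\sigma(\psi_m^*\alpha)-rP_\sigma(\psi_m^*\beta)$ lies in $\mathrm{Psef}(N^1(Y_m))$. Intersecting with the nef $A_{1,m}^{d-1}$ yields
\[
A_{1,m}^{d-1}\cdot P_\sigma(\psi_m^*\alpha)\ge r\,A_{1,m}^{d-1}\cdot P_\sigma(\psi_m^*\beta).
\]
The left side satisfies $A_{1,m}^d\le A_{1,m}^{d-1}\cdot P_\sigma(\psi_m^*\alpha)\le A_{1,m}^{d-1}\cdot\psi_m^*\alpha=A_{1,m}^d+A_{1,m}^{d-1}\cdot E_{1,m}$, and both sandwich bounds tend to $s_d$: $A_{1,m}^d\to s_d$ by Fujita approximation, and $A_{1,m}^{d-1}\cdot\psi_m^*\alpha\to\langle\alpha^{d-1}\rangle\cdot\alpha=\langle\alpha^d\rangle=s_d$ by~(\ref{eq40}), so $A_{1,m}^{d-1}\cdot E_{1,m}\to 0$. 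The right side satisfies $A_{1,m}^{d-1}\cdot P_\sigma(\psi_m^*\beta)\ge A_{1,m}^{d-1}\cdot A_{2,m}\to s_{d-1}$, using $A_{2,m}\le P_\sigma(\psi_m^*\beta)$ and the convergence $A_{1,m}^{d-1}\cdot A_{2,m}\to s_{d-1}$ from the proof of Proposition~\ref{Prop13*}. Dividing and letting $m\to\infty$ gives $r(\alpha;\beta)\le s_d/s_{d-1}$.

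For the lower bound, the Khovanskii--Teissier inequality (Theorem~\ref{Ineq+}, part~3, with $i=d-1$) gives $s_{d-1}^d\ge s_0 s_d^{d-1}$, so $s_{d-1}^{d/(d-1)}-s_0^{1/(d-1)}s_d\ge 0$ and its $d$-th root is real. Combining the upper bound just established with the log-concavity of the $s_i$ (Theorem~\ref{Ineq+}, part~1) gives $r(\alpha;\beta)\le s_d/s_{d-1}\le (s_{d-1}/s_0)^{1/(d-1)}$, so $s_{d-1}^{1/(d-1)}-r(\alpha;\beta)s_0^{1/(d-1)}\ge 0$. Taking $d$-th roots in the Diskant inequality
\[
\bigl[s_{d-1}^{1/(d-1)}-r(\alpha;\beta)s_0^{1/(d-1)}\bigr]^d\le s_{d-1}^{d/(d-1)}-s_0^{1/(d-1)}s_d
\]
and rearranging produces the lower bound in~(\ref{eq106}).

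The main obstacle is the upper bound. Even on a smooth surface (e.g.\ take $\alpha=H$ ample and $\beta=H+N$ with $N$ a reduced curve of very negative self-intersection, so that $P_\sigma(\alpha)=P_\sigma(\beta)=H$ and $r=1$, yet $\alpha-r\beta=-N\notin\mathrm{Psef}$), the inequality $\langle\alpha\rangle\ge r\langle\beta\rangle$ in $L^{d-1}(\mathcal X)$ is strictly weaker than pseudo-effectivity of $\alpha-r\beta$ in $N^1(X)$, so one cannot simply intersect $\alpha-r\beta$ with the nef class $\langle\alpha^{d-1}\rangle$. Descending to the movable parts $P_\sigma(\psi_m^*\alpha),\,P_\sigma(\psi_m^*\beta)$ on Fujita models restores pseudo-effectivity and makes the nef-intersection argument applicable.
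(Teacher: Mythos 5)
Your proof is correct. The lower bound is obtained exactly as in the paper: you verify $s_{d-1}^{1/(d-1)} - r(\alpha;\beta)\,s_0^{1/(d-1)}\ge 0$ (the paper's inequality (\ref{eq110})) by combining the upper bound with the Khovanskii--Teissier estimate $s_{d-1}^d\ge s_0s_d^{d-1}$, and then feed this into the Diskant inequality of Theorem~\ref{PropNew60+}. For the upper bound the paper is considerably terser: it deduces $\langle\alpha^d\rangle\ge s\langle\alpha^{d-1}\cdot\beta\rangle$ directly from $\langle\alpha\rangle\ge s\langle\beta\rangle$ in $L^{d-1}(\mathcal X)$, citing Lemma~\ref{Lemma36}; unpacked, this amounts to observing that for nef $\gamma_1,\dots,\gamma_{d-1}\le\alpha$ and nef $\delta\le\beta$ one has $s\,\gamma_1\cdots\gamma_{d-1}\cdot\delta\le s\,\gamma_1\cdots\gamma_{d-1}\cdot\langle\beta\rangle\le\gamma_1\cdots\gamma_{d-1}\cdot\langle\alpha\rangle\le\langle\alpha^d\rangle$, and taking the supremum over $\gamma_i,\delta$ via Proposition~\ref{Prop35}. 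Your argument descends instead to the Fujita models $Y_m$, uses Lemma~\ref{Lemma200} and (\ref{eqZ2}) to turn $\langle\alpha\rangle-r\langle\beta\rangle\ge 0$ into pseudo-effectivity of $P_\sigma(\psi_m^*\alpha)-rP_\sigma(\psi_m^*\beta)$ in $N^1(Y_m)$, intersects with the nef power $A_{1,m}^{d-1}$, and passes to the limit. This is a legitimate and more concrete route --- essentially the same pattern the paper itself uses in the proofs of Proposition~\ref{Prop13} and Theorem~\ref{PropNew60} --- and you correctly identify why the na\"\i ve intersection of $\alpha-r\beta$ with a nef class fails. One small caveat on your motivating surface example: with $H$ ample one has $H\cdot N>0$, so the Zariski decomposition of $H+N$ never has positive part exactly $H$; the phenomenon you want is cleanest with $H$ big and nef and $H\cdot N=0$ (e.g.\ $H$ the pullback of $\mathcal O(1)$ on the blow-up of $\PP^2$ at a point and $N$ the exceptional curve). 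This does not affect the proof itself.
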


\begin{proof} Let $s=s(\alpha,\beta)=r(\alpha,\beta)$. Since $\langle \alpha\rangle \ge s\langle \beta\rangle$, we have that $\langle \alpha^d\rangle\ge s\langle \beta\cdot\alpha^{d-1}\rangle$ by Lemma \ref{Lemma36}. This gives us the upper bound. We also have that
\begin{equation}\label{eq110} 
\langle \alpha^{d-1}\cdot\beta\rangle^{\frac{1}{d-1}}-s\langle \beta^d\rangle^{\frac{1}{d-1}}\ge 0.
\end{equation}
 We  obtain the lower bound from Theorem \ref{PropNew60} (using  the inequality $s_{d-1}^d\ge s_0s_d^{d-1}$ to ensure that the bound is a positive real number).
\end{proof}

\begin{Theorem}\label{TheoremH} Suppose that $X$ is a $d$-dimensional projective variety over a field $k$ of characteristic zero and $\alpha,\beta$ are big $\RR$-Cartier divisors on $X$. Then
\begin{equation}\label{eq107}
\frac{s_{d-1}^{\frac{1}{d-1}}-(s_{d-1}^{\frac{d}{d-1}}-s_0^{\frac{1}{d-1}}s_d)^{\frac{1}{d}}}{s_0^{\frac{1}{d-1}}}
\le r(\alpha;\beta)\le \frac{s_d}{s_{d-1}}\le\frac{s_1}{s_0}\le R(\alpha;\beta)\le 
\frac{s_d^{\frac{1}{d-1}}}{s_1^{\frac{1}{d-1}}-(s_1^{\frac{d}{d-1}}-s_d^{\frac{1}{d-1}}s_0)^{\frac{1}{d}}}.
\end{equation}
\end{Theorem}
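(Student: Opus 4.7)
The plan is to obtain Theorem~\ref{TheoremH} by combining three ingredients that are already in place in the paper: Theorem~\ref{TheoremG} applied twice (once with the given ordering and once with the roles of $\alpha$ and $\beta$ swapped), together with the Khovanskii--Teissier log-concavity inequalities of Theorem~\ref{Ineq+}. No new intersection-theoretic work should be needed.

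The first two inequalities
\[
\frac{s_{d-1}^{\frac{1}{d-1}}-(s_{d-1}^{\frac{d}{d-1}}-s_0^{\frac{1}{d-1}}s_d)^{\frac{1}{d}}}{s_0^{\frac{1}{d-1}}}\le r(\alpha;\beta)\le \frac{s_d}{s_{d-1}}
\]
are exactly the content of Theorem~\ref{TheoremG}, so they are simply quoted. The central inequality $\frac{s_d}{s_{d-1}}\le \frac{s_1}{s_0}$ follows from Theorem~\ref{Ineq+}(1): the inequalities $s_i^2\ge s_{i-1}s_{i+1}$ for $1\le i\le d-1$ are equivalent to the ratio sequence $s_i/s_{i-1}$ being non-increasing in $i$, so in particular $\frac{s_d}{s_{d-1}}\le\frac{s_{d-1}}{s_{d-2}}\le\cdots\le\frac{s_1}{s_0}$. (All $s_i$ are strictly positive by Remark~\ref{Remark50}, so the ratios are defined.)

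For the last two inequalities, the trick is to apply Theorem~\ref{TheoremG} to the pair $(\beta,\alpha)$ in place of $(\alpha,\beta)$. Writing $s_i'=\langle \beta^i\cdot \alpha^{d-i}\rangle$, one has $s_i'=s_{d-i}$, so Theorem~\ref{TheoremG} becomes
\[
\frac{s_1^{\frac{1}{d-1}}-(s_1^{\frac{d}{d-1}}-s_d^{\frac{1}{d-1}}s_0)^{\frac{1}{d}}}{s_d^{\frac{1}{d-1}}}
\le r(\beta;\alpha)\le \frac{s_0}{s_1}.
\]
By definition $R(\alpha;\beta)=1/s(\beta,\alpha)=1/r(\beta;\alpha)$, so taking reciprocals of the displayed chain (and reversing the inequalities) yields exactly the two rightmost inequalities of the theorem. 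Before inverting, one checks that $r(\beta;\alpha)>0$, which holds because $\alpha$ and $\beta$ are both big so some positive multiple of $\alpha$ dominates $\langle\beta\rangle$; and one verifies that the quantity under the $d$-th root is nonnegative, namely $s_1^{d/(d-1)}\ge s_d^{1/(d-1)}s_0$, which is just Theorem~\ref{Ineq+}(3) (the case $i=1$, $s_1^d\ge s_0^{d-1}s_d$) after taking $(d-1)$-st roots.

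The only real obstacle I foresee is bookkeeping: making sure the index relabeling $s_i'=s_{d-i}$ is applied correctly in every occurrence inside the bound, and confirming that the positivity conditions needed to take $d$-th roots in Theorem~\ref{TheoremG} and to invert $r(\beta;\alpha)$ all follow from Theorem~\ref{Ineq+}(3) and Remark~\ref{Remark50}. Aside from that, the argument is mechanical, and the deep inputs (the Diskant inequality behind Theorem~\ref{TheoremG} and the Khovanskii--Teissier inequalities of Theorem~\ref{Ineq+}) have already been established upstream.
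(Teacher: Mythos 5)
Your proof is correct and takes essentially the same route as the paper: the first two inequalities are Theorem~\ref{TheoremG} verbatim, the last two are Theorem~\ref{TheoremG} applied to the pair $(\beta,\alpha)$ (with the index relabeling $s_i'=s_{d-i}$) followed by the reciprocal $R(\alpha;\beta)=1/s(\beta,\alpha)$, and the middle inequality $\frac{s_d}{s_{d-1}}\le\frac{s_1}{s_0}$ comes from the log-concavity chain in Theorem~\ref{Ineq+}. The paper's proof is terser but does exactly this; your additional checks on strict positivity of the $s_i$ and on nonnegativity under the $d$-th root are correct and worth having explicit.
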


\begin{proof} By Theorem \ref{TheoremG}, we have that
$$
\frac{s_1^{\frac{1}{d-1}}-(s_1^{\frac{d}{d-1}}-s_d^{\frac{1}{d-1}}s_0)^{\frac{1}{d}}}{s_d^{\frac{1}{d-1}}}
\le s(\beta,\alpha)\le\frac{s_0}{s_1}.
$$
The theorem now follows from the fact that $R(\alpha,\beta)=\frac{1}{s(\beta,\alpha)}$ and Theorem \ref{Ineq}.
\end{proof}

This gives a solution to \cite[Problem B]{T1} for big $\RR$-Cartier divisors. The inequalities of Theorem \ref{TheoremH} are proven by Teissier in \cite[Corollary 3.2.1]{T1} for divisors on surfaces satisfying some conditions. 
 In the case that $D_1$ and $D_2$ are nef and big on a projective variety over a field of characteristic zero, Theorem \ref{TheoremH} follows from the Diskant inequality \cite[Theorem F]{BFJ}. In the case that $D_1$ and $D_2$ are nef and big on a projective variety over an arbitrary field, Theorem \ref{TheoremH} is proven in \cite[Theorem 6.11]{C}, as a consequence of the Diskant inequality \cite[Theorem 6.9]{C} for nef divisors.



\begin{thebibliography}{1000000000}
\bibitem{Ab} S. Abhyankar, Local uniformization on algebraic surfaces over ground fields of characteristic $p\ne 0$, Annals of Math. 63 (1956), 491 - 526.
\bibitem{Ab2} S.  Abhyankar, Resolution of singularities of embedded algebraic surfaces, second edition, Springer Verlag, New York, Berlin, Heidelberg, 1998.
\bibitem{At} M.F. Atiyah and I.G. Macdonald, Introduction to Commutative Algebra, Addison Wesley, Reading Massachusetts, 1969.
\bibitem{BFJ} S. Boucksom, C. Favre and M. Jonnson, Differentiability of volumes of divisors and a problem of Teissier, J. Algebraic Geom. 18 (2009), 279 - 308.
\bibitem{CJS} V. Cossart, U. Jannsen, S. Saito, Desingularization: Invariants and strategy, Lecture Notes in Mathematics 2270, Springer, 2020.
\bibitem{CP} V. Cossart, O. Piltant, Resolution of singularities of arithmetical threefolds, I and II, Journal of algebra 529 (2019), 268 - 535.
\bibitem{C1} S.D. Cutkosky, Asymptotic multiplicities of graded families of ideals and linear series,  Advances in Mathematics 264 (2014), 55 - 113.
\bibitem{C3} S.D. Cutkosky, Asymptotic Multiplicities, Journal of Algebra 442 (2015), 260 - 298.
\bibitem{C} S.D. Cutkosky, Teissier's  problem on inequalities of nef divisors over an arbitrary field, J. Algebra Appl. 14 (2015).
\bibitem{AG} S.D. Cutkosky, Introduction to algebraic geometry, Graduate Studies in Mathematics, 188. American Mathematical Society, Providence RI, 2018.
\bibitem{C2} S.D. Cutkosky, Mixed multiplicities of divisorial filtrations, Advances in Math. 358 (2019). 
\bibitem{dJ} A.J. de Jong, Smoothness, semistability and alterations, Publ. Math. IHES 83 (1996), 51 - 93.
\bibitem{DF} Nguyen-Bac Dang and C. Favre, Intersection theory of nef b-divisor classes, arXiv:2007.04549.
\bibitem{ELM} L. Ein, R. Lazarsfeld, M.  Musta\c{t}\u{a}, M. Nakamaye and M. Popa, Asymptotic invariants of base loci, Ann. Inst. Fourier 56 (2006), 1701 - 1734.
\bibitem{FKL} M. Fulger, J. Koll\'ar and B. Lehmann, Volume and Hilbert function of $\RR$-divisors, Michigan Math. J. 65 (2016), 371 - 387.
\bibitem{FL1} M. Fulger and B. Lehmann, Zariski decompositions of numerical cycle classes, J. Algebraic Geom. 26 (2017), 43 - 106.
\bibitem{F} W. Fulton, Intersection Theory, Springer Verlag, Berlin, Heidelberg, 1984.
\bibitem{F2} T. Fujita, Approximating Zariski decomposition of big line bundles, Kodai Math. J. 17 (1994), 1-3.
\bibitem{H} R. Hartshorne, Algebraic Geometry, Graduate Texts in Mathematics, 52, Springer-Verlag, New York-Heidelberg, 1077.
\bibitem{Kl} S. Kleiman, A numerical criterion for ampleness, Ann. of Math. 84 (1966), 293 - 344.
\bibitem{Kl2} S. Kleiman, Expose XIII, Les Th\'eor\`emes de Finitude pour le Functeur de Picard, in Th\'eorie des Intersections et Th\'eor\`eme de Riemann-Roch, SGA 6, XIII.4.6.
\bibitem{L} R. Lazarsfeld, Positivity in Algebraic Geometry, Vols I and II, Springer Verlag, Berlin, Heidelberg, 2004
\bibitem{LM} R. Lazarsfeld and M. Musta\c{t}\u{a}, Convex bodies associated to linear series, Ann. Sci. Ec. Norm. Super 42 (2009) 783 - 835.
\bibitem{LX1} B. Lehmann and J. Xiao, Convexity and Zariski decomposition structure, Geom. Funct. Anal. 26 (2016), 1135 - 1189.
\bibitem{LX2} B. Lehmann and J. Xiao, Positivity functions for curves on algebraic varieties, Algebra Number Theory 13 (2019), 1243 - 1279.
\bibitem{Li} J. Lipman, Desingularization of 2-dimensional schemes, Annals of Math. 107 (1978), 115 - 207.
\bibitem{M} T. Matsusaka, The criteria for algebraic equivalence and the torsion group, Amer. J. Math. 79 (1957), 52 - 66.
\bibitem{M1} T. Matsusaka, The theorem of Bertini on linear systems, Mem. Coll. Sci. Univ. Kyoto 26 (1951), 51 - 62.

\bibitem{N} N. Nakayama, Zariski-decomposition and abundance, MSJ Memoirs, vol 14. Math. Soc. Japan, Tokyo, 2004.

\bibitem{Sn} E. Snapper, Polynomials associated with divisors, J. Math. and Mech. 9 (1960), 123 - 129.

\bibitem{T} S. Takagi, Fujita's approximation theorem in positive characteristics, J. Math. Kyoto Univ. 47 (2007), 179 - 202.

\bibitem{T1} B. Teissier, Bonnesen-type inequalities in algebraic geometry, I. Introduction to the problem, in Seminar on Differential Geometry, 85 - 105, Ann. Math. Studies 102, Princeton Univ. Press, 1982.

\bibitem{T2} B. Teissier, Du th\'eor\`eme de l'index de Hodge aux in\'egalites isop\'erim\'etriques, C.R. Acad. Sci. Paris S\'er A-B 288 (1979), A287 - A289.

\bibitem{Z} O. Zariski, Introduction to the problem of minimal models in the theory of algebraic surfaces, The Mathematical Society of Japan, 1958.

\end{thebibliography}
\end{document}